\newcommand{\Z}{\mathbb{Z}}                     
\newcommand{\R}{\mathbb{R}}                     
\newcommand{\C}{\mathbb{C}}                     
\newcommand{\T}{\mathbb{T}}                     
\newcommand{\CP}{\mathbb{CP}}                   
\newcommand{\RP}{\mathbb{RP}}                   
\newcommand{\di}{\mathrm{d}}                     
\newtheorem{mainthm}{\sc Theorem}           
\newtheorem{maincor}{\sc Corollary}           
\newtheorem{mainprop}{\sc Proposition}           
\newtheorem{thm}{\sc Theorem}[section]               
\newtheorem*{thm*}{\sc Theorem}               
\newtheorem{cor}[thm]{\sc Corollary}        
\newtheorem*{cor*}{\sc Corollary}        
\newtheorem{lem}[thm]{\sc Lemma}            
\newtheorem{prop}[thm]{\sc Proposition}     
\newtheorem{rem}[thm]{\sc Remark}           
\begin{document}

\title{On the local systolic optimality of Zoll contact forms}

\author{Alberto Abbondandolo\footnote{Ruhr-Universit\"at Bochum, Fakult\"at f\"ur Mathematik, Universit\"atsstrasse 150, 44801 Bochum, Germany} {}\ and Gabriele Benedetti\footnote{Vrije Universiteit Amsterdam, Department of Mathematics, De Boelelaan 1111, 1081 HV Amsterdam, The Netherlands}}
\date{}

\maketitle

\begin{abstract}
We prove a normal form for contact forms close to a Zoll one and deduce that Zoll contact forms on any closed manifold are local maximizers of the systolic ratio. Corollaries of this result are: (i) sharp local systolic inequalities for Riemannian and Finsler metrics close to Zoll ones, (ii) the perturbative case of a conjecture of Viterbo on the symplectic capacity of convex bodies, (iii) a generalization of Gromov's non-squeezing theorem in the intermediate dimensions for symplectomorphisms that are close to linear ones.
\end{abstract}

\section*{Introduction}

\subsection*{Metric systolic geometry} 

A classical problem in Riemannian geometry consists in bounding from above the length of the shortest closed geodesic on a closed Riemannian manifold $(W,g)$ by the volume of the  manifold. In other terms, one asks if the systolic ratio of $(W,g)$, i.e.\ the scaling invariant quantity
\[
\rho_{\mathrm{sys}}(W,g) := \frac{\ell_{\min}(g)^n}{\mathrm{vol}(W,g)},
\]
where $n=\dim W$ and $\ell_{\min}(g)$ denotes the length of the shortest closed geodesic on $(W,g)$, is bounded from above on the space of all Riemannian metrics on $W$. The first investigations on this problem go back to Loewner, who in a course given at Syracuse University in 1949 proved that the systolic ratio of the two-torus is maximized by the flat torus that is obtained as the quotient of $\R^2$ by a lattice generated by two sides of an equilateral triangle (see \cite[Section 7.2.1.1]{ber03} for two different proofs of Loewner's result). Shortly afterwards, Pu \cite{pu52} showed that the systolic ratio of the projective plane is maximized by the round metric. A very general result, still in the framework of non-simply-connected manifolds, for which one can obtain closed geodesics by minimizing the length of non-contractible closed curves, was obtained by Gromov \cite{gro83}: The systolic ratio of any essential manifold is bounded from above by a constant depending only on the dimension. Here, a closed manifold $W$ is called essential if its fundamental class is non-zero in the  Eilenberg--MacLane space $K(\pi_1(W), 1)$ of its fundamental group.

The first result about simply connected manifolds is due to Croke \cite{cro88}, who showed that the systolic ratio of the two-sphere is bounded from above. Interestingly, the round metric does not maximize $\rho_{\mathrm{sys}}(S^2,\cdot)$, whose supremum is currently  unknown, but it is a local maximizer. More generally, all Zoll metrics on $S^2$, i.e.\ metrics all of whose geodesics are closed and have the same length, are local maximizers of the systolic ratio (see \cite{abhs17} for the local maximality of Zoll metrics among suitably pinched metrics on $S^2$ and \cite{abhs18} for the case of an arbitrary Zoll metric on $S^2$). The question whether the systolic ratio of a simply connected manifold of dimension at least three is bounded from above is open, even for spheres. Equally open is the boundedness of the systolic ratio of non-simply-connected non-essential manifolds, such as for instance $S^2 \times S^1$: The minimal length of a non-contractible closed curve can be arbitrarily large on any non-essential manifold of unit volume, see \cite{bab93,bru08}, but this does not exclude the existence of short contractible closed geodesics.

Consider now a Finsler metric on the closed $n$-dimensional manifold $W$, i.e.\ a positively 1-homogeneous function $F: TW \rightarrow [0,+\infty)$ that is smooth and positive outside of the zero section and such that the second fiberwise differential of $F^2$ is positive definite outside of the zero section. The systolic ratio of $(W,F)$ is the quantity
\[
\rho_{\mathrm{sys}}(W,F) := \frac{\ell_{\min}(F)^n}{\mathrm{vol}(W,F)},
\]
where $\ell_{\min}(F)$ denotes the length of the shortest closed geodesic on $(W,F)$ and $\mathrm{vol}(W,F)$ is the Holmes--Thompson volume of $(W,F)$, which we normalize so that it coincides with the usual Riemannian volume when $F=\sqrt{g}$ is Riemannian. Several other notions of volume, such as the Busemann--Hausdorff volume, can be defined on a Finsler manifold, which yield corresponding systolic ratios and reduce to the Riemannian volume when $F=\sqrt{g}$, see e.g.\ \cite{apt04}.  As we will see, the Holmes--Thompson volume is the natural one when generalizing to Reeb flows.

Both Gromov's and Croke's results about the boundedness of the systolic ratio in the Riemannian setting extend to the Finsler setting. Indeed, bounds on the Riemannian systolic ratio imply bounds on the Finsler one by a combined use of Loewner ellipsoids and the Rogers--Shephard inequality in convex geometry, see \cite{apbt16}. 

\subsection*{Contact systolic geometry}

In \cite{apb14}, \'Alvarez-Paiva and Balacheff proposed to extend questions from metric systolic geometry to the broader setting of contact geometry and Reeb dynamics, in which one can take advantage of a larger symmetry group. We recall that a co-oriented contact structure $\xi$ on the closed $(2n-1)$-dimensional manifold $M$ is a maximally non-integrable, co-oriented hyperplane distribution $\xi\subset TM$. We call any one-form $\alpha$ on $M$ such that $\xi=\ker\alpha$ a contact form supported by the contact structure $\xi$. In this case, the top-degree form $\alpha\wedge \di\alpha^{n-1}$ is nowhere vanishing. Therefore, $\alpha\wedge \di\alpha^{n-1}$ is a volume form on $M$, and the volume of $M$ with respect to it is denoted by
\[
\mathrm{vol}(M,\alpha) := \int_M \alpha\wedge \di \alpha^{n-1}.
\]
Moreover, the contact form $\alpha$ induces the Reeb vector field $R_{\alpha}$ on $M$, which is defined by the conditions
\[
\imath_{R_{\alpha}} \di\alpha = 0, \qquad \imath_{R_{\alpha}} \alpha = 1.
\]
It is then natural to define the systolic ratio of $(M,\alpha)$ as
\[
\rho_{\mathrm{sys}}(M,\alpha) :=  \frac{T_{\min}(\alpha)^n}{\mathrm{vol}(M,\alpha)}\in(0,+\infty],
\]
where $T_{\min}(\alpha)$ denotes the minimum of the periods of all closed orbits of $R_{\alpha}$. Here, $T_{\min}(\alpha)$ is defined to be $+\infty$ if $R_{\alpha}$ does not have any closed orbit. Note, however, that the Weinstein conjecture, which has been confirmed for many contact manifolds, asserts that any Reeb vector field on a closed manifold has closed orbits, so $\rho_{\mathrm{sys}}(M,\alpha)$ is expected to be always a finite number.

An important source of examples is given by starshaped hypersurfaces in the cotangent bundle $T^*W$ of any closed $n$-dimensional manifold $W$. Here, a hypersurface $M\subset T^*W$ is said to be starshaped if every ray in each cotangent fiber emanating from the origin meets $M$ transversally at exactly one point, and we take as contact form on $M$ the restriction of the Liouville form $p\, \di q$ of the cotangent bundle $T^*W$. If such a hypersurface is fiberwise strictly convex, then it can be seen as the unit cotangent sphere bundle $S^*_FW$ of a Finsler metric on $W$. Moreover, the Reeb flow of the associated contact form $\alpha_F$ is precisely the geodesic flow of $F$. In particular, $T_{\min}(\alpha_F)$ coincides with $\ell_{\min}(F)$ and the two volumes are related by the identity
\[
\mathrm{vol}(S^*_F W,\alpha_F) = n!\, \omega_n \,\mathrm{vol}(W,F),
\]
where $\omega_n$ denotes the volume of the Euclidean $n$-ball. Therefore, the Finsler systolic ratio of $(W,F)$ coincides up to a multiplicative constant with the contact systolic ratio:
\[
\rho_{\mathrm{sys}}(S^*_FW,\alpha_F) = \frac{1}{n!\, \omega_n} \rho_{\mathrm{sys}}(W,F).
\]
While in the metric case one considers the systolic ratio on $W$ as a function of the metric $F$, in the contact case it is natural to study the systolic ratio on $(M,\xi)$ as a function of the contact form $\alpha$ supported by $\xi$. This is indeed an interesting problem, as the space of such contact forms is infinite dimensional, being parametrised by positive smooth functions $f$ on $M$ via $f\mapsto\alpha=f\alpha_*$, where $\alpha_*$ is a fixed contact form. At the same time, the dynamics of $R_{f \alpha_*}$ is highly dependent on the positive function $f$, and the class of Reeb flows of contact forms supported by a given contact structure is extremely rich: For instance, all Reeb flows on a starshaped hypersurface $M\subset T^*W$ can be seen as Reeb flows on the same contact manifold $(S^*W,\xi)$, where $S^*W$ denotes the abstract unit cotangent bundle of $W$.

In investigating the systolic ratio on the space of contact forms supporting $\xi$, we distinguish between \textit{global} and \textit{local} properties. As far as \textit{global} properties are concerned, \'Alvarez-Paiva and Balacheff asked whether the systolic ratio is bounded from above. This question was given a negative answer: Any closed contact manifold $(M,\xi)$ admits contact forms of arbitrarily large systolic ratio. This was first proven for the tight three-sphere in \cite{abhs18}, for arbitrary contact three-manifolds in \cite{abhs19}, and in full generality in \cite{sag21}. In particular, without the convexity assumption a starshaped hypersurface in $T^* W$ can have an arbitrarily high systolic ratio, for every closed manifold $W$.

As far as \textit{local} properties are concerned, a special role is played by Zoll contact forms, that is, contact forms such that all Reeb orbits are closed and have the same minimal period. \'Alvarez-Paiva and Balacheff showed that if $\alpha$ is a critical point of $\rho_{\mathrm{sys}}$, then it is Zoll. Indeed, if the Reeb flow of a contact form $\alpha$ has an orbit that does not close up within the minimal period $T_{\min}(\alpha)$, then all nearby orbits do not close up before $T_{\min}(\alpha)$, and one can modify $\alpha$ near this orbit and change the volume at first order while keeping $T_{\min}(\alpha)$ constant. See \cite[Theorem 3.4]{apb14} for more details.

Zoll contact forms were introduced by Reeb in \cite{ree52} under the name of ``fibered dynamical systems with an integral invariant'' and are also called ``regular'' in the subsequent literature, but we prefer the term ``Zoll'', which we borrow from metric geometry: As recalled above, Zoll metrics are those Riemannian or Finsler metrics all of whose geodesics are closed and have the same length. 

Zoll contact forms have an easy description that is due to Boothby and Wang \cite{bw58} (see also \cite[Section 7.2]{gei08}): If $\alpha$ is a Zoll contact form on $M$ and $T$ is the common period of all its Reeb orbits, then the quotient of $M$ by the free $S^1$-action given by the Reeb flow is a symplectic manifold $(B,\omega)$, and the pull-back of $\omega$ by the projection map is $(1/T)\di\alpha$. Moreover, the cohomology class $[\omega]$ is integral and is the Euler class of the circle bundle $M\rightarrow B$. It follows that the systolic ratio of a Zoll contact form $\alpha$ is the inverse of a positive integer:
\[
\rho_{\mathrm{sys}}(M,\alpha) = \frac{1}{N},
\]
where $N=\langle [\omega]^{n-1},[B]\rangle$ is the Euler number of the circle bundle $M\rightarrow B$. For instance, the standard contact form on $S^{2n-1}$ is Zoll with common period $\pi$ and systolic ratio 1, and the corresponding circle bundle is the Hopf fibration $S^{2n-1} \rightarrow \CP^{n-1}$. Actually, the Hopf fibration gives a universal model for all Zoll contact forms: The restriction of it to the inverse image of any closed symplectic submanifold of $\CP^{n-1}$ defines a Zoll contact form, and any Zoll contact form with common period $\pi$ can be produced in this way, by choosing $n$ large enough (see \cite[Theorem 3.2]{apb14} and references therein).

\subsection*{The main results}

Knowing that critical points of the systolic ratio are Zoll contact forms it is natural to wonder if the converse is also true, and if so, what is the local behavior of the systolic ratio in a neighborhood of a Zoll contact form. The main result of \cite{apb14} goes in this direction: It says that if $\alpha_t$ is a one-parameter deformation of the Zoll contact form $\alpha_0$, then either the function $t\mapsto \rho_{\mathrm{sys}}(\alpha_t)$ has a local maximum at $t=0$, or $\alpha_t$ is tangent up to infinite order to the space of Zoll contact forms at $t=0$. See [Theorem 2.9]\cite{apb14} for the precise statement.

Therefore, we are led to ask: Are Zoll contact forms local maximizers of the systolic ratio, with respect to some reasonable topology on the space of contact forms? The aim of this paper is to give an affirmative answer to this question.
 
 \begin{mainthm}[Local systolic maximality of Zoll contact forms] 
 \label{main1}
 Let $\alpha_0$ be a Zoll contact form on a closed manifold $M$. For all $C>0$ there exists $\delta_C>0$ such that, if we define the $C^3$-neighborhood $\mathscr N_C$ of $\alpha$ by
\[
 \mathscr N_C:=\Big\{\alpha\text{ contact form on }M\ \Big|\ \Vert\alpha-\alpha_0\Vert_{C^2}<\delta_C,\ \Vert\alpha-\alpha_0\Vert_{C^3}<C \Big\},
 \]
 then there holds
 \[
 \rho_{\mathrm{sys}}(\alpha) \leq  \rho_{\mathrm{sys}}(\alpha_0) \qquad \forall \alpha\in  \mathscr{N}_C,
 \]
with equality if and only if $\alpha$ is Zoll. In the case of equality, there is a diffeomorphism $u: M \rightarrow M$ such that $u^* \alpha = \frac{T}{T_0} \, \alpha_0$, where $T$ and $T_0$ denote the period of the orbits of $R_{\alpha}$ and $R_{\alpha_0}$.
 \end{mainthm}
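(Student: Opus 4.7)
The plan is to exploit the Boothby--Wang description of $\alpha_0$: the Reeb flow of $\alpha_0$ generates a free $S^1$-action on $M$, and the quotient map $\pi \colon M \to B$ exhibits $M$ as a principal circle bundle over a symplectic base $(B,\omega)$ with $d\alpha_0 = T_0\,\pi^*\omega$ and Euler number $N = \langle[\omega]^{n-1},[B]\rangle$. I would first apply the normal form announced in the abstract to reduce to the case where, after pulling back by a diffeomorphism $u$ close to the identity, one can write
\[
u^*\alpha = (1+f)\,\alpha_0 + \pi^*\beta,
\]
with $f \in C^\infty(M)$ and $\beta \in \Omega^1(B)$ small in $C^1$ and having $C^2$-norms controlled by the $C^3$-bound $C$. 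The $\delta_C$ in the statement should arise from the size of the remainder in this normal form.

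Next I would attach to each $b\in B$ the \emph{fiber period}
\[
\tau(b) := \int_{\pi^{-1}(b)} u^*\alpha = T_0 + \int_{\pi^{-1}(b)} f\,\alpha_0,
\]
which represents the $\alpha$-action of the old $\alpha_0$-orbit over $b$. The central dynamical input is to upgrade this to a genuine period bound: at a minimizer $b^*$ of $\tau$ on $B$, one must produce a closed orbit of $R_\alpha$ whose period is at most $\tau(b^*)$, yielding $T_{\min}(\alpha) \leq \min_{B}\tau$. Morally, the $S^1$-family of $\alpha_0$-orbits is a Morse--Bott critical manifold for the contact action functional of $\alpha_0$, and for $\alpha$ close to $\alpha_0$ one expects to find perturbed critical points with action close to $\tau(b^*)$ via a finite-dimensional reduction on $B$ or a min-max/shortening argument in the space of parametrized loops.

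Combining this bound with a direct computation of $\int_M \alpha\wedge d\alpha^{n-1}$ in the normal form, an integration along the fibers of $\pi$ gives an expansion
\[
\mathrm{vol}(M,\alpha) = N\,T_0^{n-1}\,\overline{\tau} + O(\delta^2), \qquad \overline{\tau}\, := \,\frac{1}{N}\int_B \tau\,\omega^{n-1},
\]
with $\delta = \|\alpha-\alpha_0\|_{C^2}$ and an error term controlled by $C$. The elementary convexity inequality $x^n \leq n T_0^{n-1} x - (n-1)T_0^n + O((x-T_0)^2)$ applied with $x = \min_{B}\tau$, together with the fact that $\overline{\tau} \geq \min_B \tau$, then yields
\[
\rho_{\mathrm{sys}}(\alpha) = \frac{T_{\min}(\alpha)^n}{\mathrm{vol}(M,\alpha)} \leq \frac{(\min_B\tau)^n}{N\,T_0^{n-1}\,\overline{\tau}+O(\delta^2)} \leq \frac{1}{N} = \rho_{\mathrm{sys}}(\alpha_0),
\]
once $\delta_C$ is chosen small enough in terms of $C$. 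In the equality case the convexity inequality forces $\tau$ to be constant on $B$, the closed orbits produced above must saturate, and every fiber of $\pi$ is therefore a closed Reeb orbit of $u^*\alpha$. A short additional argument then identifies $u^*\alpha$ with $(T/T_0)\alpha_0$.

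The hard step will be the period bound $T_{\min}(\alpha) \leq \min_B\tau$ in the second paragraph. The challenge is that the bound must be \emph{exact} up to the linear order built into $\tau$: any spurious $O(\delta^2)$ contribution on the right-hand side would be of the same order as the volume correction and would destroy the systolic inequality. A naive Poincaré return map analysis is too lossy; one really has to exploit the Morse--Bott structure coming from the $S^1$-symmetry of $\alpha_0$, and this is where the $C^3$-control quantitatively enters to tame the second derivatives of $f$ and $\beta$ appearing in the perturbative construction.
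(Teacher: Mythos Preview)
Your outline has the right architecture --- a normal form, a fiberwise action functional $\tau$ (which is precisely $T_0\,\widehat S$ in the paper's notation), a period bound at the minimum of $\tau$, and a volume comparison --- but the quantitative implementation breaks down at the volume step, and this is fatal rather than technical.

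First, the normal form you postulate, $u^*\alpha = (1+f)\alpha_0 + \pi^*\beta$ with a \emph{basic} horizontal part, is not achievable in general. The paper's normal form (Theorem~\ref{main2}) reads $u^*\alpha = S\alpha_0 + \eta + df$ where $\eta$ satisfies $\imath_{R_{\alpha_0}}\eta = 0$, but $\eta$ is not a pullback from $B$; what is achieved instead is the subtler condition $\imath_{R_{\alpha_0}}d\eta = \mathscr F[dS]$ for some bundle endomorphism $\mathscr F$ which is small when $\alpha$ is close to $\alpha_0$.

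Second, and decisively, your volume expansion $\mathrm{vol}(M,\alpha) = N T_0^{n-1}\overline{\tau} + O(\delta^2)$ is wrong already at first order. Test it on $\alpha = (1+\epsilon)\alpha_0$: then $\tau \equiv (1+\epsilon)T_0$, your formula predicts $(1+\epsilon)NT_0^n + O(\epsilon^2)$, but the true volume is $(1+\epsilon)^n NT_0^n$. The discrepancy is a factor of $n$ in the linear term. With any linear-in-$\overline\tau$ formula plus $O(\delta^2)$, the chain of inequalities you write cannot close for $n\geq 2$: on the same example $(\min\tau)^n/(NT_0^{n-1}\overline\tau) = (1+\epsilon)^{n-1} > 1$, yet $\rho_{\mathrm{sys}}$ equals $1/N$ exactly. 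The ``convexity inequality'' you invoke is in fact the wrong-way inequality ($x^n \geq nT_0^{n-1}x - (n-1)T_0^n$ by convexity), so you are really only using the Taylor expansion, and all the content sits in the $O(\delta^2)$ remainders.

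The structural reason this cannot be repaired by a first-order argument is that Zoll forms are \emph{critical points} of $\rho_{\mathrm{sys}}$ (this is the \'Alvarez-Paiva--Balacheff result): the first variation vanishes, and the theorem is a statement about the sign of the second-order behavior. Any scheme carrying uncontrolled $O(\delta^2)$ errors is therefore doomed. The paper's way out is an \emph{exact} volume identity (Proposition~\ref{mainprop2}):
\[
\mathrm{vol}(M,u^*\alpha) = \int_M \Bigl( S(x)^n + \sum_{j=1}^{n-1} p_j(x) S(x)^j \Bigr)\, \alpha_0\wedge d\alpha_0^{n-1},
\]
where the crucial feature is that each $p_j$ has \emph{zero integral} against $\alpha_0\wedge d\alpha_0^{n-1}$. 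Hence replacing $S$ by its minimum gives exactly $(\min S)^n\,\mathrm{vol}(M,\alpha_0)$ with no remainder, and the monotonicity of $s\mapsto p(x,s)$ (which is where the $C^3$ bound $C$ is used, to control $\|p_j\|_{C^0}$) yields $\mathrm{vol}(M,\alpha) \geq (\min S)^n\,\mathrm{vol}(M,\alpha_0)$. This exact formula is possible only because of the normal-form condition $\imath_{R_{\alpha_0}} d\eta = \mathscr F[dS]$, which lets one trade the problematic $\eta\wedge d\eta^{n-j}$ terms for expressions of the form $dS \wedge (\cdots)$ and then integrate by parts.

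Finally, the step you flag as hardest --- the sharp period bound $T_{\min}(\alpha)\leq \min_B\tau$ --- becomes trivial once the same normal-form condition is in hand: at a critical point $b$ of $\widehat S$ one has $dS=0$ along $\pi^{-1}(b)$, hence $\imath_{R_{\alpha_0}} d\beta = -dS + \mathscr F[dS] = 0$ there, so $R_\beta$ is parallel to $R_{\alpha_0}$ and $\pi^{-1}(b)$ is a genuine closed Reeb orbit of period exactly $\widehat S(b)\,T_0$ (Proposition~\ref{mainprop1}). No Morse--Bott reduction or min-max is needed. So the difficulty is not where you locate it: the real work is engineering the normal form so that \emph{both} the variational principle and the exact volume formula hold simultaneously.
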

 
The local systolic maximality of Zoll contact forms in the $C^3$-topology is already known in dimension three: It was first proven for $M=S^3$ in \cite{abhs18} and then for arbitrary three-manifolds in \cite{bk21} (see also \cite{bk20} for a generalization to odd symplectic forms on three-manifolds and \cite{bk19c} for an application to magnetic flows on surfaces). The proofs in \cite{abhs18} and \cite{bk21} build on the fact that a closed orbit with minimal period of a contact form that is close to a Zoll one is the boundary of a global surface of section for the Reeb flow, provided that the manifold has dimension three. Global surfaces of section bounded by closed orbits are peculiar to three-manifolds, and we do not see a way of applying this approach to the higher dimensional case. 

The proof of Theorem \ref{main1} will be based instead on a normal form for contact forms close to Zoll ones. More precisely, it will use the following theorem, that is the second main result of this paper.

 \begin{mainthm}[Normal Form]
 \label{main2}
Let $\alpha_0$ be a Zoll contact form on a closed manifold $M$. There is $\delta_0>0$ such that if $\alpha$ is a contact form on $M$ with $\Vert \alpha-\alpha_0\Vert_{C^2}<\delta_0$, then there exists a diffeomorphism $u: M \rightarrow M$ such that
\[
u^*\alpha = S \alpha_0 + \eta + \di f,
\]
where:
\begin{enumerate}[(i)]
\item $S$ is a smooth positive function on $M$ that is invariant under the Reeb flow of $\alpha_0$;
\item $f$ is a smooth function on $M$ with average zero along each orbit of $R_{\alpha_0}$;
\item $\eta$ is a smooth one-form on $M$ satisfying $\imath_{R_{\alpha_0}} \eta=0$;
\item $\imath_{R_{\alpha_0}} \di\eta= \mathscr{F} [\di S]$ for a smooth endomorphism $\mathscr{F}:T^*M \rightarrow T^*M$ lifting the identity;
\item $\imath_{R_{\alpha_0}} \di f = \imath_Z\, \di S$ for a smooth vector field $Z$ on $M$ taking values in the contact distribution $\ker \alpha_0$ and having average zero along each orbit of $R_{\alpha_0}$.
\end{enumerate}
Moreover, for every integer $k\geq 0$ there is a monotonically increasing continuous function $\omega_k:[0,\infty)\to[0,\infty)$ with $\omega_k(0)=0$, such that
\[
\begin{split}
\max\Big\{\mathrm{dist}_{C^{k+1}} (u,\mathrm{id}),\  \|S-1\|_{C^{k+1}},\ \|f\|_{C^{k+1}},\ \|\eta\|_{C^k},\ \|\di\eta\|_{C^{k}},\ \|\mathscr{F}\|_{C^k}, \|Z\|_{C^k} \Big\}\\ \leq \omega_k\big(\|\alpha-\alpha_0\|_{C^{k+2}}\big).
\end{split}
\]
\end{mainthm}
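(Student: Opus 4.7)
The plan is to combine a Moser-type isotopy argument with averaging along the $S^1$-action induced by the Reeb flow of $\alpha_0$ (via the Boothby--Wang realization $\pi\colon M\to B$). Throughout, let $\overline{(\,\cdot\,)}$ denote the averaging operator over Reeb orbits, which projects onto Reeb-invariant objects.

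The first observation is that properties (i)--(iii) are purely algebraic. For any 1-form $\gamma$ on $M$, setting $S_\gamma:=\overline{\gamma(R_{\alpha_0})}$, letting $f_\gamma$ be the unique average-zero antiderivative along Reeb orbits of $\gamma(R_{\alpha_0})-S_\gamma$, and defining $\eta_\gamma:=\gamma-S_\gamma\alpha_0-\di f_\gamma$, yields the unique triple satisfying (i)--(iii). This reduces the theorem to finding a diffeomorphism $u$ close to the identity such that the canonical decomposition of $u^*\alpha$ also satisfies (iv) and (v), together with the quantitative estimates.

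To construct such a $u$, I would linearize at $u=\mathrm{id}$ and $\alpha=\alpha_0$. Writing $u$ as the time-one flow of a vector field $X=bR_{\alpha_0}+Y$ with $Y\in\ker\alpha_0$, and $\gamma:=\alpha-\alpha_0$, the expansion $u^*\alpha\approx\alpha_0+\gamma+\di b+\imath_Y\di\alpha_0$ combined with the canonical decomposition, together with the observation that (iv) and (v) linearize to $\imath_{R_{\alpha_0}}\di\eta=0$ and $R_{\alpha_0}f=0$ at $\di S=0$, produces two coupled transport equations along Reeb orbits: a scalar equation $R_{\alpha_0}b=S_\gamma-\gamma(R_{\alpha_0})$, solvable because the right-hand side has Reeb-average zero, and a contact-distribution equation $\imath_{[R_{\alpha_0},Y]}\di\alpha_0=\Theta$ for a first-order expression $\Theta$ in $(\gamma,b)$, which by non-degeneracy of $\di\alpha_0\vert_{\ker\alpha_0}$ becomes a transport equation for $Y$. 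Both are solvable with tame estimates. Feeding the resulting right-inverse into a Newton--Moser iteration (or a tame implicit function theorem with smoothing) yields the diffeomorphism $u$ near the identity, and the continuous moduli $\omega_k$ come from the tame estimates propagated through the iteration.

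The tensor $\mathscr F$ and vector field $Z$ are then constructed from the output of the iteration. The Cartan-type identity $\imath_{R_{\alpha_0}}\di(u^*\alpha)=-\di S+\imath_{R_{\alpha_0}}\di\eta$ combined with $\imath_{R_\alpha}\di\alpha=0$ expresses the remainder $\imath_{R_{\alpha_0}}\di\eta-\di S$ as $u^*\imath_V\di\alpha$, where $V:=u_*R_{\alpha_0}-\lambda R_\alpha\in\ker\alpha$ is the horizontal deviation of $u_*R_{\alpha_0}$ from $R_\alpha$; a parallel identity handles the Reeb-scalar part $R_{\alpha_0}f$. The Moser flow is tuned so that $V$ and the scalar remainder both vanish in step with $\di S$, permitting a Hadamard-type division to produce the smooth $(1,1)$-tensor $\mathscr F$ and the smooth contact-horizontal vector field $Z$. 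The chief obstacle is precisely this pointwise divisibility by $\di S$: it is not algebraically automatic, but rather a structural consequence of the contact condition combined with a careful choice of $u$, and the bookkeeping that ensures it survives at each iteration step --- together with the smoothness and periodicity of the integrated quantities along Reeb orbits --- is the technical core of the argument.
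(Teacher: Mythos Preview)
Your approach is genuinely different from the paper's, and the gap you yourself identify --- the pointwise divisibility by $\di S$ --- is exactly where it falls short of a proof.

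The paper does not iterate on the contact form at all. Instead it applies Bottkol's normal form for vector fields (proved via the ordinary Banach inverse function theorem, with no Nash--Moser machinery) to the Reeb field $R_\alpha$, obtaining a diffeomorphism $u$ and a $\phi_{\alpha_0}$-invariant vector field $V\in\ker\alpha_0$ such that $h\,u^*R_\alpha = R_{\alpha_0}-\mathscr{Q}[V]$. The decomposition $u^*\alpha=S\alpha_0+\eta+\di f$ is then taken to be the automatic one you describe. The crucial point is that $S$ is recognised as the action functional $S(x)=\tfrac{1}{T_0}\int_{\pi^{-1}(\pi(x))}u^*\alpha$, and differentiating this along the Reeb orbits, using the invariance $\mathcal L_{R_{\alpha_0}}V=0$ furnished by Bottkol, yields the pointwise identity $\di S(x)=-B_x[V(x),\cdot]$ for a bilinear form $B$ close to $\di\alpha_0$. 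Non-degeneracy of $\di\alpha_0|_{\ker\alpha_0}$ then gives $V(x)=-\mathscr B^{-1}[\di S(x)]$, which is the linear dependence of $V$ on $\di S$ that produces $\mathscr F$ (and similarly $Z$) by substitution --- no Hadamard division is needed, and no tuning of $u$ beyond what Bottkol already provides.

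By contrast, your scheme tries to iterate directly towards the conditions $\imath_{R_{\alpha_0}}\di\eta=0$ and $R_{\alpha_0}f=0$, and then hopes to factor the residual error through $\di S$. But nothing in a generic Newton step forces the error to vanish pointwise where $\di S$ does; you would need to build this divisibility into the very definition of the functional you are inverting, and you have not said how. Moreover, your linearised transport equation for $Y$ involves $\di\gamma$, so each step loses a derivative relative to the contact form, pushing you towards Nash--Moser. The paper sidesteps both issues by working at the level of vector fields, where Bottkol's carefully chosen Banach spaces (with norm $\|U\|_{C^0}+\|\mathcal L_{X_0}U\|_{C^0}$) absorb the transport without derivative loss, and where the invariance of $V$ delivers the factorisation through $\di S$ as a consequence rather than a constraint.
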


The averages of a real function $f$ or a vector field $Z$ on $M$ along the orbits of $R_{\alpha_0}$ which are mentioned in (ii) and (v) are defined as
\[
\overline{f}(x) := \frac{1}{T_0} \int_0^{T_0} f\bigl( \phi_{\alpha_0}^t(x) \bigr)\, \di t, \quad
\overline{Z}(x) := \frac{1}{T_0} \int_0^{T_0} \di \phi_{\alpha_0}^{-t}(x) \bigl[ Z(\phi_{\alpha_0}^t(x) \bigr]\, \di t, \qquad \forall x\in M,
\]
where $\phi_{\alpha_0}^t$ denotes the flow of $R_{\alpha_0}$ and $T_0$ is the period of its orbits.

The proof of Theorem \ref{main2} is based on a normal form for vector fields close to vector fields inducing a free $S^1$-action that is due to Bottkol \cite{bot80}, which we include, in the form that is needed here, as Theorem \ref{tbot}. In Appendix \ref{appbottkol}, we exhibit a proof of Bottkol's theorem following an idea we learned in \cite[Proposition 3.4]{ker99}.

Note that any one-form $\beta$ can be decomposed as
\[
\beta = S  \alpha_0 + \eta + \di f,
\]
with $S$, $\eta$ and $f$ as in (i), (ii) and (iii): Define $S(x)$ to be the integral of $\beta$ on the closed orbit of $R_{\alpha_0}$ through $x$, so that the one-form $\beta - S \alpha_0$ has zero integral on every orbit of $R_{\alpha_0}$ and hence differs from a one-form vanishing on $R_{\alpha_0}$ by the differential of a function with zero average on the orbits of $R_{\alpha_0}$ (see Lemma \ref{splitting}). Therefore, the relevant statements in the above Theorem are (iv) and (v), which establish a further relationship between the forms appearing in the  above splitting. Statement (iv) will be crucial in this paper, whereas knowing that also (v) holds will simplify the proof of Proposition \ref{rigid-zoll} below.

Being invariant under the flow of $R_{\alpha_0}$, the function $S$ descends to a smooth function
\[
\widehat{S}: B \rightarrow \R
\]
on the quotient $B$ of $M$ by the free $S^1$-action defined by this flow. Condition (iv) implies that the function $\widehat{S}$ is a variational principle for detecting closed orbits of $R_\alpha$ of short period, that is, those closed orbits that bifurcate from the $(2n-2)$-dimensional manifold $B$ of closed orbits of $R_{\alpha_0}$. Indeed, we have the following result (see Section \ref{varprinsec}).

\begin{mainprop}[Variational principle]
\label{mainprop1}
Let $\alpha_0$ be a Zoll contact form on a closed manifold $M$ and let $\pi: M \rightarrow B$ be the corresponding $S^1$-bundle. Let $\beta$ be a contact form on $M$ of the form
\[
\beta = S \alpha_0 + \eta + \di f,
\]
where $S$ and $\eta$ satisfy the conditions (i), (iii), (iv) of Theorem \ref{main2} and $f$ is any smooth function on $M$. Denote by $\widehat{S}:B \rightarrow \R$ the function that is defined by $S=\widehat{S}\circ \pi$. Then for every critical point $b$ of $\widehat{S}$ the circle $\pi^{-1}(b)$ is a closed orbit of $R_{\beta}$ of period $\widehat{S}(b) T_{\min}(\alpha_0)$. Moreover, $\beta$ is Zoll if and only if the function $S$ - or equivalently the function $\widehat{S}$ - is constant.
\end{mainprop}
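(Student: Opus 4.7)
The plan is to leverage condition~(iv) of Theorem~\ref{main2} to extract an explicit formula for $\imath_{R_{\alpha_0}}\di\beta$ and then combine it with the contact condition on $\beta$ to identify closed $R_\beta$-orbits. First I would compute
\[
\di\beta = \di S \wedge \alpha_0 + S\, \di\alpha_0 + \di\eta
\]
and contract with $R_{\alpha_0}$. Using the $R_{\alpha_0}$-invariance of $S$ (which forces $\imath_{R_{\alpha_0}}\di S = 0$), the Reeb identity $\imath_{R_{\alpha_0}}\di\alpha_0 = 0$, and condition~(iv), this produces the key identity
\[
\imath_{R_{\alpha_0}}\di\beta = \bigl(\mathscr{F} - \mathrm{Id}\bigr)[\di S].
\]
Since $S = \widehat{S}\circ\pi$, the one-form $\di S = \pi^{*}\di\widehat{S}$ vanishes along the whole fiber $\pi^{-1}(b)$ whenever $b$ is a critical point of $\widehat{S}$, so the right-hand side then vanishes identically on that fiber.

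Next I would invoke the contact condition: $\di\beta$ has rank $2n-2$ with kernel spanned by $R_\beta$. Hence along $\pi^{-1}(b)$ the nowhere vanishing field $R_{\alpha_0}$ must be a nonzero scalar multiple of $R_\beta$, say $R_{\alpha_0} = \lambda R_\beta$. Evaluating $\beta$ on $R_{\alpha_0}$ and using (iii) gives
\[
\lambda = \imath_{R_{\alpha_0}}\beta = S + R_{\alpha_0}(f).
\]
Since $\lambda$ is continuous and nowhere zero on the connected circle $\pi^{-1}(b)$, it has constant sign; and since $R_{\alpha_0}(f)$ is a total derivative along the closed $R_{\alpha_0}$-orbit, its integral over one $R_{\alpha_0}$-period vanishes, so
\[
\int_0^{T_0}\lambda\,\di t \,=\, \widehat{S}(b)\, T_0 \,>\, 0,
\]
forcing $\lambda > 0$. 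The reparametrization $\tau'(s) = 1/\lambda$ then exhibits $\pi^{-1}(b)$ as a closed orbit of $R_\beta$ of minimal period $\widehat{S}(b)\, T_{\min}(\alpha_0)$.

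For the ``moreover'' statement, if $\widehat{S}$ is constant then $\di S \equiv 0$ and the key identity above gives $\imath_{R_{\alpha_0}}\di\beta \equiv 0$ on all of $M$; the argument from the previous paragraph applies to every $b\in B$, and since $\ker\di\beta = \R R_\beta$ everywhere, one obtains that every $R_\beta$-orbit is a fiber of $\pi$, all with the common period $\widehat{S}\, T_0$, so $\beta$ is Zoll. Conversely, suppose $\beta$ is Zoll with common minimal period $T$. The smooth function $\widehat{S}$ on the compact base $B$ attains its maximum and minimum at critical points, which by the variational principle just proved produce closed $R_\beta$-orbits of respective periods $\widehat{S}_{\max}\, T_0$ and $\widehat{S}_{\min}\, T_0$. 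Both must equal $T$, forcing $\widehat{S}_{\max} = \widehat{S}_{\min} = T/T_0$, and hence $\widehat{S}$ constant. The only delicate step I foresee is securing $\lambda > 0$ along the fiber so that the reparametrization captures the correct minimal period; this is dispatched by the average computation above, and everything else is routine bookkeeping.
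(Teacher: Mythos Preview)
Your proof is correct and follows essentially the same route as the paper: both compute $\imath_{R_{\alpha_0}}\di\beta = (\mathscr{F}-\mathrm{Id})[\di S]$, deduce that $R_{\alpha_0}$ is parallel to $R_\beta$ along fibers over critical points of $\widehat{S}$, and read off the period as $\widehat{S}(b)\,T_0$ (the paper does this directly via $\int_{\pi^{-1}(b)}\beta$, which is the same integral as your $\int_0^{T_0}\lambda\,\di t$). Your extra care in showing $\lambda>0$ via continuity and the average is a nice touch that the paper leaves implicit; it also justifies that $\widehat{S}(b)\,T_0$ is the \emph{minimal} period, which makes the converse direction of the Zoll characterization airtight without any appeal to smallness.
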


Theorem \ref{main2} and Proposition \ref{mainprop1} immediately imply that any contact form $\alpha$ that is $C^2$-close to the Zoll contact form $\alpha_0$ has at least as many closed orbits as the minimal number of critical points of a smooth function on $B$. Indeed, the image by the diffeomorphism $u$ of Theorem \ref{main2} of the circles $\pi^{-1}(b)$ corresponding to critical points $b\in B$ of $\widehat{S}$ are closed orbits of $R_{\alpha}$.

For instance, if $\alpha$ is a contact form on $S^{2n-1}$ that is $C^2$-close to the standard Zoll contact form whose Reeb trajectories define the Hopf fibration $S^{2n-1}\to \C\mathbb P^{n-1}$, then $R_\alpha$ has at least $n$ closed orbits of period close to $\pi$. Proving this and more general multiplicity results for closed orbits bifurcating from manifolds of closed orbits was Bottkol's original motivation for his normal form. See also \cite{wei73b}, \cite{wei77b}, \cite{mos76}, \cite{gin87}, \cite{gin90}, \cite{ban90} and \cite{br94} for other approaches to this question. 

Besides for producing a finite dimensional variational principle, the power of the normal form appearing in Theorem \ref{main2} lies in the fact that it yields the following useful formula for the volume.

\begin{mainprop}[Volume formula]\label{mainprop2}
Assume that $\alpha_0$ is a Zoll contact form on a $(2n-1)$-dimensional closed manifold $M$ and let $\beta$ be a one-form on $M$ of the kind 
\[
\beta = S \alpha_0 + \eta+\di f,
\]
where $S$ and $f$ are smooth functions on $M$ and $\eta$ is a one-form satisfying
\[
\imath_{R_{\alpha_0}} \eta = 0, \qquad \imath_{R_{\alpha_0}} \di \eta = \mathscr{F} [\di S],
\]
for some endomorphism $\mathscr{F}: T^* M \rightarrow T^*M$ lifting the identity. Then 
\[
\int_M \beta \wedge \di \beta^{n-1} = \int_M p(x,S(x)) \, \alpha_0 \wedge \di\alpha_0^{n-1},
\]
where $p: M \times \R \rightarrow \R$ is a smooth function of the form
\[
p(x,s) = s^n + \sum_{j=1}^{n-1} p_j(x) s^j,
\]
whose coefficients $p_j$ are smooth functions on $M$  satisfying
\[
\int_M p_j \, \alpha_0 \wedge \di\alpha_0^{n-1} = 0 \qquad  \forall j=1,\dots,n-1.
\]
Moreover, for every $c>0$ and $\epsilon>0$ there exists $\delta>0$ such that if
\[
\max\{\|\eta\|_{C^0}, \|\di\eta\|_{C^0}, \|\mathscr{F}\|_{C^0}\} < \delta, \qquad \max\{ \|\eta\|_{C^1}, \|\di\eta\|_{C^1}, \|\mathscr{F}\|_{C^1}\} < c,
\]
then $\|p_j\|_{C^0} < \epsilon$ for every $j=1,\dots,n-1$.
\end{mainprop}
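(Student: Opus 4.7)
I would start by using Stokes's theorem to remove the $\di f$ piece: since $\di\beta^{n-1}$ is closed, $\di f\wedge\di\beta^{n-1}=\di(f\,\di\beta^{n-1})$ is exact, and hence
\[
\int_M\beta\wedge\di\beta^{n-1}=\int_M(S\alpha_0+\eta)\wedge\di\beta^{n-1}.
\]
The plan is then to expand the right-hand side systematically and to organize it as a polynomial in $S(x)$ with the required properties. Writing $\di\beta=S\di\alpha_0+\di S\wedge\alpha_0+\di\eta$, I would use the hypothesis on $\imath_{R_{\alpha_0}}\di\eta$ to split off a horizontal piece:
\[
\di\eta=\alpha_0\wedge\mathscr{F}[\di S]+\gamma,\qquad \imath_{R_{\alpha_0}}\gamma=0.
\]
Two algebraic facts will carry most of the weight of the reduction. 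First, $(\alpha_0\wedge\mathscr{F}[\di S])^2=0$, so when expanding powers of $\di\beta$ by the multinomial theorem, this factor appears at most once. Second, any $(2n-1)$-form on $M$ that annihilates $R_{\alpha_0}$ must vanish, because the contact distribution $\ker\alpha_0$ has dimension $2n-2$; this kills every purely horizontal term that arises.

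With these tools, I would carry out the binomial expansion $\di\beta^{n-1}=\sum_k\binom{n-1}{k}\di(S\alpha_0)^{n-1-k}\wedge\di\eta^k$, using $\di(S\alpha_0)^m=S^m\di\alpha_0^m+mS^{m-1}\di S\wedge\alpha_0\wedge\di\alpha_0^{m-1}$ (again from $(\di S\wedge\alpha_0)^2=0$), multiply by $S\alpha_0+\eta$, and collect. In the $(S\alpha_0)$-contribution the $\di S\wedge\alpha_0$ pieces die because $\alpha_0\wedge\alpha_0=0$, leaving terms of the shape $S^{n-k}\alpha_0\wedge\di\alpha_0^{n-1-k}\wedge\di\eta^k$. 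In the $\eta$-contribution the purely horizontal parts vanish by the horizontal-form observation, leaving terms carrying either an explicit $\alpha_0\wedge\mathscr{F}[\di S]$ factor or an explicit $\di S$. All the explicit-$\di S$ terms can be integrated by parts via $S^{m-1}\di S=\di(S^m)/m$; crucially, a direct computation shows that each such $K_k$ piece combines with the horizontal-looking $J_k$ piece into an $I_{k+1}$-type term, collapsing the binomial coefficients $\binom{n-1}{k}+\binom{n-1}{k-1}$ into $\binom{n}{k}$. The leftover end-term (the one coming from $k=n-1$, containing $\mathscr{F}[\di S]$ that cannot be paired) is reinterpreted via the duality $\omega\wedge\imath_X\nu=\omega(X)\nu$ as $(\mathscr{F}^*W)(S)\,\nu$ for a vector field $W$ built from $\eta$ and $\gamma$, and a final integration by parts converts it to $S\cdot\divergence(\mathscr{F}^*W)\,\nu$. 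The net result is an identity of the form $\int_M\beta\wedge\di\beta^{n-1}=\int_M\bigl[S^n+\sum_{j=1}^{n-1}p_j(x)\,S^j\bigr]\nu$, with each $p_j$ a specific combination of wedge-contractions of $\eta,\di\eta,\mathscr{F},\di\alpha_0$ plus a divergence contribution in the coefficient of $S$.

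For the vanishing $\int_M p_j\,\nu=0$, I would use Stokes once more: the divergence contributions integrate to zero automatically, while each remaining coefficient is of the shape $\binom{n}{k}\,\alpha_0\wedge\di\alpha_0^{n-1-k}\wedge\di\eta^k/\nu$, whose integral equals $\int_M\eta\wedge\di\alpha_0^{n-k}\wedge\di\eta^{k-1}$ after integrating by parts on $\di(\eta\wedge\alpha_0\wedge\di\alpha_0^{n-1-k}\wedge\di\eta^{k-1})$, an integrand that is horizontal for $k=1$ (giving zero for free) and whose remainder for $k\ge 2$ is re-expressed using the constraint and the freedom, noted above, to shift mass between the $p_j$'s subject only to the pointwise relation $\sum_j p_j(x)S(x)^j=P(x)-S(x)^n$—an $(n-2)$-parameter family at every point—so that $(n-1)$ zero-integral conditions can be imposed simultaneously.

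The smallness bound is then the easiest part: every $p_j$ for $j<n$ contains at least one factor among $\eta$, $\di\eta$, or $\mathscr{F}$ (with at most one derivative, coming from the divergence), so a Leibniz estimate gives
\[
\|p_j\|_{C^0}\le C\bigl(\|\di\eta\|_{C^0}+\|\eta\|_{C^0}\|\mathscr{F}\|_{C^1}+\|\eta\|_{C^1}\|\mathscr{F}\|_{C^0}+\cdots\bigr),
\]
and the hypothesis $\max\{\|\eta\|_{C^0},\|\di\eta\|_{C^0},\|\mathscr{F}\|_{C^0}\}<\delta$ combined with $C^1$-boundedness by $c$ controls each such product by a quantity that tends to zero with $\delta$. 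The main obstacle I expect is the algebraic bookkeeping: keeping track of signs and coefficients through the binomial expansion, verifying the telescoping $J_k+K_k=I_{k+1}$ identity, and choosing the redistribution of mass among the $p_j$'s so that both zero-integrals and smallness estimates hold simultaneously.
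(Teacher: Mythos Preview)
Your overall plan---Stokes to remove $\di f$, binomial expansion of $\di\beta^{n-1}$, then repeated integration by parts to extract a polynomial in $S$---is exactly the paper's strategy, and several of your observations (the horizontal top-form vanishing, the role of the relation $\imath_{R_{\alpha_0}}\di\eta=\mathscr{F}[\di S]$) are the right ones. But there is a real gap in the zero-integral step, and it undermines the smallness step as well.

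After your expansion you arrive at coefficients $q_j$ that you do \emph{not} know to have zero integral; your plan is to use the ``$(n-2)$-parameter freedom'' in solving $\sum_j p_j(x)S(x)^j=P(x)-S(x)^n$ pointwise to redistribute mass and force $\int_M p_j\,\nu=0$. You never verify that this linear problem is solvable (one pointwise constraint, $n-1$ integral constraints; the compatibility is not automatic), and even if it is, the redistribution produces $p_j$'s that depend on $S$. The proposition, however, requires $\|p_j\|_{C^0}<\epsilon$ under hypotheses on $\eta,\di\eta,\mathscr{F}$ alone, with \emph{no} assumption on $S$. A correction term like $r_1=-\sum_{j\ge 2}r_jS^{j-1}$ is not $C^0$-small uniformly in $S$, so your smallness argument breaks.

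The paper avoids this entirely by a cleaner organization. The key device you are missing is the \emph{dual endomorphism} $\mathscr{F}^\vee:\Lambda^{2n-2}M\to\Lambda^{2n-2}M$, defined by $\mathscr{F}[\gamma_1]\wedge\gamma_2=\gamma_1\wedge\mathscr{F}^\vee[\gamma_2]$. Using the identity $\xi=\alpha_0\wedge\imath_{R_{\alpha_0}}\xi$ on top-forms together with $\imath_{R_{\alpha_0}}\di\eta=\mathscr{F}[\di S]$, every term in the expansion that is not $S^n\,\alpha_0\wedge\di\alpha_0^{n-1}$ can be rewritten as $\di(S^j)\wedge\omega_j$ for explicit $(2n-2)$-forms $\omega_j$ built only from $\alpha_0,\di\alpha_0,\eta,\di\eta,\mathscr{F}^\vee$ (and \emph{not} from $S$). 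Concretely, with $\tau_j:=\alpha_0\wedge\di\alpha_0^{j-1}\wedge\eta\wedge\di\eta^{n-1-j}$ one finds
\[
\int_M\beta\wedge\di\beta^{n-1}=\int_M\Bigl(S^n\,\nu+\sum_{j=1}^{n-1}\di(S^j)\wedge\bigl(\tbinom{n}{j}\tau_j-\tfrac{n-j}{j}\tbinom{n}{j-1}\mathscr{F}^\vee[\tau_j]\bigr)\Bigr).
\]
A single integration by parts then gives $p_j\,\nu=-\tbinom{n}{j}\di\tau_j+\tfrac{n-j}{j}\tbinom{n}{j-1}\di(\mathscr{F}^\vee[\tau_j])$, which is exact---so $\int_M p_j\,\nu=0$ is automatic---and is manifestly $S$-independent, so your Leibniz-type estimate for $\|p_j\|_{C^0}$ now goes through cleanly. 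This is what replaces your redistribution argument, and it is the step you should aim for.
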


It is now easy to see how Theorem \ref{main2}, Proposition \ref{mainprop1} and Proposition \ref{mainprop2} lead to the proof of the sharp systolic inequality of Theorem \ref{main1}. Indeed, for every $C>0$ we can find a positive number $\delta_C$ such that if $\alpha$ belongs to the neighborhood $\mathscr{N}_C$ defined in Theorem \ref{main1}, then $\alpha$ can be put in the normal form $\beta=u^*\alpha$ of Theorem \ref{main2} by a diffeomorphism $u$, and furthermore the function $s\mapsto p(x,s)$ of Proposition \ref{mainprop2} is strictly increasing on the interval $[\min S,\max S]$ for every $x\in M$. This fact, together with the fact that the principal coefficient of the polynomial map $p$ is 1 and all the other coefficients have vanishing integral, implies the estimate
\[
\begin{split}
\mathrm{vol}(M,\alpha) &= \mathrm{vol}(M,u^*\alpha) = \int_M p(x,S(x))\, \alpha_0\wedge \di\alpha_0^{n-1}\\ & \geq \int_M p(x,\min S)\, \alpha_0\wedge \di\alpha_0^{n-1}
= (\min S)^n\mathrm{vol}(M,\alpha_0).
\end{split}
\]
By Proposition \ref{mainprop1}, the Reeb vector field of $\alpha$ has a closed orbit of period $(\min S) T_{\min}(\alpha_0)$, and hence 
\[
T_{\min}(\alpha) \leq (\min S) T_{\min}(\alpha_0).
\]
The above two inequalities imply the desired sharp systolic bound
\[
\frac{T_{\min}(\alpha)^n}{\mathrm{vol}(M,\alpha)} \leq \frac{T_{\min}(\alpha_0)^n}{\mathrm{vol}(M,\alpha_0)}.
\]
The fact that $\alpha$ is Zoll if and only if the function $S$ is constant, see again Proposition \ref{mainprop1}, implies that the equality holds in the above estimate if and only if $\alpha$ is Zoll. In the latter case, the fact that $\alpha$ is strictly contactomorphic to $\alpha_0$ up to a multiplicative constant follows from Moser's homotopy argument, see Proposition \ref{rigid-zoll} below.

We refer to Section \ref{systolicsec} for a detailed proof. At the end of that section we also discuss a lower bound for the maximal period of ``short'' closed orbits. 

\subsection*{Three applications of Theorem \ref{main1}}

We conclude this introduction with three corollaries of the local systolic maximality of Zoll contact forms.

\paragraph{Finsler geodesic flows.} The first corollary is immediate and consists in applying Theorem \ref{main1} to the contact form $\alpha_F$ on $S_F^*W$ that is induced by a Finsler metric $F$ on $W$.

\begin{maincor}
\label{cor1}
Let $F_0$ be a Zoll Finsler metric on the closed manifold $W$. Then $F_0$ has a $C^3$-neighborhood $\mathscr{U}$ in the space of all Finsler metrics on $W$ such that
\[
\rho_{\mathrm{sys}}(W,F) \leq \rho_{\mathrm{sys}}(W,F_0) \qquad \qquad \forall F\in \mathscr{U},
\]
with equality if and only if $F$ is Zoll. In the equality case, the geodesic flow of $F$ is smoothly conjugated to that of $F_0$, up to a linear time reparametrization.
\hfill\qed
\end{maincor}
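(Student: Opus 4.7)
The plan is to reduce the corollary to Theorem \ref{main1} by transferring all Finsler metrics near $F_0$ to contact forms on the single fixed contact manifold $(S^*_{F_0}W,\alpha_{F_0})$. The point is that, although the unit cotangent bundles $S^*_FW$ sit inside $T^*W$ as different hypersurfaces, radial projection in each fiber provides a canonical diffeomorphism between them that depends smoothly on $F$. Pulling $\alpha_F$ back by this diffeomorphism yields a contact form $\widetilde\alpha_F$ on $S^*_{F_0}W$ that is $C^k$-close to $\alpha_{F_0}$ whenever $F$ is $C^k$-close to $F_0$, and Theorem \ref{main1} then does all the work.

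More concretely, let $F^*$ denote the dual Finsler metric on $T^*W$ and define
\[
\Psi_F:S^*_{F_0}W\longrightarrow S^*_FW, \qquad \Psi_F(p):=\frac{p}{F^*(p)}.
\]
Since $F^*>0$ away from the zero section, $\Psi_F$ is a smooth diffeomorphism, and $\Psi_{F_0}=\mathrm{id}$. Because $\alpha_F$ is the restriction of the tautological Liouville form $\lambda=p\,\di q$ and $\Psi_F$ is a fiberwise rescaling, a direct computation gives
\[
\widetilde\alpha_F:=\Psi_F^*\alpha_F=\frac{1}{F^*\big|_{S^*_{F_0}W}}\,\alpha_{F_0}.
\]
Thus the assignment $F\mapsto\widetilde\alpha_F$ is continuous from the $C^3$-topology on Finsler metrics on $W$ to the $C^3$-topology on contact forms on $S^*_{F_0}W$, with $\widetilde\alpha_{F_0}=\alpha_{F_0}$. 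Since $\Psi_F$ is a diffeomorphism, it preserves both volume and minimal period, and the identity $\mathrm{vol}(S_F^*W,\alpha_F)=n!\,\omega_n\,\mathrm{vol}(W,F)$ recalled in the introduction gives
\[
\rho_{\mathrm{sys}}(S^*_{F_0}W,\widetilde\alpha_F)=\rho_{\mathrm{sys}}(S^*_FW,\alpha_F)=\frac{1}{n!\,\omega_n}\rho_{\mathrm{sys}}(W,F).
\]

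Now apply Theorem \ref{main1} with $\alpha_0=\alpha_{F_0}$. Fix any $C>0$ and let $\delta_C$ be the corresponding number. By continuity of $F\mapsto\widetilde\alpha_F$ there exists a $C^3$-neighborhood $\mathscr U$ of $F_0$ in the space of Finsler metrics such that $\|\widetilde\alpha_F-\alpha_{F_0}\|_{C^2}<\delta_C$ and $\|\widetilde\alpha_F-\alpha_{F_0}\|_{C^3}<C$ for every $F\in\mathscr U$; in other words $\widetilde\alpha_F\in\mathscr N_C$. Theorem \ref{main1} then yields $\rho_{\mathrm{sys}}(S^*_{F_0}W,\widetilde\alpha_F)\leq\rho_{\mathrm{sys}}(S^*_{F_0}W,\alpha_{F_0})$, which after multiplication by $n!\,\omega_n$ is exactly $\rho_{\mathrm{sys}}(W,F)\leq\rho_{\mathrm{sys}}(W,F_0)$.

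For the equality case, note that $\widetilde\alpha_F$ is Zoll if and only if the Reeb flow of $\alpha_F$, which is conjugate to the geodesic flow of $F$, has all orbits closed of a common period; thus $\widetilde\alpha_F$ is Zoll if and only if $F$ is Zoll. Hence the equality clause of Theorem \ref{main1} translates directly into the equality clause of the corollary. Moreover, in the equality case Theorem \ref{main1} furnishes a diffeomorphism $u:S^*_{F_0}W\to S^*_{F_0}W$ with $u^*\widetilde\alpha_F=(T/T_0)\,\alpha_{F_0}$, and then the composition
\[
\Psi_F\circ u:S^*_{F_0}W\longrightarrow S^*_FW
\]
pulls $\alpha_F$ back to the constant multiple $(T/T_0)\,\alpha_{F_0}$ of $\alpha_{F_0}$. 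Since a constant rescaling of a contact form rescales its Reeb vector field by the inverse constant, $\Psi_F\circ u$ conjugates the geodesic flow of $F$ on $S^*_FW$ with the geodesic flow of $F_0$ on $S^*_{F_0}W$ up to a linear time reparametrization by the factor $T_0/T$. There is essentially no hard step here: the entire proof is a reduction, and the only point requiring a little care is verifying that the radial rescaling $\Psi_F$ has the expected continuity in the $C^3$-topology on Finsler metrics, which is immediate from the formula $\widetilde\alpha_F=(1/F^*)\alpha_{F_0}$.
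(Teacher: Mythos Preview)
Your proof is correct and follows exactly the approach indicated in the paper, which simply declares the corollary immediate from Theorem \ref{main1} via the identification $\rho_{\mathrm{sys}}(S^*_FW,\alpha_F)=\frac{1}{n!\,\omega_n}\rho_{\mathrm{sys}}(W,F)$. You have spelled out the radial-projection diffeomorphism $\Psi_F$ and the formula $\widetilde\alpha_F=(1/F^*)\,\alpha_{F_0}$ more explicitly than the paper does, but the substance is the same.
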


In the above result, the $C^3$-topology on the space of smooth Finsler metrics on the closed manifold $W$ is defined by restricting these functions $F: T W \rightarrow \R$ to the unit tangent sphere bundle of some fixed Riemannian metric on $W$ (the resulting topology is eventually independent of the choice of this metric).

In dimension two, this theorem follows from known results: The only surfaces admitting Zoll Finsler metrics are $S^2$, for which this result was proven in the already mentioned articles \cite{abhs17} and \cite{abhs18}, and $\RP^2$, for which the result immediately follows by lifting the metric to $S^2$. Actually, reversible Zoll Finsler metrics on $\RP^2$ are global maximizers of the systolic ratio among reversible Finsler metrics, as proven by Ivanov in \cite{iva11}. In higher dimensions, the local sharp systolic inequality of Corollary \ref{cor1} appears to be a new result, even for Riemannian perturbations of simple rank-one symmetric spaces, such as the round $S^n$ or the round $\RP^n$. In particular, this corollary gives a positive answer to the local version of Question 5.3 in Berger's survey paper \cite{ber70}.

\paragraph{Symplectic capacity of convex domains.} Our next corollary concerns the behavior of symplectic capacities on convex domains in $\R^{2n}$. Recall that a (normalized) symplectic capacity on the vector space $\R^{2n}$, endowed with its standard symplectic structure $\omega_0$, is a function $c: \{ \mbox{open subsets of } \R^{2n} \} \rightarrow [0,+\infty]$ that satisfies the following conditions:
\begin{enumerate}[(c1)]
\item Monotonicity: $c(A_1)\leq c(A_2)$ if $A_1\subset A_2$.
 \item Symplectic invariance: $c(\varphi(A)) = c(A)$ if $\varphi: A \hookrightarrow \R^{2n}$ is a symplectomorphism.
 \item Homogeneity: $c(\lambda A) = \lambda^2 c(A)$ for all $\lambda>0$.
 \item Normalization: $c(B^{2n})=c(Z)=\pi$, where $B^{2n}$ is the unit ball in $\R^{2n}$ and $Z$ is the cylinder $B^2\times \R^{2n-2}$.
 \end{enumerate}
 By definition, every symplectic capacity $c$ satisfies
 \begin{equation}
 \label{sandwich}
 c_{\mathrm{in}} \leq c \leq c_{\mathrm{out}},
 \end{equation}
 where  $c_{\mathrm{in}}$ and $c_{\mathrm{out}}$ are the functions
 \[
 \begin{split}
 c_{\mathrm{in}} (A) &:= \sup\{ \pi r^2 \mid \mbox{there exists a symplectic embedding of } r B^{2n} \mbox{ into }A \}, \\
 c_{\mathrm{out}} (A) &:= \inf \{ \pi r^2 \mid \mbox{there exists a symplectic embedding of } A  \mbox{ into } r Z \}.
\end{split}
\]
By Gromov's non squeezing theorem, $c_{\mathrm{in}}$ and $c_{\mathrm{out}}$ are themselves capacities; $c_{\mathrm{in}}$ is called Gromov width, whereas $c_{\mathrm{out}}$ is called cylindrical capacity.
 
Many other non-equivalent symplectic capacities have been constructed in this and more general settings, but for convex domains many of them have been shown to coincide: This is the case for the first of the Ekeland--Hofer capacities (see \cite{eh89}), for the Hofer--Zehnder capacity (see \cite{hz90}), for the Viterbo capacity (see \cite{her04}) and for the capacity coming from symplectic homology (see \cite{ak19} and \cite{iri19}). Following a common usage, we shall refer to the common value of these capacities on convex domains as Ekeland--Hofer--Zehnder capacity and denote it by $c_{\mathrm{EHZ}}$. This capacity is related to contact systolic geometry.  Indeed, when $C\subset \R^{2n}$ is a convex bounded open set containing the origin and having a smooth boundary, then
\begin{equation}
\label{c=T}
c_{\mathrm{EHZ}}(C) = T_{\min}(\alpha_C),
\end{equation}
the minimal period of closed orbits on $\partial C$ with respect to the Reeb flow induced by the contact form $\alpha_C := \lambda_0|_{\partial C}$, where $\lambda_0$ is the homogeneous primitive of $\omega_0$, that is the one-form
\[
\lambda_0 := \frac{1}{2} \sum_{j=1}^n (x_j \di y_j - y_j \di x_j).
\]

In \cite{vit00}, Viterbo formulated a challenging conjecture relating symplectic capacities and volume: If $c$ is any symplectic capacity and $C\subset \R^{2n}$ is a non-empty convex bounded open set, then
\begin{equation}
\label{viterbo}
c(C)^n \leq \mathrm{vol}(C,\omega_0^n),
\end{equation}
with equality if and only if $C$ is symplectomorphic to a ball. Note that $\mathrm{vol}(C,\omega_0^n)$ is $n!$ times the Euclidean volume of $C$. Note also that (\ref{viterbo}) is trivially true for the Gromov width $c_{\mathrm{in}}$. Proving that all symplectic capacities agree on convex domains - a long standing open question - would then imply (\ref{viterbo}) in general.

The bound (\ref{viterbo}) has been shown to be asymptotically true, that is, valid up to a multiplicative constant that is independent of the dimension, in \cite{amo08}. Moreover, its validity in the sharp form for the Ekeland--Hofer--Zehnder capacity would imply the Mahler conjecture in convex geometry, see \cite{ako14}. 

Thanks to Theorem \ref{main1}, we can prove the sharp version of Viterbo's conjecture assuming the convex domain $C$ to be $C^3$-close to a ball (see Section \ref{viterbo-sec} below for the precise definition of $C^k$-closedness for convex domains with smooth boundary).

\begin{maincor}
\label{cor2}
There is a $C^3$-neighborhood $\mathscr{B}$ of the ball in the space of smooth convex bounded open subsets of $\R^{2n}$ such that every symplectic capacity $c$ satisfies
\[
c(C)^n \leq \mathrm{vol}(C,\omega_0^n) \qquad \forall C\in \mathscr{B},
\]
with equality if and only if $C$ is symplectomorphic to a ball.
\end{maincor}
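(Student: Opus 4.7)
The plan is to reformulate Viterbo's inequality as the contact systolic inequality of Theorem \ref{main1} applied to the Reeb flow on $\partial C$, thereby obtaining the bound for the Ekeland--Hofer--Zehnder capacity, and then to upgrade it to an arbitrary capacity using the sandwich inequality (\ref{sandwich}) together with an auxiliary squeezing of convex bodies close to the ball.

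I would first identify $\partial C$ with $S^{2n-1}$. After shrinking $\mathscr{B}$, every $C\in\mathscr{B}$ contains the origin and is starshaped, with $\partial C$ the radial graph of a smooth function $\rho: S^{2n-1}\to(0,\infty)$ that is $C^3$-close to the constant $1$; the radial diffeomorphism $\psi(x):=\rho(x)x$ then pulls back $\alpha_C=\lambda_0|_{\partial C}$ to a contact form $\alpha:=\psi^*\alpha_C$ on $S^{2n-1}$ which is $C^3$-close to $\alpha_0:=\lambda_0|_{S^{2n-1}}$. The form $\alpha_0$ is Zoll: its Reeb vector field generates the Hopf flow on $S^{2n-1}$, the common period is $\pi$, and $\rho_{\mathrm{sys}}(\alpha_0)=1$. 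Applying Theorem \ref{main1} to the pair $(\alpha,\alpha_0)$ then gives, after shrinking $\mathscr{B}$ once more, $T_{\min}(\alpha)^n\leq \mathrm{vol}(S^{2n-1},\alpha)$, with equality iff $\alpha$ is Zoll. Since $\psi$ is a diffeomorphism we have $T_{\min}(\alpha)=T_{\min}(\alpha_C)$, and Stokes' theorem together with $\omega_0=\di\lambda_0$ yields
\[
\mathrm{vol}(\partial C,\alpha_C)=\int_{\partial C}\lambda_0\wedge \omega_0^{n-1}=\int_C\omega_0^n=\mathrm{vol}(C,\omega_0^n).
\]
Combined with the identity $T_{\min}(\alpha_C)=c_{\mathrm{EHZ}}(C)$ of (\ref{c=T}), this yields $c_{\mathrm{EHZ}}(C)^n\leq \mathrm{vol}(C,\omega_0^n)$, with equality iff $\alpha_C$ is Zoll.

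To pass from $c_{\mathrm{EHZ}}$ to an arbitrary symplectic capacity $c$, I would use the sandwich $c(C)\leq c_{\mathrm{out}}(C)$ together with the auxiliary squeezing
\[
c_{\mathrm{out}}(C)\leq c_{\mathrm{EHZ}}(C) \qquad \forall\, C\in\mathscr{B},
\]
valid after possibly further shrinking $\mathscr{B}$. This amounts to producing, for each such $C$, a symplectic embedding of $C$ into the cylinder $rZ$ with $\pi r^2=c_{\mathrm{EHZ}}(C)$; this step is the main technical obstacle. The construction should exploit the near-Hopf geometry of $\partial C$: by Proposition \ref{mainprop1} applied to the normal form of Theorem \ref{main2}, the minimal Reeb orbit $\gamma\subset\partial C$ bounds a symplectic disk in $\overline{C}$ of area $c_{\mathrm{EHZ}}(C)$, a Weinstein-type tubular neighborhood of which is symplectomorphic to a standard cylindrical piece of the same capacity; the starshapedness of $C$ together with a Moser-type deformation then extends this local model to a global symplectic embedding $C\hookrightarrow rZ$, in the spirit of the intermediate-dimensional non-squeezing result stated as the third application in the introduction. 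Combined with the previous step, this yields $c(C)^n\leq c_{\mathrm{out}}(C)^n\leq c_{\mathrm{EHZ}}(C)^n\leq \mathrm{vol}(C,\omega_0^n)$.

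Finally, for the equality case, if $c(C)^n=\mathrm{vol}(C,\omega_0^n)$ then every inequality above is an equality, so in particular $c_{\mathrm{EHZ}}(C)^n=\mathrm{vol}(C,\omega_0^n)$ and the rigidity clause of Theorem \ref{main1} yields a diffeomorphism $u:S^{2n-1}\to S^{2n-1}$ with $u^*\alpha=r^2\alpha_0$, where $r:=\sqrt{T_{\min}(\alpha_C)/\pi}$. This provides a strict contactomorphism between $(\partial C,\alpha_C)$ and $(rS^{2n-1},\lambda_0|_{rS^{2n-1}})$. Since both $C$ and $rB^{2n}$ are starshaped with respect to the origin and the Liouville vector field $X:=\tfrac12\sum_j(x_j\partial_{x_j}+y_j\partial_{y_j})$, which satisfies $\imath_X\omega_0=\lambda_0$ and is transverse to both boundaries, this contactomorphism extends along the flow of $X$ to a diffeomorphism $C\to rB^{2n}$ pulling $\omega_0$ back to itself. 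Hence $C$ is symplectomorphic to a round ball, completing the argument.
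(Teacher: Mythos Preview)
Your overall architecture matches the paper's: reduce to the systolic inequality on $S^{2n-1}$ via Theorem \ref{main1}, obtain the bound for $c_{\mathrm{EHZ}}$, then upgrade to an arbitrary capacity via $c\leq c_{\mathrm{out}}\leq c_{\mathrm{EHZ}}$, and treat equality by rigidity. However, two of your steps contain genuine gaps.

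\textbf{The squeezing step.} Your sketch for $c_{\mathrm{out}}(C)\leq c_{\mathrm{EHZ}}(C)$ is not a proof. The phrases ``bounds a symplectic disk\ldots Weinstein-type tubular neighborhood\ldots Moser-type deformation then extends this local model to a global symplectic embedding'' do not produce an embedding $C\hookrightarrow rZ$; in particular nothing in your outline uses convexity, which is indispensable. The paper's argument (Proposition \ref{EHZ=cyl}) is different and concrete: normalize so that $T_{\min}(\alpha_C)=\pi$, show the minimal orbit $\zeta$ is $C^3$-close to a Hopf circle, and use a generating-function lemma (Lemma \ref{spostami}) to build a compactly supported symplectomorphism of $\C^n$, $C^2$-close to the identity, taking $\zeta$ onto the standard Hopf circle $\gamma_0\subset S^1\times\{0\}$. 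The image $C'$ is then still convex, and since $\gamma_0$ is a closed characteristic of $\partial C'$, the outward normal to $\partial C'$ along $\gamma_0$ lies in $\C\times\{0\}$; convexity then forces $C'\subset B^2\times\C^{n-1}=Z$. Your proposal misses both the mechanism (move the orbit, not the domain) and the key geometric input (convexity gives the global inclusion for free once the orbit is standard).

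\textbf{The equality case.} Your Liouville-flow extension of the strict contactomorphism $\partial C\to rS^{2n-1}$ is \emph{not} a symplectomorphism of $C$ onto $rB^{2n}$: the radially extended map is only continuous at the origin and in general fails to be smooth there (it is positively $1$-homogeneous, so its differential at $0$ would have to agree with the extension itself, which is typically nonlinear). This is precisely why the paper isolates Proposition \ref{symplectic}: one first obtains your map $\psi$, smooth on $\C^n\setminus\{0\}$ with $\psi^*\lambda_0=\lambda_0$, and then smooths it near the origin by passing to its ($2$-homogeneous) generating function $S$, cutting $S$ off near $0$, and reading off a globally smooth symplectomorphism from the modified generating function. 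Without this correction your equality argument is incomplete.
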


For $n=2$ and $c=c_{\mathrm{EHZ}}$, this is proven in \cite{abhs18}. For general $n$ and $c$, we shall deduce the above corollary from Theorem \ref{main1} and the following two results.

\begin{mainprop}
\label{EHZ=cyl}
There is a $C^3$-neighborhood $\mathscr{B}$ of the ball  $B^{2n}$ in the space of smooth convex bounded open subsets of $\R^{2n}$ on which the Ekeland--Hofer--Zehnder capacity and the cylindrical capacity coincide:
\[
c_{\mathrm{EHZ}}(C) = c_{\mathrm{out}}(C) \qquad \forall C\in \mathscr{B}.
\]
\end{mainprop}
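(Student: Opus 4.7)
The inequality $c_{\mathrm{EHZ}}(C) \leq c_{\mathrm{out}}(C)$ is automatic, since $c_{\mathrm{EHZ}}$ is a symplectic capacity on the class of convex bodies and every symplectic capacity is bounded above by $c_{\mathrm{out}}$. The content of the proposition is therefore the reverse inequality $c_{\mathrm{out}}(C) \leq T_{\min}(\alpha_C)$, using $c_{\mathrm{EHZ}}(C) = T_{\min}(\alpha_C)$ from \eqref{c=T}. Unwinding the definition of $c_{\mathrm{out}}$, this amounts to producing, for every $\epsilon > 0$ and every $C$ sufficiently $C^3$-close to $B^{2n}$, a symplectic embedding $C \hookrightarrow B^2(R) \times \R^{2n-2}$ with $\pi R^2 < T_{\min}(\alpha_C) + \epsilon$.

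The plan begins by applying Theorem \ref{main2} to the contact form $\alpha_C = \lambda_0|_{\partial C}$, regarded as a contact form on $S^{2n-1}$ via the radial diffeomorphism $\partial C \cong S^{2n-1}$. The $C^3$-closeness of $C$ to $B^{2n}$ ensures that this pullback is $C^2$-close to the standard Zoll form $\alpha_0$, so one obtains the normal form $u^*\alpha_C = S \alpha_0 + \eta + \di f$, and Proposition \ref{mainprop1} identifies the shortest closed Reeb orbit $\gamma$ on $\partial C$ as the orbit lying over a minimum of the descended function $\widehat S : \C P^{n-1} \to \R$, with period $T_{\min}(\alpha_C) = (\min \widehat S)\,\pi$. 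In particular, $\gamma$ is a small $C^1$-perturbation of a Hopf fibre in $S^{2n-1}$, and it bounds an embedded symplectic disk in $\R^{2n}$ of area $T_{\min}(\alpha_C)$.

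Next, by a linear symplectic change of coordinates in $\R^{2n}$ composed with a small Hamiltonian isotopy, I would transport $\gamma$ to the round circle $\partial D_0$, where $D_0 \subset \C \times \{0\}$ is the Euclidean disk of area $T_{\min}(\alpha_C)$. Write $\widetilde C$ for the resulting image of $C$, so that $c_{\mathrm{out}}(\widetilde C) = c_{\mathrm{out}}(C)$ and the shortest Reeb orbit of $\alpha_{\widetilde C}$ coincides with $\partial D_0$. The cylindrical embedding is then obtained by extending the normal form of Theorem \ref{main2} to a one-sided neighborhood of $\partial \widetilde C$ inside $\widetilde C$, producing a near-Hopf foliation by symplectic 2-disks whose symplectic area is close to $T_{\min}(\alpha_C)$, and then interpolating radially in a Moser-style manner to push $\widetilde C$ into the cylinder $B^2(R) \times \R^{2n-2}$ with $\pi R^2 < T_{\min}(\alpha_C) + \epsilon$.

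The main obstacle is this last step, namely building a symplectic embedding whose image lies in a cylinder of area arbitrarily close to $T_{\min}(\alpha_C)$. In dimension three, the analogous statement in \cite{abhs18} and \cite{bk21} rests on the existence of a disk-like global surface of section bounded by $\gamma$, a device peculiar to three-manifolds. In higher dimensions a substitute must be built from the $(2n-2)$-dimensional family of near-closed Reeb orbits controlled by Theorem \ref{main2}, and the contact normal form on $\partial \widetilde C$ must be reconciled with the geometric control of $\widetilde C$ in its interior in order to achieve the sharp squeezing into the minimal cylinder; this is where the bulk of the technical work is expected to lie.
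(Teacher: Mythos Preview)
Your setup is on the right track up through transporting the shortest closed Reeb orbit to a round circle, but the plan you sketch for the ``main obstacle'' --- building a near-Hopf foliation by symplectic disks and performing a Moser-style squeezing --- is both unnecessary and the wrong direction. The paper's argument bypasses this difficulty entirely by exploiting convexity, which your proposal never invokes after the initial hypothesis.

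Here is the missing idea. After normalizing so that $T_{\min}(\alpha_C)=\pi$, one moves the shortest Reeb orbit $\zeta$ to the standard circle $\gamma_0(t)=(e^{2\pi i t},0,\dots,0)$ by a global symplectomorphism $\varphi_C:\C^n\to\C^n$ that is $C^2$-close to the identity (the paper builds $\varphi_C$ explicitly via a generating function, Lemma~\ref{spostami}). The $C^2$-closeness is essential: it guarantees that the image $C':=\varphi_C(C)$ is still convex. Now $\gamma_0$ is a closed Reeb orbit on $\partial C'$, so the outward normal to $\partial C'$ at $\gamma_0(t)$ is parallel to $i\gamma_0'(t)\in\C\times\{0\}$, and hence the tangent hyperplane $T_{\gamma_0(t)}\partial C'$ equals $\gamma_0'(t)\R\oplus\C^{n-1}$. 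By convexity, $C'$ lies on the inner side of each of these tangent hyperplanes, which forces $C'\subset B^2\times\C^{n-1}=Z$. This immediately gives $c_{\mathrm{out}}(C)=c_{\mathrm{out}}(C')\leq\pi=T_{\min}(\alpha_{C'})=c_{\mathrm{EHZ}}(C)$.

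So the whole proof is: standardize the minimal orbit by a symplectomorphism small enough to preserve convexity, then let convexity do the squeezing for you. No disk foliation, no interior normal form, no Moser argument is needed. Your acknowledged obstacle disappears because you were trying to produce the cylindrical embedding dynamically rather than geometrically.
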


\begin{mainprop}
\label{symplectic}
There is a $C^3$-neighborhood $\mathscr{B}$ of the unit ball $B^{2n}$ in the space of smooth convex bounded open subsets of $\R^{2n}$ such that if $C$ belongs to $\mathscr{B}$ and $\alpha_C = \lambda_0|_{\partial C}$ is Zoll, then there exists a symplectomorphism of $(\R^{2n},\omega_0)$ mapping a ball onto $C$.
\end{mainprop}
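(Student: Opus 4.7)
The plan is to use Theorem \ref{main1} to produce a scaled strict contactomorphism from a round sphere to $\partial C$, and then to extend it to a global symplectomorphism of $(\R^{2n},\omega_0)$ by the Liouville flow, correcting non-symplecticity near the origin via a Moser argument. First, pulling $\alpha_C$ back by the inverse radial projection $\psi_r\colon S^{2n-1}\to\partial C$ (available since $C$ is star-shaped with respect to the origin) produces a Zoll contact form $\tilde\alpha_C=\psi_r^*\alpha_C$ which is $C^3$-close to $\alpha_0=\lambda_0|_{S^{2n-1}}$ as soon as $C$ is $C^3$-close to $B^{2n}$. The equality case of Theorem \ref{main1} (equivalently, Proposition \ref{rigid-zoll}) then produces a diffeomorphism $u\colon S^{2n-1}\to S^{2n-1}$, $C^2$-close to the identity, with $u^*\tilde\alpha_C=\lambda\alpha_0$, where $\lambda=T/\pi$ and $T$ is the common Reeb period of $\alpha_C$. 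Setting $r:=\sqrt\lambda$ and $w(rx):=\psi_r(u(x))$, one obtains a \emph{strict} contactomorphism $w\colon rS^{2n-1}\to\partial C$, in the sense that $w^*(\lambda_0|_{\partial C})=\lambda_0|_{rS^{2n-1}}$.

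The radial extension $W_0(e^{t/2}z):=e^{t/2}w(z)$ of $w$ along the Liouville flow $\Lambda^t(z)=e^{t/2}z$ is then a symplectomorphism of $\R^{2n}\setminus\{0\}$ sending $rB^{2n}\setminus\{0\}$ onto $C\setminus\{0\}$; this is immediate from $(\Lambda^t)^*\lambda_0=e^t\lambda_0$ together with the strictness of $w$. The core obstacle is that $W_0$ is positively homogeneous of degree one and hence fails to be $C^1$ at $0$ unless $w$ happens to be the restriction of a linear symplectomorphism. To sidestep this, pick $\rho\in(0,r)$ small and, via a smooth radial cutoff, interpolate $W_0$ with the identity to define a global diffeomorphism $W\colon\R^{2n}\to\R^{2n}$ that coincides with $W_0$ outside $\rho B^{2n}$, equals the identity in a smaller ball around $0$, and still satisfies $W(rB^{2n})=C$. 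The map $W$ is now symplectic everywhere except on an annular region near the origin.

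The symplectic defect on $rB^{2n}$ is repaired by a boundary-relative Moser argument. The two forms $\omega_0$ and $\omega_1:=W^*\omega_0$ agree near $\partial(rB^{2n})$ (where $W=W_0$) and near $0$ (where $W=\mathrm{id}$), and have equal integrals
\[
\int_{rB^{2n}}\omega_0^n=n!\,r^{2n}\omega_n=r^{2n}\pi^n=T^n=n!\,\mathrm{vol}(C)=\int_{rB^{2n}}\omega_1^n,
\]
using $r=\sqrt\lambda$, $T=\lambda\pi$, and the identity $T^n=n!\,\mathrm{vol}(C)$, which is dictated by $\rho_{\mathrm{sys}}(\alpha_C)=1$ (the Zoll systolic ratio $1/N$ must equal $1$ under $C^3$-proximity to $B^{2n}$, since it is close to $\rho_{\mathrm{sys}}(\alpha_0)=1$). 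Because $W$ is $C^1$-close to the identity, the family $\omega_t=(1-t)\omega_0+t\omega_1$ stays symplectic, and a primitive $\gamma$ of $\omega_1-\omega_0$ may be chosen to vanish near $\partial(rB^{2n})$; the standard Moser argument then supplies a diffeomorphism $\phi$ of $rB^{2n}$, equal to the identity near the boundary, with $\phi^*\omega_1=\omega_0$. Gluing $W\circ\phi$ on $rB^{2n}$ with $W_0$ outside yields the required symplectomorphism $\Psi$ of $\R^{2n}$ with $\Psi(rB^{2n})=C$. The main technical hurdle throughout is precisely the non-smoothness of the naive radial extension at $0$, overcome by this two-stage Liouville-plus-Moser construction, and the crucial enabler of the Moser step is the volume equality forced by the Zoll hypothesis.
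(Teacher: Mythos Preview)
Your approach is correct and reaches the same conclusion, but it differs from the paper's in the mechanism used to repair the non-smoothness of the radial extension at the origin. Both arguments start identically: pull back $\alpha_C$ to $S^{2n-1}$, invoke Proposition~\ref{rigid-zoll} to obtain a strict contactomorphism $\psi\colon S^{2n-1}\to\partial C$ (after rescaling so that the periods match), and extend $\psi$ positively $1$-homogeneously to a symplectomorphism of $\C^n\setminus\{0\}$. At this point the paper proceeds via \emph{generating functions}: the graph of the $C^1$-small symplectomorphism $\psi$ is a Lagrangian section over $\C^n\setminus\{0\}$, hence the graph of $\di S$ for a positively $2$-homogeneous $S$; multiplying $S$ by a radial cutoff produces a global smooth generating function whose associated symplectomorphism agrees with $\psi$ on $S^{2n-1}$ and is smooth at the origin. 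Your route is instead a cutoff-plus-Moser construction: interpolate the homogeneous extension with the identity near $0$ to get a global diffeomorphism, then run Moser on $rB^{2n}$ relative to the boundary to restore symplecticity.

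A few remarks on your argument. First, the interpolation step (that $W$ is a global diffeomorphism) deserves a sentence: since $\di W_0$ is $0$-homogeneous and $C^0$-close to the identity, the convex combination with $\mathrm{id}$ has invertible differential provided the cutoff varies slowly relative to its support radius; properness then gives global injectivity. Second, and more substantively, the volume equality you emphasize as ``the crucial enabler of the Moser step'' is in fact \emph{not} needed as an input for $n\geq 2$. What makes the Moser argument go through is the existence of a primitive $\gamma$ of $\omega_1-\omega_0$ vanishing near $\partial(rB^{2n})$; since $\omega_1-\omega_0$ is supported in an inner annulus and the outer shell $\{\rho<|z|<r\}$ deformation retracts onto $S^{2n-1}$ with $H^1(S^{2n-1})=0$ for $n\geq 2$, any primitive is exact there and can be corrected to vanish. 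The volume equality is then a \emph{consequence} of the resulting boundary-fixing symplectomorphism, not a hypothesis. (For $n=1$ the outer shell has $H^1\cong\Z$ and the volume equality would indeed be the obstruction, but that case is trivial.) This does not affect correctness, only the emphasis.

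The paper's generating-function route is more explicit and avoids the need for an interpolation diffeomorphism and a relative Moser step; your route uses only standard tools but requires tracking the topology of the complement to justify the choice of primitive.
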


In \cite[Proposition 4.3]{abhs18}, the result of Proposition \ref{symplectic} is proven for $n=2$ in full generality for all starshaped domains $C$. In higher dimension, many of the ingredients of that proof break down and we do not know if the result holds true for all starshaped domains, but we are able to recover it for domains that are $C^3$-close to the ball by a combined use of Moser's homotopy argument and generating functions. The proof of Proposition \ref{EHZ=cyl} also uses generating functions.

These propositions are proven in Section \ref{viterbo-sec} below. Here we show how they can be used to deduce Corollary \ref{cor2} from Theorem \ref{main1}. The contact form 
$\alpha_{B^{2n}} = \lambda_0|_{\partial B^{2n}}$
is Zoll on the sphere $S^{2n-1}= \partial B^{2n}$ with orbits of period $\pi$, contact volume $\pi^n$ and hence systolic ratio 1. The radial projection $S^{2n-1} \rightarrow \partial C$ pulls back the contact form $\alpha_C$ to a contact form $\widetilde{\alpha}_C$ on $S^{2n-1}$ which is $C^k$-close to $\alpha_{B^{2n}}$ when $C$ is $C^k$-close to $B^{2n}$ (see Section \ref{viterbo-sec} below for more about this). We choose the $C^3$-neighborhood $\mathscr{B}$ of $B^{2n}$ in such a way that $\widetilde{\alpha}_C$ belongs to the $C^3$-neighborhood $\mathscr{N}_1$ of the Zoll contact form $\alpha_{B^{2n}}$ from Theorem \ref{main1} and the conclusions of Propositions \ref{EHZ=cyl} and \ref{symplectic} hold for every $C\in \mathscr{B}$. Thanks to the identities (\ref{c=T}) and
\[
\mathrm{vol}(S^{2n-1},\widetilde{\alpha}_C) = \mathrm{vol}(\partial C,\alpha_C) = \mathrm{vol}(C,\omega_0^n),
\]
the inequality
\begin{equation}
\label{dateo1}
T_{\min}(\widetilde{\alpha}_C)^n \leq \mathrm{vol}(S^{2n-1},\widetilde{\alpha}_C)
\end{equation}
which is ensured by Theorem \ref{main1} implies the bound
\[
c_{\mathrm{EHZ}}(C)^n \leq \mathrm{vol}(C,\omega_0^n) \qquad \forall C\in \mathscr{B}.
\]
If $c$ is an arbitrary symplectic capacity, then (\ref{sandwich}) and Proposition \ref{EHZ=cyl} imply 
\[
c(C)^n \leq c_{\mathrm{EHZ}}(C)^n \leq \mathrm{vol}(C,\omega_0^n) \qquad \forall C\in \mathscr{B}.
\]
If $c(C)^n=\mathrm{vol}(C,\omega_0^n)$, then also $c_{\mathrm{EHZ}}(C)^n$ coincides with $\mathrm{vol}(C,\omega_0^n)$, so (\ref{dateo1}) is an equality and by Theorem \ref{main1} the contact form $\alpha_C$ is Zoll. Then Proposition \ref{symplectic} implies that $C$ is symplectomorphic to a ball.

\paragraph{Symplectic non-squeezing in the intermediate dimensions.}
Our last corollary concerns a local generalization to intermediate dimensions of Gromov's non-squeezing theorem \cite{gro85}. Recall that this theorem can be stated in the following way: If $P_V$ is the symplectic linear projection onto a symplectic two-dimensional subspace $V\subset \R^{2n}$ (i.e.\ linear projection along the symplectic orthogonal) and  $\varphi: \R^{2n} \rightarrow \R^{2n}$ is  a symplectomorphism, then
\[
\mathrm{area}(P_V(\varphi(B^{2n})),\omega_0|_V) \geq \pi.
\]
In other words, the two-dimensional shadow of a symplectic ball has a large area, see \cite{eg91}. In \cite{am13} it was shown that higher dimensional shadows of symplectic balls can have arbitrarily small volume: If $P_V$ is the symplectic linear projection onto a symplectic $2k$-dimensional subspace $V\subset \R^{2n}$ with $1<k<n$ and $\epsilon$ is any positive number, then there exists a symplectomorphism $\varphi: \R^{2n} \rightarrow \R^{2n}$ such that
\[
\mathrm{vol}(P_V(\varphi(B^{2n})),\omega_0^k|_V) < \epsilon.
\]
On the other hand, if $\Phi :\R^{2n} \rightarrow \R^{2n}$ is a \textit{linear} symplectomorphism, then the volume of the shadow of the image of the ball $B^{2n}$ by $\Phi$ is given by the identity
\[
\mathrm{vol}(P_V \Phi (B^{2n}),\omega_0^k|_V) = \frac{\pi^k}{w(\Phi^{-1}(V))},
\]
where the function $w$ associates to any $2k$-dimensional real subspace $W\subset \R^{2n}\cong \C^n$ the number
\[
w(W) := \frac{|\omega_0^k[w_1,\dots,w_{2k}]|} {k!\, |w_1 \wedge \dots \wedge w_k|}, \qquad \mbox{with } w_1,\dots,w_{2k} \mbox{ a basis of } W.
\]
By the Wirtinger inequality, $w(W)\leq 1$ and $w(W)= 1$ if and only if $W$ is a complex subspace, so the above identity implies the sharp inequality
\[
\mathrm{vol}(P_V \Phi (B^{2n}),\omega_0^k|_V) \geq \pi^k,
\]
for the linear symplectomorphism $\Phi$ and tells us that equality holds if and only if $\Phi^{-1}(V)$ is a complex subspace. See \cite{am13} and Theorem \ref{linnonsque} below.

In \cite{am13}, some evidence to the conjecture that the above sharp inequality should hold also for nonlinear symplectomorphisms that are close enough to linear ones was given. Thanks to Theorem \ref{main1}, we can confirm this conjecture for $C^3$-closeness.

\begin{maincor}
\label{cor3}
There is a $C^3_{\mathrm{loc}}$-neighborhood $\mathscr{W}$ of the set of linear symplectomorphisms in the space of all smooth symplectomorphisms of $\R^{2n}$ such that the following holds:  If $1\leq k \leq n$ and $P_V$ is the symplectic linear projection onto a symplectic $2k$-dimensional subspace $V\subset \R^{2n}$ then
\[
\mathrm{vol}(P_V(\varphi(B^{2n})),\omega_0^k|_V) \geq \pi^k
\]
for every $\varphi\in \mathscr{W}$.
\end{maincor}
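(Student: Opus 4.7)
Given $\varphi\in\mathscr W$ that is $C^3_{\mathrm{loc}}$-close to a linear symplectomorphism $\Phi$, set $\psi:=\Phi^{-1}\circ\varphi$, so that $\psi$ is $C^3$-close to the identity on a compact neighbourhood of $B^{2n}$. For an arbitrary symplectic $2k$-subspace $V\subset\R^{2n}$, let $V':=\Phi^{-1}(V)$: since $\Phi$ is symplectic one has $V^\perp=\Phi(V'^\perp)$ and hence $P_V\circ\Phi=\Phi\circ P_{V'}$, while $\Phi|_{V'}\colon(V',\omega_0|_{V'})\to(V,\omega_0|_V)$ is a linear symplectic isomorphism of $2k$-dimensional symplectic vector spaces. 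Combining these two facts yields
\[
\mathrm{vol}\bigl(P_V(\varphi(B^{2n})),\,\omega_0^k|_V\bigr) \;=\; \mathrm{vol}\bigl(P_{V'}(\psi(B^{2n})),\,\omega_0^k|_{V'}\bigr),
\]
so it suffices to show that the shadow $\Omega:=P_{V'}(\psi(B^{2n}))$ satisfies $\mathrm{vol}(\Omega,\omega_0^k|_{V'})\geq\pi^k$ whenever $\psi$ is sufficiently $C^3$-close to the identity, with the closeness required being uniform in $V'$.

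\textbf{The capacity chain.} For $\psi$ close enough to the identity, $\psi(B^{2n})$ is smooth and strictly convex, and $\Omega$ is convex as the image of a convex body under a linear projection. I plan to establish the chain
\[
\pi \;=\; c_{\mathrm{EHZ}}\bigl(\psi(B^{2n})\bigr) \;\leq\; c_{\mathrm{EHZ}}\bigl(\Omega\times rB^{2(n-k)}\bigr) \;=\; c_{\mathrm{EHZ}}(\Omega) \;\leq\; \mathrm{vol}\bigl(\Omega,\omega_0^k|_{V'}\bigr)^{1/k},
\]
which on raising to the $k$-th power gives $\pi^k\leq\mathrm{vol}(\Omega)$. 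The first equality is the symplectic invariance of $c_{\mathrm{EHZ}}$ together with $c_{\mathrm{EHZ}}(B^{2n})=\pi$; the second inequality is monotonicity, after choosing $r$ slightly bigger than the diameter of $\psi(B^{2n})$ in the $V'^\perp$-direction so that $\psi(B^{2n})\subset\Omega\times rB^{2(n-k)}$ inside the splitting $\R^{2n}=V'\oplus V'^\perp$; the third equality is the product formula $c_{\mathrm{EHZ}}(A\times B)=\min\{c_{\mathrm{EHZ}}(A),c_{\mathrm{EHZ}}(B)\}$ for convex bodies, which here reduces to $c_{\mathrm{EHZ}}(\Omega)$ because $c_{\mathrm{EHZ}}(rB^{2(n-k)})=\pi r^2$ exceeds $c_{\mathrm{EHZ}}(\Omega)$ for $r$ slightly greater than $1$; the final inequality is the sharp local Viterbo bound from Corollary~\ref{cor2}, applied in the $2k$-dimensional symplectic space $(V',\omega_0|_{V'})$ to the convex body~$\Omega$.

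\textbf{Uniformity in $V'$ and main obstacle.} For Corollary~\ref{cor2} to apply, $\Omega$ must be $C^3$-close to the unit ball $B^{2k}$ of $V'$. I will split the argument according to the Wirtinger value $w(V')\in(0,1]$, which measures how far $V'$ is from being a complex subspace. When $w(V')\geq 1-\epsilon_0$ the linear shadow $E_{V'}:=P_{V'}(B^{2n})$ is an ellipsoid close to $B^{2k}$, so $\Omega$ is $C^3$-close to $B^{2k}$ for $\psi$ close to the identity and the capacity chain applies directly. When $w(V')<1-\epsilon_0$ one has the strictly positive linear gap $\pi^k/w(V')-\pi^k$, which grows without bound as $w(V')\to 0$; a continuity estimate controlling $|\mathrm{vol}(\Omega)-\mathrm{vol}(E_{V'})|$ by the Hausdorff distance between $\Omega$ and $E_{V'}$ (bounded in turn by $\|P_{V'}\|\cdot\|\psi-\mathrm{id}\|_{C^0}$) yields $\mathrm{vol}(\Omega)\geq\pi^k$ with a requirement on $\|\psi-\mathrm{id}\|_{C^0}$ that is uniform in $V'$ in this regime, thanks to the balanced growth of the gap and of $\|P_{V'}\|$ as $V'$ approaches a Lagrangian subspace. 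The principal obstacle is precisely this uniformity across the non-compact symplectic Grassmannian, together with the verification of the product formula for $c_{\mathrm{EHZ}}$ on the product $\Omega\times rB^{2(n-k)}$ with bounded $r$. Taking the union of the resulting $C^3_{\mathrm{loc}}$-neighbourhoods over all linear symplectomorphisms $\Phi$ produces the desired neighbourhood~$\mathscr W$.
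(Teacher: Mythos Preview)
Your overall architecture matches the paper's: use Corollary~\ref{cor2} (the local Viterbo bound) when the projection direction is close to complex, and fall back on the linear inequality plus a perturbation estimate elsewhere. The reduction to $\psi=\Phi^{-1}\varphi$ close to the identity is a clean simplification the paper does not make explicitly. Two points, however, deserve correction or sharpening.

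\emph{The product formula is an unnecessary detour.} Your capacity chain relies on $c_{\mathrm{EHZ}}(\Omega\times rB^{2(n-k)})=\min\{c_{\mathrm{EHZ}}(\Omega),\pi r^2\}$, which is a nontrivial theorem in its own right, and its application here is delicate: the Euclidean ball in $V'^\perp$ need not be a symplectic ball when $V'^\perp$ is not complex, and the boundary of the product is not smooth. The paper avoids all of this by invoking directly the monotonicity of $c_{\mathrm{EHZ}}$ under linear symplectic projection of convex bodies, $c_{\mathrm{EHZ}}(P_{V'}(C))\geq c_{\mathrm{EHZ}}(C)$, which follows from the Clarke-dual characterisation (see \cite[Theorem~4.1~(v)]{ama15}). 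This gives $c_{\mathrm{EHZ}}(\Omega)\geq c_{\mathrm{EHZ}}(\psi(B^{2n}))=\pi$ in one line and removes one of your two stated obstacles.

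\emph{The ``balanced growth'' estimate should be rephrased as a containment.} Controlling $|\mathrm{vol}(\Omega)-\mathrm{vol}(E_{V'})|$ by the Hausdorff distance times a surface-area factor is awkward, since the Euclidean surface area of $E_{V'}$ blows up as $w(V')\to 0$ and one has to track the compensating factor $w(V')$ coming from the ratio of $\omega_0^k|_{V'}$ to the Euclidean volume form. The paper's device is cleaner and is implicit in what you wrote: if $\|\psi-\mathrm{id}\|_{C^0}$ is small enough that $\psi(B^{2n})\supset(1-\epsilon)B^{2n}$, then $\Omega\supset(1-\epsilon)E_{V'}$ and hence
\[
\mathrm{vol}(\Omega,\omega_0^k|_{V'})\;\geq\;(1-\epsilon)^{2k}\,\mathrm{vol}(E_{V'},\omega_0^k|_{V'})\;=\;\frac{(1-\epsilon)^{2k}\pi^k}{w(V')}\;>\;\frac{(1-\epsilon)^{2k}\pi^k}{1-\epsilon_0},
\]
which is $\geq\pi^k$ once $(1-\epsilon)^{2k}\geq 1-\epsilon_0$. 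So choose $\epsilon_0$ first (so that on $\{w(V')\geq 1-\epsilon_0\}$, which is compact, the transported shadow $\Psi^{-1}(\Omega)$ lands in the fixed neighborhood $\mathscr{B}$ of Corollary~\ref{cor2} for $\psi$ sufficiently $C^3$-small; this step needs the compactness argument with finitely many transporting isomorphisms $\Psi$, as in the paper), and then choose $\epsilon$ small enough that $(1-\epsilon)^{2k}\geq 1-\epsilon_0$. The paper organises the same ideas into a three-region split (a neighborhood of the complex Grassmannian, a compact annulus handled by continuity, and the coercive region handled by containment), but your two-region version works once the order of quantifiers is made explicit as above.
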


For $k=2$, a slightly weaker version of this result was proven in \cite{abhs18} (there, the order of quantifiers is different, and the neighborhood $\mathscr{W}$ depends on the choice of the linear symplectic subspace $V$). In the analytic category, a related result for arbitrary $k$ is proven in \cite{rig19}. 

It is interesting to observe that, in contrast to the above result, other inequalities of a similar flavor are known to fail in the intermediate dimensions, even locally. For instance, Gromov studied the higher homological systoles of metrics on $\CP^n$ having the same volume as the Fubini-Study metric $g_0$ and showed that the 2-systole of $\CP^2$ is locally maximized by $g_0$ (and all its quasi-K\"ahler deformations), whereas for $2\leq k \leq n-1$ there are metrics on $\CP^n$ that are arbitrary close to $g_0$ and have a strictly larger $2k$-systole. See \cite[Section 4]{gro96}.

Corollary \ref{cor3} is proven in Section \ref{shadowssec} below. Here we wish to remark that the validity of the Viterbo conjecture for the Ekeland--Hofer--Zehnder capacity would imply the conclusion of Corollary \ref{cor3} for all symplectomorphisms $\varphi$ such that $\varphi(B^{2n})$ is convex. Indeed, this follows from the fact that the Ekeland--Hofer--Zehnder capacity of the image of a convex domain $C\subset \R^{2n}$ with respect to the linear symplectic projection $P_V$ is not smaller than the Ekeland--Hofer--Zehnder capacity of $C$:
\[
c_{\mathrm{EHZ}}(P_V(C)) \geq c_{\mathrm{EHZ}}(C),
\]
where the capacity on the left-hand side is acting on subsets of the symplectic vector space $(V,\omega_0|_V)$. The above inequality follows from the characterization of the Ekeland--Hofer--Zehnder capacity via Clarke duality, see e.g.\ \cite[Theorem 4.1 (v)]{ama15}.

\paragraph{Acknowledgements.}
We would like to thank Viktor Ginzburg for making us aware of Bottkol's theorem and for several discussions about it. Our gratitude goes also to Barney Bramham, Umberto Hryniewicz, Jungsoo Kang and Pedro Salom\~ao, with whom we have discussed the topic of this paper so many times that their contribution goes surely beyond what we even realize. 

A.\ Abbondandolo is partially supported by the Deutsche Forschungsgemeinschaft under the Collaborative Research Center SFB/TRR 191 - 281071066 (Symplectic Structures in Geometry, Algebra and Dynamics). G.\ Benedetti is
partially supported by the Deutsche Forschungsgemeinschaft 
under Germany's Excellence Strategy
EXC2181/1 - 390900948 (the Heidelberg STRUCTURES Excellence Cluster), the Collaborative Research Center SFB/TRR 191 - 281071066 (Symplectic Structures in Geometry, Algebra and Dynamics), and the Research Training Group RTG 2229 - 281869850 (Asymptotic Invariants and Limits of Groups and Spaces).

\numberwithin{equation}{section}

\section{A few facts about differential forms}

In this section, we fix some notation and we discuss some results about differential forms that will be used in the proof of the normal form of Theorem \ref{main2}. 

We denote by $\Lambda^kM$ the vector bundle of alternating $k$-forms on the manifold $M$ and by $\Omega^k(M)$ the space of smooth sections of this bundle, i.e.\ differential $k$-forms on $M$. The vector bundle $\Lambda^1M$ is the cotangent bundle $T^*M$.

The $C^k$-norms of differential forms on $M$ are induced by the choice of some arbitrary but fixed Riemannian metric on $M$. When estimating such norms, we will use the symbol ``$\lesssim$'' to mean ``less or equal up to a multiplicative constant depending on $k$''.

Alternatively, bounds will be given in terms of moduli of continuity. By modulus of continuity we mean here a monotonically increasing continuous function 
\[
\omega:[0,+\infty) \rightarrow [0,+\infty)
\]
such that $\omega(0)=0$. Giving bounds in terms of moduli of continuity has the advantage that we can conclude the smallness of the output from the smallness of the input and the boundedness of the output from the boundedness of the input at the same time.

The first lemma allows us to bound the pullback of differential forms. Its proof is standard and is contained in Appendix \ref{appbound}.
\begin{lem}
	\label{boundsonforms}
	Let $M$ be a closed Riemannian manifold of dimension $d$. Then there exists a positive number $r>0$ such that for every smooth map $u: M \rightarrow M$ with the property that
	\begin{equation}
	\label{rvicini}
	\mathrm{dist}_{C^0}(u,\mathrm{id}) \leq r, 
	\end{equation}
	and for every $\alpha\in \Omega^j(M)$, $0\leq j \leq d$, the following bounds hold:
	\begin{equation}
	\label{bdof1}
	\|u^* \alpha\|_{C^k} \lesssim  \|\alpha\|_{C^k} \|\di u\|_{C^k}^j ( 1 + \|\di u\|_{C^{k-1}}^{k}), 
	\end{equation}
	\begin{equation}
	\label{bdof2}
	\|u^* \alpha - \alpha\|_{C^k} \lesssim \|\alpha\|_{C^{k+1}} \mathrm{dist}_{C^{k+1}}(u,\mathrm{id}) ( 1 + \|\di u\|_{C^k}^{k+j}),
	\end{equation}
	for every integer $k\geq 0$, where for $k=0$ the term $\|\di u\|_{C^{k-1}}$ in (\ref{bdof1}) is set to be zero.
\end{lem}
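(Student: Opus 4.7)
The plan is to derive both inequalities from the pointwise formula
$$(u^*\alpha)_x(X_1,\ldots,X_j) \;=\; \alpha_{u(x)}\bigl(\di u(x)X_1,\ldots,\di u(x)X_j\bigr)$$
and the fundamental theorem of calculus along a geodesic interpolation between $\mathrm{id}$ and $u$. Fix $r>0$ strictly smaller than the injectivity radius of the fixed background Riemannian metric. Any smooth $u:M\to M$ satisfying \eqref{rvicini} can then be written uniquely as $u(x)=\exp_x(V(x))$ for a smooth vector field $V$, and compactness of $M$ combined with smoothness of the exponential map ensures that $\|V\|_{C^\ell}$ and $\mathrm{dist}_{C^\ell}(u,\mathrm{id})$ are comparable up to a polynomial in $\|\di u\|_{C^{\ell-1}}$, for every integer $\ell\geq 1$. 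All estimates below will be done in finitely many coordinate charts, with the compactness of $M$ allowing us to absorb transition maps and the dimension $d$ into the implied constants.

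For \eqref{bdof1}, apply the Leibniz rule to $k$ derivatives of the displayed pointwise formula, distributing $k_0+k_1+\cdots+k_j=k$ derivatives between the composition $\alpha\circ u$ and the $j$ factors of $\di u$. Differentiating $\alpha\circ u$ exactly $k_0$ times is controlled by Faà di Bruno's formula: the sum over partitions of $k_0$ into at most $k_0$ blocks produces a bound
$$\bigl\|D^{k_0}(\alpha\circ u)\bigr\| \;\lesssim\; \|\alpha\|_{C^{k_0}}\,\bigl(1+\|\di u\|_{C^{k_0-1}}^{k_0}\bigr),$$
where the exponent $k_0$ on $\|\di u\|_{C^{k_0-1}}$ is the maximal number of blocks in such a partition. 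Each of the $j$ factors of $\di u$ receiving $k_\ell\leq k$ derivatives contributes $\|\di u\|_{C^{k_\ell}}\leq \|\di u\|_{C^k}$, and their product is bounded by $\|\di u\|_{C^k}^j$. Summing over the admissible distributions of derivatives and using $k_0\leq k$ yields \eqref{bdof1}.

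For \eqref{bdof2}, interpolate via $u_t(x):=\exp_x(tV(x))$ for $t\in[0,1]$, so that $u_0=\mathrm{id}$ and $u_1=u$. If $Y_t$ is the time-dependent vector field generating $u_t$, then by the fundamental theorem of calculus
$$u^*\alpha-\alpha \;=\; \int_0^1 \frac{\di}{\di t}u_t^*\alpha\,\di t \;=\; \int_0^1 u_t^*\bigl(\mathcal{L}_{Y_t}\alpha\bigr)\,\di t.$$
Cartan's formula $\mathcal{L}_{Y_t}\alpha=\di\imath_{Y_t}\alpha+\imath_{Y_t}\di\alpha$ together with the Leibniz rule for contraction gives $\|\mathcal{L}_{Y_t}\alpha\|_{C^k}\lesssim \|\alpha\|_{C^{k+1}}\,\|Y_t\|_{C^{k+1}}$, while smooth dependence of $\exp$ combined with the comparability principle from the first paragraph gives $\|Y_t\|_{C^{k+1}}\lesssim \mathrm{dist}_{C^{k+1}}(u,\mathrm{id})\,\bigl(1+\|\di u\|_{C^k}^{k}\bigr)$. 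Since $\mathcal{L}_{Y_t}\alpha$ is again a $j$-form, applying \eqref{bdof1} to $u_t^*(\mathcal{L}_{Y_t}\alpha)$ produces an additional factor $\|\di u_t\|_{C^k}^j\bigl(1+\|\di u_t\|_{C^{k-1}}^k\bigr)$; since $\di u_t$ depends smoothly on $(t,V,\di V)$, this is bounded by $1+\|\di u\|_{C^k}^{k+j}$ up to a constant. Combining the three estimates and integrating over $t\in[0,1]$ delivers \eqref{bdof2}.

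The main obstacle is the combinatorial bookkeeping in the Faà di Bruno expansion of $D^{k_0}(\alpha\circ u)$, and specifically ensuring that the exponents appearing in the output match those stated in \eqref{bdof1} and \eqref{bdof2}: the $j$ in $\|\di u\|_{C^k}^j$ comes from the multilinearity of pullback on $j$-forms, the $k$ in $\|\di u\|_{C^{k-1}}^k$ from the maximal number of blocks in a partition of $k$, and the combined exponent $k+j$ in \eqref{bdof2} from adding the extra power contributed by the generator $Y_t$. Once these three exponents are tracked, the remainder of the proof is a routine application of Leibniz, Faà di Bruno, compactness of $M$, and standard smooth dependence properties of the exponential map.
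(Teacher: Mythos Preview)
Your approach via global geodesic interpolation and the Lie-derivative identity $\frac{\di}{\di t}u_t^*\alpha = u_t^*(\mathcal{L}_{Y_t}\alpha)$ is genuinely different from the paper's, which instead localizes to coordinate charts via a partition of unity and reduces both estimates to statements about maps $v:B'\to\R^d$ and compactly supported forms on $\R^d$. There the case $j=0$ of \eqref{bdof1} is proved by induction on $k$ using the chain rule, and higher $j$ by expanding elementary forms $f\,\di x_I$; for \eqref{bdof2} the interpolation is simply the straight line $tv(x)+(1-t)x$, yielding the identity $\beta(v(x))-\beta(x)=\sum_i(v_i(x)-x_i)\int_0^1\partial_{x_i}\beta(tv(x)+(1-t)x)\,\di t$ for functions, and then an explicit expansion of $v^*(f\,\di x_I)-f\,\di x_I$ for elementary $j$-forms. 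The paper's route is more pedestrian but makes the exponents $k$ and $k+j$ appear transparently, without ever invoking $\exp$ or Faà di Bruno in full generality.

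There is, however, a genuine gap in your argument for \eqref{bdof2}: the generating field $Y_t$ is determined by $Y_t(u_t(x))=\partial_t u_t(x)$, and this only defines a vector field on $M$ if $u_t$ is a diffeomorphism. The hypothesis $\mathrm{dist}_{C^0}(u,\mathrm{id})\leq r$ gives no control whatsoever on $\di u$, so $u$---and hence $u_t$ for $t$ near $1$---need not be injective or even a local diffeomorphism, and then neither $Y_t$ nor your bound $\|Y_t\|_{C^{k+1}}\lesssim\mathrm{dist}_{C^{k+1}}(u,\mathrm{id})(1+\|\di u\|_{C^k}^k)$ makes sense. The fix is to bypass $Y_t$: compute $\frac{\di}{\di t}(u_t^*\alpha)$ directly as a sum of one term involving $(\nabla\alpha)\circ u_t$ contracted with $\partial_t u_t$ and $j$ terms involving $\alpha\circ u_t$ contracted with one copy of $\partial_t(\di u_t)$ and $j-1$ copies of $\di u_t$. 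These are all well defined regardless of invertibility, and the linear factor $\mathrm{dist}_{C^{k+1}}(u,\mathrm{id})$ then enters via $\|\partial_t u_t\|_{C^k}$ or $\|\partial_t(\di u_t)\|_{C^k}$, both controlled by $\|V\|_{C^{k+1}}$. After this correction your exponent bookkeeping can be made to work, though you must also check that $\|\di u_t\|_{C^k}\lesssim 1+\|\di u\|_{C^k}$ uniformly in $t$, which is itself a Faà di Bruno exercise on $\exp$; this is precisely the extra labor that the paper's coordinate-chart reduction is designed to avoid.
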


The second lemma allows us to bound the distance of two Reeb vector fields in terms of the corresponding contact forms. The proof is given in Appendix \ref{appbound}.

\begin{lem}
\label{l:estreeb}
Let $M$ be a closed manifold of dimension $2n-1$ with contact form $\alpha_0$. Then there exists $\delta>0$ and a sequence of moduli of continuity $\omega_k$ such that	
\begin{equation*}
	\Vert R_{\alpha}-R_{\alpha_0}\Vert_{C^k}\leq \omega_k\big(\Vert\alpha-\alpha_0\Vert_{C^{k+1}}\big)\qquad\forall\,k\geq0,
	\end{equation*}
for every contact form $\alpha$ on $M$ such that $\|\alpha-\alpha_0\|_{C^1}<\delta$.
\end{lem}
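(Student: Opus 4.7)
The plan is to express $R_\alpha$ explicitly as a smooth function of the pair $(\alpha,\di\alpha)$, evaluated pointwise, and then deduce the desired bound from standard composition estimates for $C^k$ norms.

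First, I would fix the splitting $TM = \R R_{\alpha_0} \oplus \xi_0$, where $\xi_0 := \ker \alpha_0$, and write any candidate Reeb vector field $R$ in the form $R = g R_{\alpha_0} + Y$ with $g$ a function and $Y$ a section of $\xi_0$. The defining conditions $\imath_R \alpha = 1$ and $\imath_R \di\alpha = 0$ then translate into the system
\begin{align*}
g\, \alpha(R_{\alpha_0}) + \alpha(Y) &= 1, \\
g\, \di\alpha(R_{\alpha_0}, \cdot)\big|_{\xi_0} + \di\alpha(Y, \cdot)\big|_{\xi_0} &= 0.
\end{align*}
For $\|\alpha - \alpha_0\|_{C^1}$ small enough, the two-form $\di\alpha|_{\xi_0 \times \xi_0}$ is $C^0$-close to the non-degenerate $\di\alpha_0|_{\xi_0 \times \xi_0}$, hence itself non-degenerate with uniformly bounded inverse. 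Solving the second equation for $Y$ in terms of $g$, and substituting into the first, yields pointwise algebraic formulas for $g$ and $Y$ as smooth (in fact, rational) functions of the jet $(\alpha,\di\alpha)$, with $g=1$, $Y=0$ at $\alpha = \alpha_0$.

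Next I would apply the standard chain-rule (Fa\`a di Bruno) estimates: for a smooth function $\Phi$ of its arguments, $\|\Phi(\alpha,\di\alpha) - \Phi(\alpha_0,\di\alpha_0)\|_{C^k}$ is controlled by a modulus of continuity of the pair $(\|\alpha-\alpha_0\|_{C^k}, \|\di\alpha-\di\alpha_0\|_{C^k})$, provided one has a uniform $C^k$ bound on the arguments. Since $\|\di\alpha-\di\alpha_0\|_{C^k} \leq \|\alpha-\alpha_0\|_{C^{k+1}}$, this gives the desired modulus $\omega_k$ of $\|\alpha-\alpha_0\|_{C^{k+1}}$. Uniform boundedness of the arguments follows from the $C^1$-closeness hypothesis together with the $C^{k+1}$-bound appearing in the argument of $\omega_k$.

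The main obstacle is essentially bookkeeping: one must verify that the pointwise matrix inversion used to extract $g$ and $Y$ from the linear system behaves well under $C^k$ composition, and that all constants can be taken uniform over a small $C^1$-ball around $\alpha_0$. This rests on the fact that inversion is a smooth operation on a neighborhood of an invertible matrix, combined with a uniform lower bound on the determinant of $\di\alpha|_{\xi_0\times\xi_0}$ coming from the $C^1$-closeness. Since we only need the existence of some modulus of continuity (and not an explicit one), this step is not delicate but does require care to avoid circular dependencies among the constants and to match the loss-of-one-derivative from $\alpha$ to $R_\alpha$ recorded in the statement.
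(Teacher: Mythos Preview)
Your approach is correct and complete in outline; the linear system you set up does determine $R_\alpha$ uniquely (the kernel of $\di\alpha(\cdot,\cdot)|_{TM\times\xi_0}$ is one-dimensional for $\alpha$ close to $\alpha_0$ and coincides with the full kernel of $\di\alpha$), and the composition estimates you invoke are standard.

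The paper's proof takes a different and somewhat slicker route. Rather than splitting $TM$ along $\xi_0$ and inverting the $(2n-2)\times(2n-2)$ system for $\di\alpha|_{\xi_0\times\xi_0}$, it uses the fixed volume form $\alpha_0\wedge\di\alpha_0^{n-1}$ to identify $\Lambda^{2n-2}M$ with $TM$ via $J^{-1}(X)=\imath_X(\alpha_0\wedge\di\alpha_0^{n-1})$, and then observes that the vector field $K(\di\alpha):=J\bigl((\di\alpha)^{n-1}\bigr)$ is automatically parallel to $R_\alpha$, yielding the closed formula
\[
R_\alpha=\frac{K(\di\alpha)}{\alpha\bigl(K(\di\alpha)\bigr)}.
\]
This expresses $R_\alpha$ as a polynomial in $\di\alpha$ followed by division by a single scalar function, so the only inversion needed is of the scalar $\alpha(K(\di\alpha))$, which is close to $1$. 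Your approach trades this for a genuine matrix inversion on $\xi_0$; both are smooth operations with the same loss of one derivative, but the paper's formula makes the estimates slightly more transparent and entirely coordinate-free. Your approach, on the other hand, is arguably more direct and does not require noticing the identity $\imath_{K(\di\alpha)}(\alpha\wedge\di\alpha^{n-1})=f\,\di\alpha^{n-1}$.
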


The last lemma of this section is a splitting result for one-forms on $M$ whose integrals over the Reeb orbits of a Zoll contact form vanish.

\begin{lem}
	\label{splitting}
	Let $\alpha_0$ be a Zoll contact form on $M$ with associated $S^1$-bundle $\pi: M \rightarrow B$.
	Let $\beta$ be a one-form on $M$ such that
	\[
	\int_{\pi^{-1}(b)} \beta = 0 \qquad \forall\, b\in B.
	\]
	Then $\beta$ splits uniquely as
	\begin{equation}
	\label{splid}
	\beta=\eta + \di f,
	\end{equation}
	where $\eta\in \Omega^1(M)$ satisfies $\imath_{R_{\alpha_0}} \eta=0$ and $f\in \Omega^0(M)$ has average zero along each orbit of $R_{\alpha_0}$. Moreover, for every integer $k\geq 0$ the following bounds hold:
	\[
	\|\eta\|_{C^k} \lesssim  \|\beta\|_{C^k} + \|\di\imath_{R_{\alpha_0}} \beta\|_{C^k}, \qquad \|f\|_{C^{k+1}} \lesssim \|\imath_{R_{\alpha_0}} \beta\|_{C^k} + \|\di \imath_{R_{\alpha_0}} \beta\|_{C^k}. 
	\]
\end{lem}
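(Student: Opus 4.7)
Set $g:=\imath_{R_{\alpha_0}}\beta$. Parametrising the Reeb orbit through $x\in M$ by $t\mapsto\phi_{\alpha_0}^t(x)$, the hypothesis $\int_{\pi^{-1}(b)}\beta=0$ translates into $\overline{g}\equiv 0$ on $M$. My plan is therefore to find a smooth function $f$ with $\imath_{R_{\alpha_0}}\di f = g$ and $\overline{f}=0$, and then to define $\eta:=\beta-\di f$: the identity $\imath_{R_{\alpha_0}}\eta = g-g = 0$ then holds by construction. Uniqueness of the splitting (\ref{splid}) is also forced by these choices, since two splittings would differ by $\di(f-f')=\eta'-\eta$, whose contraction with $R_{\alpha_0}$ vanishes on both sides; hence $f-f'$ is constant along each Reeb orbit, and having zero orbit-average it must vanish.

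To produce $f$, I would use the averaged primitive along the Reeb flow, namely
\[
f(x):=\frac{1}{T_0}\int_0^{T_0} s\,g\bigl(\phi_{\alpha_0}^s(x)\bigr)\,\di s.
\]
Differentiating $u\mapsto f(\phi_{\alpha_0}^u(x))$ at $u=0$ and using $\phi_{\alpha_0}^{T_0}=\mathrm{id}$ together with $\overline{g}=0$ gives $\imath_{R_{\alpha_0}}\di f = g$, while exchanging the order of integration in the orbit-average of $f$ and invoking once more $\overline g=0$ gives $\overline{f}=0$.

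For the estimates, the key observation is that $\phi_{\alpha_0}^s$ is smooth with $C^k$-norms bounded uniformly in $s\in[0,T_0]$. The defining formula immediately yields $\|f\|_{C^0}\lesssim\|g\|_{C^0}$, while differentiation under the integral sign gives
\[
\di f_x = \frac{1}{T_0}\int_0^{T_0} s\,(\phi_{\alpha_0}^s)^*(\di g)_x\,\di s.
\]
Standard pullback bounds of the type of Lemma \ref{boundsonforms}, applied fibrewise to the smooth family $\phi_{\alpha_0}^s$, then yield $\|\di f\|_{C^k}\lesssim \|\di g\|_{C^k}$. Combining the two inequalities gives $\|f\|_{C^{k+1}}\lesssim \|g\|_{C^k}+\|\di g\|_{C^k}=\|\imath_{R_{\alpha_0}}\beta\|_{C^k}+\|\di\imath_{R_{\alpha_0}}\beta\|_{C^k}$, and then $\eta=\beta-\di f$ obeys $\|\eta\|_{C^k}\leq\|\beta\|_{C^k}+\|\di f\|_{C^k}$, which is the claimed bound on $\eta$. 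I do not expect any real obstacle: the whole argument reduces to the solvability of the homological equation $\imath_{R_{\alpha_0}}\di f=g$ on a manifold fibered by closed $R_{\alpha_0}$-orbits, which is encoded precisely by the vanishing of $\overline{g}$, and the estimates come for free from the single explicit integral formula above.
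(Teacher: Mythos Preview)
Your argument is correct, and the route is genuinely different from the paper's. The paper constructs $f$ by choosing a partition of unity $\{\rho_j\}$ on $B$ subordinate to trivialising charts $B_j\times \R/T_0\Z$, setting $h_j:=(\rho_j\circ\pi)\,g$, and defining on each chart
\[
f_j(b,\theta)=\int_0^{\theta} h_j(b,\vartheta)\,\di\vartheta+\frac{1}{T_0}\int_0^{T_0}\vartheta\,h_j(b,\vartheta)\,\di\vartheta,
\]
then summing. The local primitive $\int_0^{\theta}h_j$ depends on the choice of origin in the fibre, which is why the partition of unity is needed to globalise. Your formula $f(x)=\tfrac{1}{T_0}\int_0^{T_0}s\,g(\phi_{\alpha_0}^s(x))\,\di s$ sidesteps this entirely: it is manifestly global because it only uses the flow, and the uniqueness of the splitting guarantees it produces the same $f$ as the paper. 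Your estimate is also a little cleaner, since you read off $\|f\|_{C^0}\lesssim\|g\|_{C^0}$ and $\|\di f\|_{C^k}\lesssim\|\di g\|_{C^k}$ directly from the single formula, whereas the paper first bounds $\|f_j\|_{C^{k+1}}$ by $\|h_j\|_{C^{k+1}}$ and then uses the product structure of $h_j$ to drop one derivative. One small remark: Lemma~\ref{boundsonforms} as stated assumes the diffeomorphism is $C^0$-close to the identity, which $\phi_{\alpha_0}^s$ is not for general $s$; what you actually need is the elementary fact that $\|(\phi_{\alpha_0}^s)^*\di g\|_{C^k}\lesssim\|\di g\|_{C^k}$ with a constant depending only on the $C^{k+1}$-data of the fixed flow, uniformly over $s\in[0,T_0]$.
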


\begin{proof}
If 
\[
\beta = \eta + \di f = \eta'+ \di f'
\]
are two splittings as above, then 
\[
\imath_{R_{\alpha_0}} \di (f'-f) = \imath_{R_{\alpha_0}} (\eta-\eta') =0,
\]
so $f'-f$ is constant on each orbit of $R_{\alpha_0}$ and by the zero average assumption $f'-f=0$. This proves the uniqueness of the splitting.

We now prove its existence and the bounds on $\eta$ and $f$. By assumption, the function $h:= \imath_{R_{\alpha_0}} \beta$ has average zero along each orbit of $R_{\alpha_0}$. This implies the existence of a function $f\in \Omega^0(M)$ having average zero along every orbit of $R_{\alpha_0}$ and such that $\imath_{R_{\alpha_0}} \di f = h$. This claim can be proven in the following way. Let $\{\rho_j\}_{j=1,\dots,N}$ be a smooth partition of unity on $B$, where each $\rho_j$ is supported in an open set $B_j$ that is a trivializing domain for the $S^1$-bundle $\pi$. Then the function $h_j := (\rho_j\circ \pi) h$ is supported in $\pi^{-1}(B_j)$. We identify $\pi^{-1}(B_j)$ with $B_j \times \R/T_0 \Z$ in such a way that $R_{\alpha_0}$ is identified with the vector field $\partial_{\theta}$, $T_0$ denoting the minimal period of the orbits of $R_{\alpha_0}$ and $\theta$ being the variable in $\R/T_0 \Z$. Then the assumption on $h$ implies 
	\[
	\int_0^{T_0} h_j(b,\theta) \, d\theta = \rho_j(b) \int_0^{T_0} h(b,\theta) \, d\theta = 0 \qquad \forall\, b\in B_j.
	\]
	Now the formula
	\begin{equation}
	\label{primitive}
	f_j(b,\theta) := \int_0^{\theta} h_j(b,\vartheta)\, d\vartheta + \frac{1}{T_0} \int_0^{T_0} \vartheta\,h_j(b,\vartheta)\, d\vartheta, \qquad \forall (b,\theta)\in B_j \times \R/T_0 \Z,
	\end{equation}
	defines a smooth function $f_j$ on $M$ that is supported in $\pi^{-1}(B_j)$, has average zero on every orbit of $R_{\alpha_0}$ and satisfies
	\[
	\imath_{R_{\alpha_0}} \di f_j = \frac{\partial f_j}{\partial \theta} = h_j.
	\]
	Since the sum of the functions $h_j$ is $h$, we see that the function $f:= \sum_{j=1}^N f_j$, which has average zero on every orbit of $R_{\alpha_0}$, satisfies
	\[
	\imath_{R_{\alpha_0}} \di f = h =  \imath_{R_{\alpha_0}} \beta,
	\]
	proving our claim. As a consequence, the one-form $\eta:= \beta-\di f$ satisfies the desired relation
	\[
	\imath_{R_{\alpha_0}} \eta = \imath_{R_{\alpha_0}} \beta -  \imath_{R_{\alpha_0}} \di f = 0.
	\]
	To prove the bounds on $\eta$ and $f$ let $k\geq 0$ be an integer. By differentiating (\ref{primitive}) we get
	\[
	\|f_j\|_{C^{k+1}} \leq  \Bigl( 1 + \frac32T_0\Bigr)  \|h_j\|_{C^{k+1}}.
	\]
	Together with the identities
	\[
	\di h_j = \di(\rho_j\circ \pi) \imath_{R_{\alpha_0}} \beta +(\rho_j\circ \pi) \di \imath_{R_{\alpha_0}} \beta,
	\]
	we deduce the bound
	\[
	\|f\|_{C^{k+1}} \leq \sum_{j=1}^N \|f_j\|_{C^{k+1}} \lesssim \| \imath_{R_{\alpha_0}} \beta \|_{C^k} + \|\di \imath_{R_{\alpha_0}} \beta\|_{C^k}.
	\]
	By the definition of $\eta$ and the above bound, we have
	\[
	\|\eta\|_{C^k} = \|\beta-\di f\|_{C^k} \leq \|\beta\|_{C^k} + \|\di f\|_{C^k} \lesssim \|\beta\|_{C^k} + \| \imath_{R_{\alpha_0}} \beta \|_{C^k} + \|\di \imath_{R_{\alpha_0}} \beta\|_{C^k}.
	\]
	Since the $C^k$-norm of $\imath_{R_{\alpha_0}} \beta$ can be bounded by the $C^k$-norm of $\beta$, the above inequality implies the bound
	\[
	\|\eta\|_{C^k} \lesssim \|\beta\|_{C^k} + \|\di\imath_{R_{\alpha_0}} \beta\|_{C^k},
	\]
	which concludes the proof.
\end{proof}

\section{Normal form for contact forms close to a Zoll one}
\label{normalsec}

In \cite{bot80}, Bottkol constructed a normal form for vector fields $X$ on a manifold $M$ which are close to a vector field $X_0$ having a submanifold of periodic orbits with the same minimal period and satisfying a suitable non-degeneracy assumption. In the proof of Theorem \ref{main2}, we shall use the following version of Bottkol's theorem concerning the case in which  the manifold of periodic orbits of $X_0$ is the whole $M$.

\begin{thm}
	\label{tbot}
Let $M$ be a closed manifold and $X_0$ a vector field on $M$ all of whose orbits are periodic and with the same minimal period $T_0$. Then there exists $\delta>0$ such that for every vector field $X$ on $M$ with $\|X-X_0\|_{C^1} < \delta$ there is a diffeomorphism $u:M\to M$, a smooth vector field $V$ on $M$, a smooth function $h: M \rightarrow \R$, and a linear automorphism $\mathscr Q:TM\to TM$ lifting the identity such that:
	\begin{enumerate}[(a)]
		\item $h\,u^* X = X_0 - \mathscr{Q} [V]$;
		\item $\mathcal{L}_{X_0} V=0$;
		\item $g(V,X_0)=0$;
		\item $\mathcal{L}_{X_0} h=0$.
	
	\end{enumerate}
Moreover, for every $k\geq 0$, there is a modulus of continuity $\omega_k$ such that
\begin{equation}
\label{lestime}
\begin{split}
		\max \Big\{ \mathrm{dist}_{C^{k+1}}(u,\mathrm{id}),\  \|V\|_{C^{k+1}},\ & \|\mathscr{Q}-\mathrm{id}\|_{C^k},\ \mathrm{dist}_{C^{k+1}}(\di u\circ \mathscr{Q},\mathrm{id}),\ \|h-1\|_{C^{k+1}} \Big\}\\  &\leq \omega_k( \|X-X_0\|_{C^{k+1}}); 
	\end{split}
\end{equation}
where $\mathrm{dist}_{C^{k+1}}(\di u\circ \mathscr{Q},\mathrm{id})$ is calculated at points of the unit sphere bundle of $M$.
\end{thm}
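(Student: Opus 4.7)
The key structural fact is that $X_0$ generates a free $S^1$-action on $M$ with quotient $\pi \colon M \to B$. I would treat the identity $h\, u^* X = X_0 - \mathscr{Q}[V]$ as an implicit equation in $(u,h,V,\mathscr{Q})$ to be solved by iteration from the trivial solution $(\mathrm{id},1,0,\mathrm{id})$, which holds when $X = X_0$. Linearizing at this base point, with $W = \delta u$, one obtains
\[
 X - X_0 \;=\; \delta h \cdot X_0 \;+\; [W, X_0] \;+\; \delta V,
\]
together with the constraints $\mathcal{L}_{X_0}\delta h = 0$, $\mathcal{L}_{X_0}\delta V = 0$ and $g(\delta V, X_0) = 0$. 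The variation of $\mathscr{Q}$ does not enter at this order because $V$ vanishes at the base point, so $\mathscr{Q}$ will play a purely nonlinear role. This linearized equation is solved by splitting $X - X_0$ into its $X_0$-parallel and $X_0$-orthogonal components and taking fibrewise $S^1$-averages: the averages provide $\delta h$ and $\delta V$ directly, while the zero-average remainders generate $W$ by integrating along $X_0$-orbits, in direct analogy with Lemma \ref{splitting}, the statement of that lemma for one-forms being transferable to vector fields via the musical isomorphism.

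To pass to the full nonlinear equation I would run a Newton-type iteration. At step $j$, extract $(W_j,\delta h_j,\delta V_j)$ from the residual $X^{(j)} - X_0$ as above and set $X^{(j+1)} := (1-\delta h_j)\,(\exp W_j)^* X^{(j)}$. Standard vector-field analogues of Lemma \ref{boundsonforms} for the pull-back, together with Lemma \ref{splitting} for the linearized inverse, give quadratic convergence in low norms at the price of a one-derivative loss; a Nash--Moser scheme with smoothing operators then converts this into convergence in every $C^k$. In the limit one obtains the diffeomorphism $u$, the invariant function $h$, and the invariant orthogonal field $V$. Finally, $\mathscr{Q}$ is defined pointwise so that $\di u_x \circ \mathscr{Q}_x$ equals the parallel transport from $T_xM$ to $T_{u(x)}M$ along a short geodesic: this makes $\di u \circ \mathscr{Q}$ manifestly close to the identity in $C^{k+1}$, while $\mathscr{Q}$ itself is close to the identity only in $C^k$, reflecting the fact that one factor of $\di u$ is absorbed in the correction. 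With this choice the equation $h u^* X = X_0 - \mathscr{Q}[V]$ acquires the intuitive reading $h(x)\,X(u(x)) \approx \di u_x[X_0(x)] - V(x)$: the $u$-image of each $X_0$-orbit is a reparametrized $X$-trajectory perturbed by the invariant drift $V$.

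The main obstacle is controlling the tame estimates $\omega_k(\|X-X_0\|_{C^{k+1}})$ throughout the iteration. Both the splitting lemma and the pull-back by $\exp W_j$ lose a derivative, so a naive iteration would force the $C^k$-norms to blow up; the Nash--Moser smoothing compensates for this, but its successful implementation hinges on proving quadratic convergence in a low norm together with at most polynomial growth in a high norm. This quantitative analysis, based on interpolation inequalities on $M$ and on Lemmas \ref{boundsonforms}, \ref{l:estreeb} and \ref{splitting}, is the most delicate part of the argument and dictates the precise form of $\omega_k$.
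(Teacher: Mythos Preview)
Your linearized analysis is correct and coincides with the paper's Lemma \ref{linear}: the operator $(U,V,h)\mapsto \mathcal{L}_{X_0}U - hX_0 - V$ is indeed invertible, with inverse given by the $S^1$-averaging and fibrewise integration you describe. However, the Nash--Moser apparatus you propose is unnecessary, and this is the main point on which the paper's argument differs from yours.

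The paper avoids any loss-of-derivatives issue by a judicious choice of function spaces. Instead of working in $C^k$, it introduces the Banach space
\[
\mathscr{U}=\{U\in\mathfrak{X}^0(M)\mid \mathcal{L}_{X_0}U\text{ exists and is continuous},\ \overline{U}=0\},\qquad \|U\|_{\mathscr{U}}=\|U\|_{C^0}+\|\mathcal{L}_{X_0}U\|_{C^0},
\]
together with $\mathscr{V}$ (continuous $\phi_{X_0}$-invariant fields orthogonal to $X_0$) and $\mathscr{H}$ (continuous invariant functions). The nonlinear map is written as $\Phi_X(U,V,h)=P(U)^{-1}\di u[X_0]-h\,P(U)^{-1}X(u)-V$, where $u=\exp\circ\,U$ and $P(U)=\di^v\exp(U)$. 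The only derivative of $U$ entering this expression is $\nabla_{X_0}U$, which is controlled by the $\mathscr{U}$-norm; hence $\Phi_X:\mathscr{U}_{\mathrm{inj}}\times\mathscr{V}\times\mathscr{H}\to\mathfrak{X}^0(M)$ is $C^1$ and its differential at $(0,0,1)$ is an isomorphism with \emph{no} derivative loss. The ordinary parametric inverse function theorem then produces the solution $(U,V,h)$, a priori only of class $C^0$ (plus $\mathcal{L}_{X_0}$-regular). Smoothness and the $C^k$ estimates are obtained afterwards by a separate difference-quotient bootstrap (Lemma \ref{l:lemmabounds}); this is where the shift $\omega_k(\|X-X_0\|_{C^{k+1}})$ comes from, not from the implicit-function step itself.

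So your scheme is not wrong in spirit, but the perceived derivative loss that drives you to Nash--Moser disappears once you stop insisting on full $C^1$-regularity of $U$ and demand only differentiability along $X_0$. Your choice of $\mathscr{Q}$ via parallel transport is close to, though not identical with, the paper's $\mathscr{Q}=\di u^{-1}\circ P(U)$; either works for the stated estimates.
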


Here, $\mathcal{L}_{X_0}$ denotes the Lie derivative along $X_0$ and $g$ is an arbitrary Riemannian metric on $M$ that is invariant under the $S^1$-action defined by $X_0$.
In Appendix \ref{appbottkol}, we give a complete proof of the above version of Bottkol's theorem and we discuss it further.

This section is devoted to the proof of the normal form for contact forms that are close to a Zoll one stated in Theorem \ref{main2} from the Introduction.

\begin{proof}[Proof of Theorem \ref{main2}] 
Let $\alpha_0$ be a Zoll contact form on $M$ with associated $S^1$-bundle denoted by $\pi:M\to B$. Let $\delta>0$ be the number obtained in Theorem \ref{tbot} taking $X_0=R_{\alpha_0}$. By Lemma \ref{l:estreeb}, there exists $\delta_0>0$ such that
\begin{equation}\label{epsilon1}
\|\alpha-\alpha_0\|_{C^2}<\delta_0\quad\Longrightarrow\quad \|R_\alpha-R_{\alpha_0}\|_{C^1}<\delta
\end{equation}
and we can apply Theorem \ref{tbot} to $X=R_\alpha$. We get a smooth diffeomorphism $u:M \rightarrow M$, a smooth vector field $V$ on $M$ satisfying 
\[
\mathcal{L}_{R_{\alpha_0}} V=0, \qquad g(V,R_{\alpha_0}) =0,
\]
a linear bundle morphism $\mathscr{Q}:TM \rightarrow TM$ lifting the identity and a smooth function $h: M \rightarrow \R$ satisfying $\mathcal{L}_{R_{\alpha_0}} h =0$ such that
\begin{equation}
\label{campi}
h\, u^*R_{\alpha} = R_{\alpha_0} - \mathscr{Q} [V].
\end{equation}
By choosing the $S^1$-invariant metric $g$ so that $R_{\alpha_0}$ is orthogonal to the contact distribution $\ker \alpha_0$, we obtain that $V$ takes values in $\ker \alpha_0$. Thanks to (\ref{lestime}) and Lemma \ref{l:estreeb}, $u$, $V$, $\mathscr{Q}$ and $h$ satisfy the bounds
\begin{equation}
\label{omega1}
\begin{split}
\max \{ \mathrm{dist}_{C^{k+1}}(u,\mathrm{id}), \ \|V\|_{C^{k+1}},\ & \|\mathscr{Q}-\mathrm{id}\|_{C^k},\ \mathrm{dist}_{C^{k+1}}(\di u\circ \mathscr{Q},\mathrm{id}),\ \|h-1\|_{C^{k+1}} \}\\  &\leq \omega_k( \|\alpha-\alpha_0\|_{C^{k+2}}), 
\end{split}
\end{equation}
for every integer $k\geq 0$, where the $\omega_k$'s are suitable moduli of continuity. In the following argument, we will need to successively replace the $\omega_k$'s by larger and larger moduli of continuity, but in order to keep the notation simple we will denote these new functions by the same symbol $\omega_k$. 

By \eqref{omega1}, $u$ is $C^1$-close to the identity when $\|\alpha-\alpha_0\|_{C^2}$ is small. In particular, up to reducing the size of the positive number $\delta_0$ in \eqref{epsilon1}, we may assume that
\begin{equation}
\label{rrvicini}
\mathrm{dist}_{C^0}(u,\mathrm{id}) \leq r,
\end{equation}
where $r$ is the positive number given by Lemma \ref{boundsonforms}. 

Let us consider now the one-form $\beta:= u^* \alpha$, so that $R_{\beta}=u^* R_{\alpha}$ and (\ref{campi}) can be rewritten as
\begin{equation}
\label{campi2}
h\, R_{\beta} = R_{\alpha_0} - \mathscr{Q} [V].
\end{equation}
For every $k\geq 0$, we can bound the $C^k$-norm of the difference $\beta-\alpha_0$ using Lemma \ref{boundsonforms} by
\[
\begin{split}
\|\beta - \alpha_0\|_{C^k} &\leq \| u^* (\alpha - \alpha_0)\|_{C^k} + \|u^* \alpha_0 - \alpha_0\|_{C^k} \\ &\lesssim \|\alpha-\alpha_0\|_{C^k}  \|\di u\|_{C^k}(1 + \|\di u\|_{C^{k-1}}^k) \\ & \quad + \|\alpha_0\|_{C^{k+1}} \,\mathrm{dist}_{C^{k+1}}(u,\mathrm{id}) ( 1 +  \|\di u\|_{C^k}^{k+1}),
\end{split}
\]
where for $k=0$ the undefined term $\|\di u\|_{C^{k-1}}^k$ is set to be zero.
Using (\ref{omega1}), we then get a bound of the form
\begin{equation}
\label{omega4}
\|\beta - \alpha_0\|_{C^k} \leq \omega_k (\|\alpha-\alpha_0\|_{C^{k+2}}) \qquad \forall k\geq 0.
\end{equation}
Similarly, Lemma \ref{boundsonforms} implies that the $C^k$-norm of the two-form
\[
\di\beta - \di\alpha_0= u^* (\di\alpha - \di\alpha_0)+ u^* \di\alpha_0 - \di\alpha_0
\]
has a bound of the form
\begin{equation}
\label{omega5}
\|\di\beta - \di\alpha_0\|_{C^k} \leq \omega_k (\|\alpha - \alpha_0\|_{C^{k+2}})\qquad \forall k\geq 0.
\end{equation}

We define the function $S\in \Omega^0(M)$ by
\[
S(x) := \frac{1}{T_0} \int_{\pi^{-1}(\pi(x))} \beta,
\]
where $T_0$ is the common period of the orbits of $R_{\alpha_0}$. By construction, the function $S$ is invariant under the action of the Reeb flow of $\alpha_0$, i.e.\ $\mathcal{L}_{R_{\alpha_0}} S=0$. From (\ref{omega4}) we obtain that $S$ is close to the constant function 1:
\begin{equation}
\label{omega7}
\|S-1\|_{C^k} \leq \omega_k (\|\alpha - \alpha_0\|_{C^{k+2}}) \qquad \forall k\geq 0.
\end{equation}
Denote by $\phi^t_{\alpha_0}$ the  flow of $R_{\alpha_0}$ and by 
\[
\gamma_x: \R/T_0 \Z \rightarrow M, \qquad \gamma_x(t) := \phi^t_{\alpha_0}(x),
\]
its orbit through $x\in M$. Then the function $S$ has the form
\[
S(x) = \frac{1}{T_0} \mathscr{S}(\gamma_x),
\]
where $\mathscr{S}$ is the action functional defined by the one-form $\beta$, i.e.\
\[
\mathscr{S}: C^{\infty}(  \R/T_0 \Z,M) \rightarrow \R, \qquad \mathscr{S}(\gamma) := \int_{\R/T_0 \Z} \gamma^* \beta.
\]
The Gateaux differential of $\mathscr{S}$ at the curve $\gamma$ is
\[
\mathscr{S}(\gamma)[\xi]  = \int_{\R/T_0 \Z} \gamma^* (\imath_{\xi} \di\beta) = \int_{\R/T_0 \Z} \di\beta[\xi(t),\gamma'(t)]\, \di t,
\]
for every tangent vector field $\xi$ along $\gamma$. The chain rule implies that the differential of $S$ has the form
\begin{equation}
\label{diffS}
\di S(x)[w] = \frac{1}{T_0} \int_{\R/T_0 \Z} \di\beta \bigl[ \di\phi_{\alpha_0}^t(x)[w] , R_{\alpha_0}(\phi_{\alpha_0}^t(x)) \bigr]\, \di t, \qquad \forall x\in M, \; w\in T_x M.
\end{equation}
The above integrand vanishes if $\beta=\alpha_0$, so this identity and (\ref{omega5}) imply the bound
\begin{equation}
\label{omega8}
\|\di S\|_{C^k} \leq \omega_k (\|\alpha - \alpha_0\|_{C^{k+2}}) \qquad \forall k\geq 0,
\end{equation}
which, together with (\ref{omega7}) for $k=0$, implies
\begin{equation}
\label{omega9}
\|S-1\|_{C^{k+1}} \leq \omega_k (\|\alpha - \alpha_0\|_{C^{k+2}}) \qquad \forall k\geq 0.
\end{equation}
By the definition of $S$, the one-form $\beta - S \alpha_0$ satisfies
\[
\int_{\pi^{-1}(b)} ( \beta - S \alpha_0 ) = 0 \qquad \forall b\in B,
\]
so by Lemma \ref{splitting} it splits as
\[
\beta - S \alpha_0 = \eta + \di f,
\]
where $\eta\in \Omega^1(M)$ satisfies $\imath_{R_{\alpha_0}} \eta=0$ and $f\in \Omega^0(M)$ has average zero on every orbit of $R_{\alpha_0}$. Moreover, the same lemma gives us the estimates
\begin{equation}
\label{bdssplit}
\begin{split}
\|\eta\|_{C^k} &\lesssim \|\beta-S\alpha_0\|_{C^k} + \| \di\imath_{R_{\alpha_0}} (\beta - S \alpha_0)\|_{C^k}, \\
\|f\|_{C^{k+1}} &\lesssim \|\imath_{R_{\alpha_0}} (\beta-S\alpha_0)\|_{C^k} + \| \di \imath_{R_{\alpha_0}} (\beta - S \alpha_0)\|_{C^k},
\end{split}
\end{equation}
for every $k\geq 0$. From (\ref{omega4}) and (\ref{omega7})  we obtain bounds of the following form for the $C^k$-norm of $\beta-S\alpha_0$, for every $k\geq 0$:
\begin{equation}
\label{omega11}
\|\beta-S\alpha_0\|_{C^k} \leq \|\beta-\alpha_0\|_{C^k} + \|(1-S) \alpha_0\|_{C^k} \leq \omega_k (\|\alpha - \alpha_0\|_{C^{k+2}}).
\end{equation}
Now we wish to estimate the $C^k$-norm of the one-form $\di\imath_{R_{\alpha_0}} (\beta-S\alpha_0)$. We have
\begin{equation}
\label{pride}
\imath_{R_{\alpha_0}} \beta  = \imath_{R_{\alpha_0}} u^* \alpha = u^* \bigl(  \imath_{u_* R_{\alpha_0}} \alpha \bigr).
\end{equation}
Applying the push-forward operator by $u$ to (\ref{campi}) we obtain
\[
u_* R_{\alpha_0} = (h\circ u^{-1}) R_{\alpha} + Y,
\]
where $Y$ is the vector field
\[
Y:= \di u\circ \mathscr{Q}[V\circ u^{-1}],
\]
and hence
\[
\imath_{u_* R_{\alpha_0}} \alpha = h\circ u^{-1} +  \imath_{Y} \alpha.
\]
By plugging this formula into (\ref{pride}) we obtain the identity
\begin{equation}
\label{ide}
\imath_{R_{\alpha_0}} (\beta-S\alpha_0) =  \imath_{R_{\alpha_0}} \beta -  S = h + u^* (\imath_Y \alpha) - S.
\end{equation}
By (\ref{omega1}), the vector field $Y$ has the bound
\[
\|Y\|_{C^{k+1}} \leq \omega_k (\|\alpha - \alpha_0\|_{C^{k+2}}) \qquad \forall k\geq 0,
\]
and hence we have
\begin{equation}
\label{omega12}
\|\imath_Y \alpha\|_{C^{k+1}} \leq \|\alpha\|_{C^{k+1}} \, \omega_k (\|\alpha - \alpha_0\|_{C^{k+2}})\leq\omega_k (\|\alpha - \alpha_0\|_{C^{k+2}}) \qquad \forall k\geq 0.
\end{equation}
Since $\imath_Y \alpha$ is a zero-form, by Lemma \ref{boundsonforms} we have 
\[
\|u^*(\imath_Y \alpha)\|_{C^{k+1}} \lesssim \|\imath_Y \alpha\|_{C^{k+1}} ( 1 + \|\di u\|_{C^k}^{k+1}) \qquad \forall k\geq 0,
\] 
so (\ref{omega1}) and (\ref{omega12}) imply a bound of the form
\[
\|u^*(\imath_Y \alpha)\|_{C^{k+1}} \leq  \omega_k (\|\alpha - \alpha_0\|_{C^{k+2}}) \qquad \forall k\geq 0.
\]
The above estimate, together with the identity (\ref{ide}) and the bounds (\ref{omega1}) for $h$ and (\ref{omega8}) for $\di S$, implies a bound of the form
\[
\|\di \imath_{R_{\alpha_0}} (\beta - S \alpha_0)\|_{C^k} \leq  \omega_k (\|\alpha - \alpha_0\|_{C^{k+2}}) \qquad \forall k\geq 0.
\]
Thanks to the above estimate, (\ref{bdssplit}) and (\ref{omega11}) yield the following bounds for the one-form $\eta$ and the function $f$ in the splitting of $\beta-S \alpha_0$:
\begin{equation}
\label{omega13}
\|\eta\|_{C^k} \leq \omega_k (\|\alpha - \alpha_0\|_{C^{k+2}}),\qquad \|f\|_{C^{k+1}}\leq \omega_k (\|\alpha - \alpha_0\|_{C^{k+2}})\qquad \forall k\geq 0.
\end{equation}
The differential of $\eta$ is the two-form
\begin{equation}\label{e:dieta}
\di\eta = \di(\beta - S\alpha_0) = \di \beta - \di S \wedge \alpha_0 - S \di\alpha_0,
\end{equation}
and its $C^k$-norm can be estimated by the triangle inequality as follows:
\[
\|\di\eta\|_{C^k} \leq \|\di\beta - \di\alpha_0\|_{C^k} + \|(S-1) \di\alpha_0\|_{C^k} + \|\di S \wedge \alpha_0\|_{C^k}.
\]
The above expression, together with (\ref{omega5}), (\ref{omega7}) and (\ref{omega8}), shows that the $C^k$-norm of $\di\eta$ satisfies
\begin{equation}
\label{omega14}
\|\di\eta\|_{C^k} \leq\omega_k (\|\alpha - \alpha_0\|_{C^{k+2}})\qquad \forall k\geq 0. 
\end{equation}

So far, we have proven that the diffeomorphism $u$ puts $\alpha$ into the desired normal form
\[
u^* \alpha = \beta = S \alpha_0 + \eta + \di f,
\]
so that the statements (i), (ii) and (iii) in Theorem \ref{main2} hold. Moreover, all the bounds of the theorem involving the objects appearing in these statements have been proven, see (\ref{omega1}), (\ref{omega9}), (\ref{omega13}) and (\ref{omega14}).

We now turn to the proof of (iv) and of the bound for $\mathscr F$. Contracting equation \eqref{e:dieta} by the vector field $R_{\alpha_0}$ and using (\ref{campi2}), we find
\begin{equation}
\label{ideta}
\imath_{R_{\alpha_0}} \di\eta = \imath_{R_{\alpha_0}} \di\beta + \di S = \imath_{\mathscr{Q} [V]} \di\beta + \di S.
\end{equation}
Now we wish to show that $V(x)$, which we recall belongs to $\ker \alpha_0(x)$, depends linearly on $\di S(x)$, for every $x\in M$. From (\ref{diffS}) and (\ref{campi2}) we obtain
\[
\di S(x)[w] = \frac{1}{T_0} \int_{\R/T_0 \Z} \di\beta \bigl[ \di\phi_{\alpha_0}^t(x)[w] , \mathscr{Q} [V(\phi_{\alpha_0}^t(x)) ]\bigr]\, \di t, \qquad \forall x\in M, \; w\in T_x M,
\]
and hence, using the fact that $V$ is invariant under the action of the flow $\phi_{\alpha_0}$, because $\mathcal{L}_{R_{\alpha_0}}V=0$,
\[
\di S(x)[w] = \frac{1}{T_0} \int_{\R/T_0 \Z} \di\beta \bigl[\di \phi_{\alpha_0}^t(x)[w] , \mathscr{Q}\circ \di\phi_{\alpha_0}^t(x) [V(x)] \bigr]\, \di t.
\]
We conclude that
\begin{equation}
\label{diS}
\di S(x)[w] = -B_x[V(x),w] \qquad \forall x\in M, \; w\in T_x M,
\end{equation}
where $B$ is the following bilinear form on $TM$:
\[
B_x[v,w] :=   \frac{1}{T_0} \int_{\R/T_0 \Z}  \di \beta \bigl[  \mathscr{Q}\circ \di\phi_{\alpha_0}^t(x) [v], \di\phi_{\alpha_0}^t(x)[w]  \bigr]\, \di t.
\]
Note that the above expression gives us the alternating bilinear form $\di\alpha_0$ if $\di\beta=\di\alpha_0$ and $\mathscr{Q}=\mathrm{id}$. Therefore, (\ref{omega1}) and (\ref{omega5}) imply that $B$ is close to $B_0=\di\alpha_0$:
\begin{equation}
\label{omega15}
\|B-\di\alpha_0\|_{C^k} \leq \omega_k (\|\alpha-\alpha_0\|_{C^{k+2}}) \qquad \forall k\geq 0.
\end{equation}
Consider now the restriction of $B$ to $\ker \alpha_0 \times \ker \alpha_0$ and let $\mathscr{B}: \ker \alpha_0 \rightarrow (\ker \alpha_0)^*$ be the corresponding bundle morphism, which is defined by
\[
B[v,w] = \langle \mathscr{B}[v],w \rangle \qquad \forall\,v,w\in  \ker \alpha_0, 
\]
where $\langle \cdot,\cdot\rangle$ denotes the duality pairing. Now observe that the morphism $\mathscr B_0$ associated to $B_0=\di\alpha_0$ is invertible as $\di\alpha_0$ is non-degenerate on $\ker\alpha_0$. Then, \eqref{omega15} tells us that, up to reducing the size of the positive number $\delta_0$ from (\ref{epsilon1}), the morphism $\mathscr B$ is invertible with
\begin{equation}\label{omega16}
\|\mathscr{B}^{-1} - \mathscr{B}_0^{-1} \|_{C^k} \leq \omega_k (\|\alpha-\alpha_0\|_{C^{k+2}}) \qquad \forall k\geq 0.
\end{equation}
Identifying $(\ker \alpha_0)^*$ with the subspace of $T^*M$ consisting of one-forms vanishing on $R_{\alpha_0}$, we infer from (\ref{diS}) that
\begin{equation}
\label{VvS}
V(x) = -\mathscr{B}^{-1}[\di S(x)].
\end{equation}
From (\ref{ideta}) we conclude that
\[
\imath_{R_{\alpha_0}} \di\eta = \di S - \imath_{\mathscr{Q} \circ \mathscr{B}^{-1} [\di S]} \di\beta.
\]
We can therefore uniquely define the endomorphism $\mathscr{F} : T^*M \rightarrow T^*M$ by setting
\[
\mathscr F[\alpha_0]:=0,\qquad \mathscr{F} [\xi] := \xi - \imath_{\mathscr{Q} \circ \mathscr{B}^{-1} [\xi]} \di\beta,\quad\forall\,\xi\in(\ker\alpha_0)^*,
\] 
and we obtain the desired identity
\[
\imath_{R_{\alpha_0}} \di\eta = \mathscr{F} [\di S].
\]
If $\mathscr F_0$ is the endomorphism corresponding to $\mathscr B_0$, the tautological identity
\[
\imath_{\mathscr{B}^{-1}_0 [\xi]} \di\alpha_0 = \xi,\quad\forall\,\xi\in(\ker\alpha_0)^*
\]
implies that $\mathscr F_0$ is the zero endomorphisms since
\[
\mathscr{F}_0 [\xi] = \xi - \imath_{\mathscr{B}^{-1}_0 [\xi]} \di\alpha_0=0 \quad \forall\,\xi \in (\ker\alpha_0)^*.
\]
From the definition of $\mathscr F$, we see that the bounds on $\mathscr Q$, $\mathscr B^{-1}$ and $\di\beta$ established in \eqref{omega1}, \eqref{omega16}, \eqref{omega5} imply that
\[
\|\mathscr{F} \|_{C^k}=\|\mathscr F-\mathscr F_0\|_{C^k} \leq  \omega_k (\|\alpha-\alpha_0\|_{C^{k+2}}) \qquad \forall k\geq 0,
\]
concluding the proof of (iv) and of the bound for $\mathscr F$.

There remains to prove (v) and the bound for $Z$. By applying the one-form $\beta$ to (\ref{campi2}) and using (\ref{VvS}) we find
\[
h = S + \imath_{R_{\alpha_0}} \di f - \imath_{\mathscr Q [V]} \beta = S + \imath_{R_{\alpha_0}} \di f + \imath_{\mathscr{Q} \circ \mathscr{B}^{-1} [\di S]} \beta.
\]
Defining the section $W$ of $\ker \alpha_0$ dually by
\[
\iota_W\xi = - \imath_{\mathscr{Q} \circ \mathscr{B}^{-1} [\xi]} \beta, \qquad \forall \xi\in (\ker \alpha_0)^*,
\]
we rewrite the above identity as
\begin{equation}
\label{hanum}
\imath_{R_{\alpha_0}} \di f = \imath_W \di S + h - S.
\end{equation}
By averaging along the orbits of the flow of $R_{\alpha_0}$ and using the fact that $h-S$ is invariant under this flow, we obtain the identity
\[
0 = \imath_{\overline{W}} \di S + h - S,
\]
where $\overline{W}$ denotes the averaged vector field
\[
\overline{W}(x) := \frac{1}{T_0} \int_{\R/T_0 \Z} \di \phi_{\alpha_0}^{-t}(x) \bigl[ W(\phi_{\alpha_0}^t(x) \bigr]\, \di t, \qquad \forall x\in M.
\]
Defining the vector field $Z:= W - \overline{W}$, which is also a section of $\ker \alpha_0$ and has average zero along the orbits of $R_{\alpha_0}$, (\ref{hanum}) becomes
\[
\imath_{R_{\alpha_0}} \di f = \imath_Z \di S.
\]
The identity
\[
- \iota_W\xi = \imath_{\mathscr Q \circ \mathscr{B}^{-1} [\xi] } \beta - \imath_{\mathrm{id}\circ\mathscr{B}_0^{-1}[\xi]} \alpha_0 = \imath_{\mathscr{Q} \circ \mathscr{B}^{-1} [\xi] } (\beta - \alpha_0) + \imath_{(\mathscr Q - \mathrm{id}) \circ \mathscr{B}_0^{-1} [\xi]} \alpha_0 + \imath_{\mathscr{Q} \circ (\mathscr{B}^{-1} - \mathscr{B}_0^{-1})[\xi]} \alpha_0,
\]
together with (\ref{omega1}), (\ref{omega4}) and (\ref{omega16}), implies the bound
\[
\|W\|_{C^k} \leq \omega_k(\|\alpha-\alpha_0\|_{C^{k+2}}).
\]
The same bounds holds also for $\overline{W}$ and hence for $Z$. This concludes the proof of (v) and of the bound for $Z$. The proof of Theorem \ref{main2} is complete.

\end{proof}

\section{The variational principle}
\label{varprinsec}

In this section, we prove Proposition \ref{mainprop1} from the Introduction, namely the variational principle for contact forms in normal form, and we discuss some consequences of it and Theorem \ref{main2}.

\begin{proof}[Proof of Proposition \ref{mainprop1}]
Assume that $\alpha_0$ is a Zoll contact form on $M$ and $\beta$ is a contact form on $M$ of the form
\begin{equation}
\label{labeta}
\beta = S\, \alpha_0 + \eta + \di f,
\end{equation}
where $S\in \Omega^0(M)$ is positive and invariant under the Reeb flow of $\alpha_0$, $\eta\in \Omega^1(M)$ satisfies $\imath_{R_{\alpha_0}} \eta=0$ and $\imath_{R_{\alpha_0}} \di \eta = \mathscr{F} [\di S]$ for some endomorphism $\mathscr{F} : T^*M \rightarrow T^*M$ lifting the identity, and $f\in \Omega^0(M)$. We denote by $\pi: M \rightarrow B$ the $S^1$-bundle determined by the flow of $R_{\alpha_0}$ and by $\widehat{S}: B \rightarrow \R$ the function defined by $S=\widehat{S}\circ \pi$.

By differentiating (\ref{labeta}) and contracting along $R_{\alpha_0}$ we obtain the identity
\[
\imath_{R_{\alpha_0}} \di \beta = \imath_{R_{\alpha_0}} ( \di S \wedge \alpha_0 + S\, \di \alpha_0 + \di \eta ) = - \di S + \mathscr{F}[\di S].
\]
Let $b\in B$ be a critical point of $\widehat{S}$. Then the circle $\pi^{-1}(b)$ consists of critical points of $S$, and the above identity  shows that $\imath_{R_{\alpha_0}} \di \beta$ vanishes on this circle. Therefore, $R_{\beta}$ is parallel to $R_{\alpha_0}$ on $\pi^{-1}(b)$, and hence $\pi^{-1}(b)$ is a closed orbit of $R_{\beta}$. Its period is
\[
\int_{\pi^{-1}(b)} \beta = \int_{\pi^{-1}(b)} ( S\, \alpha_0 + \eta + \di f ) = \widehat{S}(b)  \int_{\pi^{-1}(b)} \alpha_0 = \widehat{S}(b) T_{\min}(\alpha_0).
\]
Assume now that $\beta$ is Zoll. Therefore, all its closed orbits have the same period, and in particular this is true for the closed orbits corresponding to the maxima and minima of $\widehat{S}$ on $B$. The above formula for the periods then forces $\max \widehat{S} = \min \widehat{S}$, i.e.~$\widehat{S}$ - or equivalently $S$ - is constant.

Conversely, assume that $\widehat{S}$ and $S$ are constantly equal to a positive number $S_0$. Then all the points in $B$ are critical for $\widehat{S}$ and hence each circle $\pi^{-1}(b)$ is a closed orbit of $R_{\beta}$ of period $S_0 T_{\min}(\alpha_0)$. This shows that $\beta$ is Zoll.
\end{proof}

Together with Moser's argument, Theorem \ref{main2} and Proposition \ref{mainprop1} can be used to prove the $C^2$-local rigidity of Zoll contact form, i.e.\ the following statement.

\begin{prop}
\label{rigid-zoll}
Let $\alpha_0$ be a Zoll contact form on a closed manifold $M$ with closed orbits of common minimal period $T_0$. Then $\alpha_0$ has a $C^2$-neighborhood $\mathcal{N}$ such that if $\alpha\in \mathcal{N}$ is a Zoll contact form with closed orbits of common minimal period $T$, then there exists a diffeomorphism $v: M \rightarrow M$ such that 
\[
v^* \alpha = {\textstyle \frac{T}{T_0}} \alpha_0.
\]
Moreover, for every integer $k\geq 0$ there is a modulus of continuity $\omega_k$ such that
\[
\mathrm{dist}_{C^k}(v,\mathrm{id}) \leq \omega_k ( \|\alpha-\alpha_0\|_{C^{k+2}}).
\]
\end{prop}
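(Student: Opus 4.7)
The plan is to combine Theorem \ref{main2} with Moser's homotopy argument. Starting from $\alpha$ with $\|\alpha-\alpha_0\|_{C^2}<\delta_0$, I would apply Theorem \ref{main2} to obtain a diffeomorphism $u$ putting $\alpha$ into the normal form
\[
u^*\alpha = S\alpha_0 + \eta + \di f.
\]
Since $\alpha$, and hence $u^*\alpha$, is Zoll, Proposition \ref{mainprop1} forces $S$ to be constant, and the period formula from that proposition identifies this constant as $c:=T/T_0$. Condition (iv) of Theorem \ref{main2} combined with $\di S=0$ gives $\imath_{R_{\alpha_0}}\di\eta=\mathscr F[\di S]=0$, so $\eta$ is basic with respect to the $S^1$-bundle $\pi:M\to B$. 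Condition (v) with $\di S=0$ gives $\imath_{R_{\alpha_0}}\di f=\imath_Z \di S=0$, so $f$ is invariant under the Reeb flow of $\alpha_0$; combined with (ii), which says $f$ has zero average along each orbit, this forces $f\equiv 0$. The normal form therefore collapses to $u^*\alpha = c\alpha_0+\eta$.

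Next I would set $\beta_t:=c\alpha_0+t\eta$ for $t\in[0,1]$. Since $\|\eta\|_{C^0}$ and $\|\di\eta\|_{C^0}$ are small by Theorem \ref{main2}, the forms $\beta_t$ are contact along the whole path. The crucial observation is that
\[
\imath_{R_{\alpha_0}}\beta_t=c, \qquad \imath_{R_{\alpha_0}}\di\beta_t=0
\]
identically in $t$, so the Reeb vector field of $\beta_t$ is $R_{\beta_t}=c^{-1}R_{\alpha_0}$ for \emph{all} $t\in[0,1]$; in particular $\imath_{R_{\beta_t}}\eta=0$. I would then look for a time-dependent vector field $X_t$ whose flow $\psi_t$ (with $\psi_0=\mathrm{id}$) satisfies $\psi_t^*\beta_t=\beta_0$. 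Differentiating in $t$ and applying Cartan's formula yields the Moser equation
\[
\eta+\di(\imath_{X_t}\beta_t)+\imath_{X_t}\di\beta_t=0.
\]
With the ansatz $\imath_{X_t}\beta_t=0$ this reduces to $\imath_{X_t}\di\beta_t=-\eta$; the non-degeneracy of $\di\beta_t$ on $\ker\beta_t$ (contact condition) together with the compatibility $\imath_{R_{\beta_t}}\eta=0$ produces a unique smooth solution $X_t\in\ker\beta_t$.

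Setting $w:=\psi_1$ and $v:=u\circ w$ then gives $v^*\alpha=c\alpha_0$, as required. For the quantitative estimate, $\|X_t\|_{C^k}$ is controlled pointwise by $\|\eta\|_{C^k}$ through the inverse of $\di\beta_t|_{\ker\beta_t}$, whose bounds depend on $\|\di\eta\|_{C^k}$; both of these are bounded by $\omega_k(\|\alpha-\alpha_0\|_{C^{k+2}})$ thanks to Theorem \ref{main2}. A standard Gr\"onwall argument propagates this bound to $\mathrm{dist}_{C^k}(w,\mathrm{id})$, and combining with the bound on $u$ from Theorem \ref{main2} gives the desired estimate for $v=u\circ w$. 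The main subtle point, and the one that really uses the Zoll hypothesis on $\alpha$, is that $R_{\beta_t}$ equals $c^{-1}R_{\alpha_0}$ \emph{exactly} along the whole homotopy — not merely approximately — a rigidity which is a direct consequence of the constancy of $S$ together with the fine structure (iv) and (v) of the normal form, and which is precisely what makes the simple ansatz $\imath_{X_t}\beta_t=0$ work without any further correction term.
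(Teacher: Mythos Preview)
Your proposal is correct and follows essentially the same route as the paper's own proof: apply Theorem~\ref{main2}, use Proposition~\ref{mainprop1} together with properties (ii), (iv), (v) to reduce to $u^*\alpha=\frac{T}{T_0}\alpha_0+\eta$ with $\imath_{R_{\alpha_0}}\eta=0$ and $\imath_{R_{\alpha_0}}\di\eta=0$, and then run Moser's homotopy along $\beta_t=\frac{T}{T_0}\alpha_0+t\eta$ with a vector field $X_t\in\ker\beta_t$ solving $\imath_{X_t}\di\beta_t=-\eta$. The paper phrases the key observation as ``$R_{\beta_t}$ is parallel to $R_{\alpha_0}$'' where you write $R_{\beta_t}=c^{-1}R_{\alpha_0}$, but these are the same fact, and the rest of the argument and the estimates match.
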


\begin{proof}
Let $\mathcal{N}$ be the set of contact forms $\alpha$ such that $\|\alpha-\alpha_0\|_{C^2}< \delta_0$, with $\delta_0$ as in Theorem \ref{main2}. Let $\alpha\in \mathcal{N}$ be a Zoll contact form with closed orbits of common minimal period $T$. Consider a diffeomorphism $u: M \rightarrow M$ such that $u^* \alpha$ has the normal form
\[
u^* \alpha = S \, \alpha_0 + \eta + \di f,
\]
where $S$, $\eta$ and $f$ satisfy all the requirements stated in Theorem \ref{main2}. By Proposition \ref{mainprop1}, the function $S$ is constant and equal to $T/T_0$, so statements (ii), (iv) and (v) of 
Theorem \ref{main2} imply that $f$ is identically zero and $\imath_{R_{\alpha_0}} \di\eta=0$. We then have
\[
u^* \alpha = {\textstyle \frac{T}{T_0}} \, \alpha_0 + \eta,
\]
where the one-form $\eta$ satisfies $\imath_{R_{\alpha_0}} \eta =0$ and $\imath_{R_{\alpha_0}} \di \eta=0$. Moreover, the bounds of Theorem \ref{main2} imply
\begin{equation}
\label{thebound1}
\max\{ \mathrm{dist}_{C^{k+1}}(u,\mathrm{id}), |T-T_0|,  \|\eta\|_{C^k}, \|\di \eta\|_{C^k} \} \leq \omega_k (\|\alpha-\alpha_0\|_{C^{k+2}}),
\end{equation}
for some modulus of continuity $\omega_k$. This modulus of continuity will be replaced by larger ones in the following argument, but in order to keep the notation simple we use the same symbol $\omega_k$ for all these moduli of continuity.

It is now enough to find a diffeomorphism $w: M \rightarrow M$ which satisfies
\begin{equation}
\label{dovev1}
w^*\Bigl(  {\textstyle \frac{T}{T_0}} \, \alpha_0 + \eta \Bigr) =  {\textstyle \frac{T}{T_0}} \, \alpha_0,
\end{equation}
and 
\begin{equation}
\label{uuu}
\mathrm{dist}_{C^k}(v,\mathrm{id}) \leq \omega_k ( \|\alpha-\alpha_0\|_{C^{k+2}}).
\end{equation}
Such a diffeomorphism can be found by Moser's homotopy argument. Indeed, we set
\[
\beta_t := {\textstyle \frac{T}{T_0}} \, \alpha_0 + t  \eta,
\]
and get from (\ref{thebound1}):
\begin{equation}
\label{thebound2}
\max_{t\in [0,1]} \|\beta_t - \alpha_0\|_{C^k} \leq  \omega_k (\|\alpha-\alpha_0\|_{C^{k+2}}), \quad \max_{t\in [0,1]} \|\di \beta_t - \di \alpha_0\|_{C^k} \leq  \omega_k (\|\alpha-\alpha_0\|_{C^{k+2}}).
\end{equation}
The above bounds for $k=0$ imply that, up to replacing $\mathcal{N}$ by a smaller $C^2$-neighborhood of $\alpha_0$, $\beta_t$ is a contact form for every $t\in [0,1]$. Note that the fact that $\imath_{R_{\alpha_0}} \di \eta$ is identically zero implies that the Reeb vector field of $\beta_t$ is parallel to $R_{\alpha_0}$ for every $t\in [0,1]$. 
The contact structure $\ker \beta_t$ depends smoothly on $t\in [0,1]$ and since $\di\beta_t$ is non-degenerate on it, we can find a smooth family of vector fields $\{Y_t\}_{t\in [0,1]}$ on $M$ such that
\[
Y_t\in \ker \beta_t, \qquad \imath_{Y_t} \di\beta_t |_{\ker \beta_t} = -\eta|_{\ker \beta_t}.
\]
Since $\eta$ vanishes on the line $\R R_{\beta_t} = \R R_{\alpha_0}$, we actually have
\[
\imath_{Y_t} \di\beta_t = -\eta
\]
on the whole tangent bundle of $M$. Thanks to the bounds (\ref{thebound1}) and  (\ref{thebound2}) we have
\begin{equation}
\label{thebound3}
\max_{t\in [0,1]} \|Y_t\|_{C^k} \leq \omega_k ( \|\alpha-\alpha_0\|_{C^{k+2}}).
\end{equation}
Let $\phi_t$ be the path of diffeomorphisms of $M$ that is defined by integrating the non-autonomous vector field $Y_t$, i.e.\
\[
\frac{d}{dt} \phi_t = Y_t(\phi_t), \quad \phi_0 = \mathrm{id}.
\]
From Cartan's identity and from the properties of $Y_t$ we find
\[
\frac{\di}{\di t} \phi_t^* \beta_t = \phi_t^* \Bigl( \mathcal{L}_{Y_t} \beta_t + \frac{\di \beta_t}{\di t} \Bigr) =  \phi_t^* \Bigl( \imath_{Y_t} \di\beta_t + \di \imath_{Y_t} \beta_t - \eta \Bigr) = \phi_t^* (\eta-\eta) = 0,
\]
which together with the identity $\phi_0^* \beta_0 = \beta_0$ implies
\[
\phi_t^* \beta_t = \beta_0 \qquad \forall t\in [0,1].
\]
Thus, the diffeomorphism $w:= \phi_1$ satisfies (\ref{dovev1}). Finally, (\ref{uuu}) holds because of (\ref{thebound3}).
\end{proof}

\begin{rem}
Let $\alpha_0$ be a Zoll contact form on a closed manifold $M$ with closed orbits of common minimal period $T_0$. The first part of the proof of the above proposition shows that if $\alpha$ is a Zoll contact form which is $C^2$-close to $\alpha_0$ and its orbits have minimal period $T$, then there is a diffeomorphism $u: M \rightarrow M$ 
such that
\[
u^* R_{\alpha} = {\textstyle \frac{T_0}{T}} R_{\alpha_0}
\]
and is $C^{k+1}$-close to the identity when $\alpha$ is $C^{k+2}$-close to $\alpha_0$. In other words, we obtain a better bound on the distance of the diffeomorphism $u$ from the identity if we just require $u$ to conjugate the Reeb flows (up to a linear time reparametrization). We have proven this using the normal form from Theorem \ref{main2}, but one can also deduce it from the structural stability of free $S^1$-actions, which is a more elementary fact (see e.g.\ \cite[Lemma 4.7]{bk20}).
\end{rem}

As observed in the Introduction, Theorem \ref{main2} and Proposition \ref{mainprop1} immediately imply a multiplicity result for closed orbits of Reeb flows close to Zoll ones that goes back to Weinstein \cite{wei73b}. 
Denoting by $\sigma_{\mathrm{prime}} (\alpha)$ the prime spectrum of $\alpha$, i.e.\ the set of periods of the non-iterated closed orbits of $R_{\alpha}$, we can complement this multiplicity result with a spectral rigidity result and state it as follows.

\begin{cor} \label{vpina}
Let $\alpha_0$ be a Zoll contact form on a closed manifold $M$ with closed orbits of common minimal period $T_0$, and let $\pi: M \rightarrow B$ be the corresponding $S^1$-bundle. For every $\epsilon>0$ there exists $\delta>0$ such that every contact form $\alpha$ with $\|\alpha-\alpha_0\|_{C^2} < \delta$ has at least as many closed Reeb orbits with period in the interval $(T_0-\epsilon,T_0+\epsilon)$ as the minimal number of critical points of a smooth function on $B$. Moreover, if for such a contact form $\alpha$ the set
\[
\sigma_{\mathrm{prime}} (\alpha) \cap (T_0-\epsilon,T_0+\epsilon)
\]
contains only one element $T$, then $\alpha$ is Zoll and there exists a diffeomorphism $v: M \rightarrow M$ such that $v^* \alpha = \frac{T}{T_0} \alpha_0$.
\end{cor}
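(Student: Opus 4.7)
The plan is to combine the normal form of Theorem \ref{main2}, the variational principle of Proposition \ref{mainprop1}, and the rigidity of Proposition \ref{rigid-zoll}, with a short supplementary argument ruling out ``short'' iterated orbits of $R_\alpha$.

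Given $\epsilon>0$, I would first choose $\delta\in(0,\delta_0)$, with $\delta_0$ the constant from Theorem \ref{main2}, small enough that the modulus of continuity $\omega_0$ from that theorem satisfies $\omega_0(\delta)\,T_0<\epsilon$. This guarantees that the function $S$ produced by the normal form obeys $|\widehat{S}(b)T_0-T_0|<\epsilon$ for every $b\in B$. I would further shrink $\delta$ so that no closed orbit of $R_\alpha$ with $\|\alpha-\alpha_0\|_{C^2}<\delta$ has prime period less than $2T_0/3$. The latter follows from Lemma \ref{l:estreeb} together with the fact that $\phi^t_{\alpha_0}$ has no fixed points for $t\in(0,T_0)$: by compactness, $\phi^t_{\alpha_0}$ is uniformly bounded away from the identity on any closed sub-interval of $(0,T_0)$, and for $\alpha$ sufficiently $C^2$-close to $\alpha_0$ the flow $\phi^t_\alpha$ remains $C^0$-close to $\phi^t_{\alpha_0}$ on $[0,2T_0/3]$, while the non-vanishing of $R_\alpha$ excludes orbits of arbitrarily small prime period.

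For such $\alpha$, Theorem \ref{main2} gives a diffeomorphism $u$ with $u^*\alpha=S\alpha_0+\eta+\di f$ in normal form. By Proposition \ref{mainprop1}, each critical point $b$ of $\widehat{S}:B\to\R$ produces a closed orbit $\pi^{-1}(b)$ of $R_{u^*\alpha}$ of period $\widehat{S}(b)T_0\in(T_0-\epsilon,T_0+\epsilon)$. Pushing forward by $u$ yields closed orbits of $R_\alpha$ with the same periods, and distinct critical points plainly give distinct orbits. By the lower bound on prime periods of $R_\alpha$ from the previous step, the value $\widehat{S}(b)T_0$ cannot be a proper multiple of the prime period of the corresponding orbit, and is therefore itself the prime period. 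This proves the multiplicity statement, since the number of critical points of any smooth function on $B$ is bounded below by the minimal critical number of $B$.

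For the rigidity statement, suppose $\sigma_{\mathrm{prime}}(\alpha)\cap(T_0-\epsilon,T_0+\epsilon)=\{T\}$. The preceding paragraph forces $\widehat{S}(b)T_0=T$ at every critical point $b$ of $\widehat{S}$, including the global maximum and minimum on the compact manifold $B$. Hence $\widehat{S}\equiv T/T_0$, so $S$ is constant; by the last clause of Proposition \ref{mainprop1}, $u^*\alpha$ is Zoll, and so is $\alpha$. Proposition \ref{rigid-zoll} applied to $\alpha$ then produces the desired diffeomorphism $v:M\to M$ with $v^*\alpha=(T/T_0)\alpha_0$. The technically most delicate point throughout is the preliminary exclusion of short iterated orbits: it is exactly this step that makes the bijection between critical points of $\widehat{S}$ and prime periods legitimate, and without it both the multiplicity count in terms of \emph{prime} periods and the rigidity conclusion could fail.
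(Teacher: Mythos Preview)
Your argument is correct and follows the same route as the paper: normal form (Theorem~\ref{main2}), variational principle (Proposition~\ref{mainprop1}), then rigidity (Proposition~\ref{rigid-zoll}). The one difference is the step you call ``technically most delicate,'' the preliminary lower bound on prime periods of $R_\alpha$; the paper omits it, and in fact it is unnecessary. Each fibre $\pi^{-1}(b)$ is an \emph{embedded} circle in $M$, and on it $R_\beta$ is a nowhere-vanishing tangent vector field, so the $R_\beta$-flow traverses $\pi^{-1}(b)$ exactly once before closing up. The time this takes is $\int_{\pi^{-1}(b)}\beta=\widehat{S}(b)T_0$, which is therefore automatically the \emph{prime} period of that orbit---no proper iterate can intervene. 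With this observation in hand, both the multiplicity count and the rigidity conclusion go through directly, exactly as in the paper's short proof.
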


\begin{proof}
If $\|\alpha-\alpha_0\|_{C^2}<\delta$ with $\delta$ small enough,  Theorem \ref{main2} gives us a diffeomorphism $u: M \rightarrow M$ such that $u^*\alpha=\beta$, with $\beta$ of the form (\ref{labeta}). Up to choosing $\delta$ small enough, we also obtain
\[
\|S-1\|_{C^0} < \frac{\epsilon}{T_0}.
\]
Denote by $\widehat{S}: B \rightarrow \R$ the induced function on $B$. By Proposition \ref{mainprop1}, for every critical point of $\widehat{b}$ of $\widehat{S}$ the circle $\pi^{-1}(b)$ is a closed orbit of $R_{\beta}= u^* R_{\alpha}$ of period $\widehat{S}(b) T_0 \in (T_0-\epsilon,T_0 + \epsilon)$, and hence $u(\pi^{-1}(b))$ is a closed orbit of $R_{\alpha}$ of the same period. This proves the first statement. If the prime spectrum of $\alpha$ has just one element in the interval $(T_0-\epsilon,T_0+\epsilon)$ then 
$\widehat{S}$ must be constant, and hence $\alpha$ is Zoll. The last statement follows from Proposition \ref{rigid-zoll}.
\end{proof}

The second statement in the corollary above is a local version, in arbitrary dimension, of a spectral rigidity phenomenon that has been proven by Cristofaro-Gardiner and Mazzucchelli in dimension three, see \cite[Corollary 1.2]{cgm20}: Any contact form $\alpha$ on a closed three-manifold whose prime spectrum consists of a single element is Zoll. The proof of the latter result uses embedded contact homology.

\begin{rem} 
The vector field $R_{\alpha}$ might of course have many other closed orbits of very large period, but it is natural to ask whether all the closed orbits of $R_{\alpha}$ of period close to $T_0$ are determined by the variational principle $\widehat{S}$. This is indeed true, provided that $\alpha$ is $C^3$-close to $\alpha_0$: For every $\epsilon>0$ there exists $\rho>0$ such that if $\|\alpha-\alpha_0\|_{C^3} < \rho$ then every non-iterated closed orbit of $R_{\alpha}$ has either period larger than $1/\epsilon$ or contained in the interval $(T_0-\epsilon,T_0+\epsilon)$, and in the latter case it is of the form $u(\pi^{-1}(b))$ for some critical point $b$ of $\widehat{S}$. Thanks to identity (\ref{VvS}), this follows from the more general Proposition \ref{Vvedetutto} that is proved in Appendix \ref{appbottkol}.
\end{rem}

\section{The volume formula}

In this section, we wish to prove Proposition \ref{mainprop2} from the Introduction.  In the proof we need the notion of dual endomorphism on the space of alternating forms. 
If $M$ is a $d$-dimensional manifold, then the vector bundle $\Lambda^{d}M$ is one-dimensional and the wedge product induces a non-degenerate pairing 
\[
\Lambda^k M \times \Lambda^{d-k} M \rightarrow \Lambda^{d} M, \qquad (\gamma_1,\gamma_2) \mapsto \gamma_1 \wedge \gamma_2,
\]
for every $k=0,1,\dots,d$. Therefore, every endomorphism $\mathscr{F} : \Lambda^k M \rightarrow \Lambda^k M$ has a dual endomorphism 
\[
\mathscr F^\vee:\Lambda^{d-k} M \to \Lambda^{d-k} M
\]
such that
\[
\mathscr F[\gamma_1]\wedge\gamma_2=\gamma_1\wedge\mathscr F^{\vee}[\gamma_2],\qquad \forall\,(\gamma_1,\gamma_2)\in \Lambda^k M\times \Lambda^{d-k} M.
\]
Moreover,
\[
\|\mathscr F^\vee\|_{C^k}\lesssim \|\mathscr F\|_{C_k}\qquad\forall\,k\geq0.
\]
We now proceed with the proof of the volume formula.

\begin{proof}[Proof of Proposition \ref{mainprop2}]
Let $\alpha_0$ be a Zoll contact form on the $(2n-1)$-dimensional closed manifold $M$. Our first aim is to compute the integral
\[
\int_M \beta \wedge \di \beta^{n-1}
\]
for the one-form
\[
\beta := S\,\alpha_0 + \eta + \di f,
\]
where $S,f\in \Omega^0(M)$ and $\eta\in \Omega^1(M)$ satisfies 
\[
\imath_{R_{\alpha_0}} \eta = 0, \qquad \imath_{R_{\alpha_0}}\di  \eta = \mathscr{F}[\di S],
\]
for some endomorphism $\mathscr{F}: T^*M \rightarrow T^*M$ lifting the identity. 

An elementary computation, involving only the identity $\imath_{R_{\alpha_0}} \eta=0$ and Stokes theorem, shows that
\begin{equation}
\label{forvolume}
\begin{split}
\int_M \beta \wedge \di\beta^{n-1} =  &\int_M \Bigl( S^n \alpha_0 \wedge \di\alpha_0^{n-1} + \sum_{j=1}^{n-1} \binom{n}{j} \di(S^j) \wedge \alpha_0 \wedge \di\alpha_0^{j-1} \wedge \eta \wedge \di\eta^{n-1-j} \\ &+ \sum_{j=1}^{n-1} \binom{n}{j-1} S^{j-1} \di\alpha_0^{j-1} \wedge \eta \wedge \di\eta^{n-j} \Bigr).
\end{split} 
\end{equation}
For the reader's convenience, this computation is carried out explicitly at the end of this subsection, see Lemma \ref{computation} below. 

Observe that the operator $\xi\mapsto\alpha_0 \wedge \imath_{R_{\alpha_0}}\xi$ acts as the identity on $(2n-1)$-forms. Therefore, the forms appearing in the last sum of (\ref{forvolume}) can be manipulated as follows:
\[
\begin{split}
\di\alpha_0^{j-1} \wedge \eta  \wedge \di\eta^{n-j} &= \alpha_0 \wedge \imath_{R_{\alpha_0}} ( \di\alpha_0^{j-1} \wedge \eta \wedge \di\eta^{n-j}) \\ &= - \alpha_0 \wedge \di\alpha_0^{j-1} \wedge \eta \wedge \imath_{R_{\alpha_0}}  (\di\eta^{n-j}) \\ &= - (n-j) \alpha_0 \wedge \di\alpha_0^{j-1} \wedge \eta \wedge  (\imath_{R_{\alpha_0}} \di\eta )\wedge \di\eta^{n-1-j}.
\end{split}
\]
Here we have used the fact that $\eta$ vanishes on $R_{\alpha_0}$. Now we can use the assumption on $\di\eta$ and replace $\imath_{R_{\alpha_0}} \di\eta$ in the above expression by $\mathscr{F}[\di S]$. Using also the definition of the dual operator $\mathscr{F}^{\vee}$ at the beginning of this section, we can go on with the chain of identities and obtain
\[
\begin{split}
\di\alpha_0^{j-1} \wedge \eta  \wedge \di\eta^{n-j} &= -(n-j) \alpha_0 \wedge \di\alpha_0^{j-1} \wedge \eta \wedge \mathscr{F}[\di S] \wedge \di\eta^{n-1-j} \\ &= -(n-j)  \mathscr{F}[\di S]  \wedge \alpha_0 \wedge \di \alpha_0^{j-1} \wedge \eta \wedge \di\eta^{n-1-j}  \\ &= -(n-j)  \di S \wedge \mathscr{F}^{\vee} [ \alpha_0 \wedge \di\alpha_0^{j-1} \wedge \eta  \wedge \di\eta^{n-1-j}  ].
\end{split}
\]
Multiplication of the above form by $S^{j-1}$ gives us
\[
S^{j-1} \di\alpha_0^{j-1} \wedge \eta  \wedge \di\eta^{n-j} = - \frac{n-j}{j} \di(S^j) \wedge \mathscr{F}^{\vee} [ \alpha_0 \wedge \di\alpha_0^{j-1} \wedge \eta  \wedge \di\eta^{n-1-j}  ].
\]
By plugging the above identities into the last sum of (\ref{forvolume}) we obtain the following expression:
\[
\int_M \beta \wedge \di\beta^{n-1} =  \int_M \Bigl( S^n \alpha_0 \wedge \di\alpha_0^{n-1} + \sum_{j=1}^{n-1} \binom{n}{j} \di(S^j) \wedge \tau_j - \sum_{j=1}^{n-1} \frac{n-j}{j} \binom{n}{j-1} \di(S^j) \wedge  \mathscr{F}^{\vee} [ \tau_j  ] \Bigr),
\]
where $\tau_j$ is the $(2n-2)$-form
\[
\tau_j :=  \alpha_0 \wedge \di\alpha_0^{j-1} \wedge \eta  \wedge \di\eta^{n-1-j}.
\]
By Stokes theorem we can turn this formula into
\[
\int_M \beta \wedge \di\beta^{n-1} =  \int_M \Bigl( S^n \alpha_0 \wedge \di\alpha_0^{n-1} - \sum_{j=1}^{n-1} \binom{n}{j} S^j \di \tau_j + \sum_{j=1}^{n-1} \frac{n-j}{j} \binom{n}{j-1} S^j \di \bigl( \mathscr{F}^{\vee} [ \tau_j  ] \bigr) \Bigr).
\]
This formula can be rewritten as
\[
\int_M \beta \wedge \di\beta^{n-1} = \int_M p(x,S(x)) \, \alpha_0 \wedge \di\alpha_0^{n-1},
\]
where
\[
p(x,s) := s^n + \sum_{j=1}^{n-1} p_j(x) s^j
\]
and the functions $p_j\in \Omega^0(M)$ are defined by
\[
p_j \, \alpha_0 \wedge \di\alpha_0^{n-1} = - \binom{n}{j} \di\tau_j + \frac{n-j}{j} \binom{n}{j-1} \di \bigl( \mathscr{F}^{\vee}[\tau_j] \bigr).
\]
Since the right-hand side is an exact $(2n-1)$-form, the function $p_j$ integrates to zero when multiplied by $\alpha_0 \wedge \di \alpha_0^{n-1}$, as stated in Proposition \ref{mainprop2}.

There remains to check the last statement about the $C^0$-norm of the functions $p_j$. Namely, we must prove that for any $\epsilon>0$ there exists $\delta>0$ such that if
\begin{equation}
\label{asspicc}
\max\{\|\eta\|_{C^0}, \|\di\eta\|_{C^0}, \|\mathscr{F}\|_{C^0}\} < \delta, \qquad \max\{ \|\eta\|_{C^1}, \|\di\eta\|_{C^1}, \|\mathscr{F}\|_{C^1}\} < c,
\end{equation}
then $\|p_j\|_{C^0}< \epsilon$ for every $j=1,\dots,n-1$.

Assume that (\ref{asspicc}) holds for some positive number $\delta$, whose size will be specified in due time. Then the $(2n-2)$-form $\tau_j$ and its differential
\[
\di\tau_j = \di\alpha_0^j \wedge \eta \wedge \di\eta^{n-1-j} - \alpha_0 \wedge \di\alpha_0^{j-1} \wedge \di\eta^{n-j}
\]
have the $C^0$-bounds
\begin{equation}
\label{bound1}
\|\tau_j\|_{C^0} \leq b_0 \delta^{n-j}, \qquad 
\|\di\tau_j\|_{C^0} \leq b_0 \delta^{n-j},
\end{equation}
for a suitable constant $b_0$. Using the Leibniz formula, (\ref{asspicc}) implies also the bound
\begin{equation}
\label{bound2}
\|\tau_j\|_{C^1} \leq b_1 ( \delta^{n-j} + c \delta^{n-1-j}),
\end{equation}
for a suitable constant $b_1$. The estimates on the morphism $\mathscr{F}$ in (\ref{asspicc}) give analogous bounds for the dual morphism $\mathscr{F}^{\vee}$, i.e.
\[
\|\mathscr{F}^{\vee}\|_{C^0} \leq b_2 \delta, \qquad  \|\mathscr{F}^{\vee}\|_{C^1} \leq b_2 c,
\]
for a suitable constant $b_2$. Then the Leibniz formula together with (\ref{bound1}) and (\ref{bound2}) yield
\[
\bigl\| \di (\mathscr{F}^{\vee} [\tau_j] ) \bigr\|_{C^0} \leq \|\mathscr{F}^{\vee}\|_{C^1} \|\tau_j\|_{C^0} + \|\mathscr{F}^{\vee}\|_{C^0} \|\tau_j\|_{C^1} \leq b_0 b_2 c\delta^{n-j} + b_1 b_2 ( \delta^{n-j} + c \delta^{n-1-j}) \delta.
\]
The second bound in (\ref{bound1}) and the above one show that, by choosing $\delta$ small enough, the $C^0$-norm of both $\di\tau_j$ and $\di (\mathscr{F}^{\vee} [\tau_j] )$ can be made arbitrarily small. By definition of the densities $p_j$, this implies that we can find a positive number $\delta$, depending on $c$, such that (\ref{asspicc}) implies
\[
\|p_j\|_{C^0} < \epsilon \qquad \forall j=1,\dots,n-1.
\]
This concludes the proof.
\end{proof}
We conclude this subsection by reproducing the computations leading to identity (\ref{forvolume}).
\begin{lem}
	\label{computation}
	Assume that $\beta\in \Omega^1(M)$ has the form $\beta = S \alpha_0 + \eta+\di f$, 
	where $S,f\in \Omega^0(M)$ and $\eta\in \Omega^1(M)$ is such that $\imath_{R_{\alpha_0}} \eta=0$. Then the identity (\ref{forvolume}) holds.
\end{lem}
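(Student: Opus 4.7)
The strategy is one Stokes reduction, a binomial expansion, a single integration by parts, and a Pascal-identity reindexing. First I would note that the $\di f$ piece contributes nothing to the integral: since $\di\beta = \di(S\alpha_0 + \eta)$, one has $\di f \wedge \di\beta^{n-1} = \di\bigl( f\, \di\beta^{n-1} \bigr)$, so by Stokes' theorem
\[
\int_M \beta \wedge \di\beta^{n-1} \;=\; \int_M (S\alpha_0 + \eta) \wedge (\di S \wedge \alpha_0 + S\,\di\alpha_0 + \di\eta)^{n-1}.
\]

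Next I would expand the $(n-1)$-th power via the binomial theorem, which applies because all three summands are $2$-forms and therefore commute. The nilpotency $(\di S \wedge \alpha_0)^2 = 0$ (a consequence of $\alpha_0 \wedge \alpha_0 = 0$) restricts the expansion to terms containing at most one copy of $\di S \wedge \alpha_0$; and wedging $S\alpha_0$ against any such factor vanishes for the same reason. After this step three explicit families of integrands remain, indexed by the number of factors of $S\,\di\alpha_0$ and $\di\eta$, and the Leibniz identity $S^{j-1}\,\di S = \tfrac{1}{j}\di(S^j)$ absorbs each $\di S$ into a differential of a power of $S$.

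The delicate part is the family $S^k \alpha_0 \wedge \di\alpha_0^{k-1} \wedge \di\eta^{n-k}$ for $1 \leq k \leq n-1$, which does not appear in the target identity. I would handle it by writing $\di\eta^{n-k} = \di(\eta \wedge \di\eta^{n-k-1})$ and integrating by parts against $S^k \alpha_0 \wedge \di\alpha_0^{k-1}$; using $\di(\alpha_0 \wedge \di\alpha_0^{k-1}) = \di\alpha_0^k$, this produces one $\di(S^k)\wedge \alpha_0 \wedge \di\alpha_0^{k-1}\wedge \eta \wedge \di\eta^{n-k-1}$ contribution and one $S^k \di\alpha_0^k \wedge \eta \wedge \di\eta^{n-k-1}$ contribution. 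The binomial coefficients from the three sources then merge precisely via Pascal's identity $\binom{n-1}{j-1} + \binom{n-1}{j} = \binom{n}{j}$, yielding the coefficients $\binom{n}{j}$ and $\binom{n}{j-1}$ appearing in (\ref{forvolume}).

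The only remaining subtlety is that the reindexing produces a leftover term proportional to $S^{n-1} \di\alpha_0^{n-1} \wedge \eta$, outside the claimed range $1 \leq j \leq n-1$. This $(2n-1)$-form vanishes pointwise, since $\imath_{R_{\alpha_0}}(\di\alpha_0^{n-1} \wedge \eta) = 0$ thanks to $\imath_{R_{\alpha_0}} \di\alpha_0 = 0$ and $\imath_{R_{\alpha_0}} \eta = 0$, and every top-degree form on the $(2n-1)$-dimensional $M$ killed by $\imath_{R_{\alpha_0}}$ must vanish: any top form is a scalar multiple of $\alpha_0 \wedge \di\alpha_0^{n-1}$, whose contraction with $R_{\alpha_0}$ is the nowhere-zero form $\di\alpha_0^{n-1}$. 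The main obstacle is therefore just keeping the binomial bookkeeping straight; no further ideas are required.
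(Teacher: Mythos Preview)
Your proposal is correct and follows essentially the same route as the paper's proof. Both arguments drop $\di f$ via Stokes, expand $(\di S\wedge\alpha_0 + S\,\di\alpha_0 + \di\eta)^{n-1}$ using $(\di S\wedge\alpha_0)^2=0$, wedge with $S\alpha_0+\eta$ to obtain three families, then convert the family $S^{k}\alpha_0\wedge\di\alpha_0^{k-1}\wedge\di\eta^{n-k}$ by the identity $\alpha_0\wedge\di\alpha_0^{k-1}\wedge\di\eta^{n-k}=\di\alpha_0^{k}\wedge\eta\wedge\di\eta^{n-k-1}-\di(\alpha_0\wedge\di\alpha_0^{k-1}\wedge\eta\wedge\di\eta^{n-k-1})$ together with Stokes, merge coefficients via Pascal, and kill the boundary term $S^{n-1}\di\alpha_0^{n-1}\wedge\eta$ using $\imath_{R_{\alpha_0}}\di\alpha_0=\imath_{R_{\alpha_0}}\eta=0$. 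The only cosmetic difference is that the paper writes out the pointwise exact-form identity before integrating, whereas you phrase the same step as an integration by parts against $\di\eta^{n-k}=\di(\eta\wedge\di\eta^{n-k-1})$; the content is identical.
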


\begin{proof}
	We set
	\[
	\gamma:=S\alpha_0+\eta,
	\]
	so that $\beta=\gamma+\di f$. Then $\di\gamma=\di\beta$ and
	\[
	\beta\wedge \di\beta^{n-1}=\gamma\wedge\di\gamma^{n-1}+\di\bigl(f\di\gamma^{n-1}\bigl).
	\]
	By Stokes Theorem
	\begin{equation}\label{e:ab}
	\int_M\beta\wedge\di\beta^{n-1}=\int_M\gamma\wedge\di\gamma^{n-1}
	\end{equation}
	and we will now compute the right-hand side of this equality. The differential of $\gamma$ is the two-form
	\[
	\di\gamma = \di S \wedge \alpha_0 + S \di\alpha_0 + \di\eta,
	\]
	and its $(n-1)$-th wedge power is the $(2n-2)$-form
	\[
	\begin{split}
	\di\gamma^{n-1} &= (n-1) \di S \wedge \alpha_0 \wedge ( S \di\alpha_0 + \di\eta)^{n-2} + ( S \di\alpha_0 + \di\eta)^{n-1} \\ &= (n-1) \di S \wedge \alpha_0 \wedge \sum_{j=0}^{n-2} \binom{n-2}{j} S^j \di\alpha_0^j \wedge \di\eta^{n-2-j} +  \sum_{j=0}^{n-1} \binom{n-1}{j} S^j \di\alpha_0^j \wedge \di\eta^{n-1-j} \\ &=  \sum_{j=0}^{n-2}\frac{n-1}{j+1}  \binom{n-2}{j} \di(S^{j+1}) \wedge \alpha_0 \wedge \di\alpha_0^j \wedge \di\eta^{n-2-j} +  \sum_{j=0}^{n-1} \binom{n-1}{j} S^j \di\alpha_0^j \wedge \di\eta^{n-1-j} \\ &=  \sum_{j=0}^{n-2} \binom{n-1}{j+1} \di(S^{j+1}) \wedge \alpha_0 \wedge \di\alpha_0^j \wedge \di\eta^{n-2-j} +  \sum_{j=0}^{n-1} \binom{n-1}{j} S^j \di\alpha_0^j \wedge \di\eta^{n-1-j}.
	\end{split}
	\]
	Wedging this form with $\gamma$ we obtain the $(2n-1)$-form
	\begin{equation}
	\label{lunga1}
	\begin{split}
	\gamma \wedge \di\gamma^{n-1} = & \sum_{j=0}^{n-1} \binom{n-1}{j} S^{j+1} \alpha_0 \wedge \di\alpha_0^j \wedge \di\eta^{n-1-j} \\ & + \sum_{j=0}^{n-2} \binom{n-1}{j+1} \di(S^{j+1}) \wedge \alpha_0 \wedge \di\alpha_0^j \wedge \eta \wedge \di\eta^{n-2-j} \\ & + \sum_{j=0}^{n-1} \binom{n-1}{j} S^j \di\alpha_0^j \wedge \eta \wedge  \di\eta^{n-1-j}.
	\end{split}
	\end{equation}
	The forms with $j$ different from $n-1$ in the first sum above can be rewritten as
	\[
	\alpha_0 \wedge \di\alpha_0^j \wedge \di\eta^{n-1-j} = \di\alpha_0^{j+1} \wedge \eta \wedge \di\eta^{n-2-j} - \di ( \alpha_0 \wedge \di\alpha_0^j \wedge \eta \wedge \di\eta^{n-2-j} ) .
	\]
	Therefore, the first sum in (\ref{lunga1}) can be rewritten as
	\[
	\begin{split}
	\mbox{first sum in (\ref{lunga1})} &= S^n \alpha_0 \wedge \di\alpha_0^{n-1} + \sum_{j=0}^{n-2} \binom{n-1}{j} S^{j+1} \di\alpha_0^{j+1} \wedge \eta \wedge \di\eta^{n-2-j} \\
	& \quad - \sum_{j=0}^{n-2} \binom{n-1}{j} S^{j+1} \di( \alpha_0 \wedge \di\alpha_0^j \wedge \eta \wedge \di\eta^{n-2-j})  \\ &= S^n \alpha_0 \wedge \di\alpha_0^{n-1} + \sum_{j=1}^{n-1} \binom{n-1}{j-1} S^j \di\alpha_0^j \wedge \eta  \wedge \di\eta^{n-1-j}  \\ & \quad - \sum_{j=0}^{n-2} \binom{n-1}{j} S^{j+1} \di ( \alpha_0 \wedge \di\alpha_0^j \wedge \eta \wedge \di\eta^{n-2-j}).
	\end{split}
	\]
	By plugging the above expression into (\ref{lunga1}) and by summing the first sum of the formula above with the third sum in (\ref{lunga1}), from which we isolate the term with $j=0$, we obtain the identity
	\[
	\begin{split}
	\gamma \wedge \di\gamma^{n-1} &= S^n \alpha_0 \wedge \di\alpha_0^{n-1} + \sum_{j=1}^{n-1} \left( \binom{n-1}{j-1} + \binom{n-1}{j} \right)  S^j \di\alpha_0^j \wedge \eta \wedge \di\eta^{n-1-j}  \\ & \quad +\eta\wedge \di\eta^{n-1} - \sum_{j=0}^{n-2} \binom{n-1}{j} S^{j+1} \di( \alpha_0 \wedge \di\alpha_0^j \wedge \eta \wedge \di\eta^{n-2-j}) \\& \quad +  \sum_{j=0}^{n-2}  \binom{n-1}{j+1} \di(S^{j+1}) \wedge \alpha_0 \wedge \di\alpha_0^j \wedge \eta \wedge \di\eta^{n-2-j} .
	\end{split}
	\]
	Now we examine the first sum in the above expression. The coefficient of its $j$-th term is $\binom{n}{j}$, by the addition formula for binomial coefficients, and the term with $j=n-1$ vanishes, because both $\eta$ and $\di\alpha_0$ vanish on $R_{\alpha_0}$. By incorporating the term $\eta\wedge \di\eta^{n-1}$ into this sum, we get the identity
	\[
	\begin{split}
	\gamma \wedge \di\gamma^{n-1} &= S^n \alpha_0 \wedge \di\alpha_0^{n-1} + \sum_{j=0}^{n-2} \binom{n}{j}  S^j \di\alpha_0^j \wedge \eta \wedge \di\eta^{n-1-j}  \\ & \quad - \sum_{j=0}^{n-2} \binom{n-1}{j} S^{j+1} \di( \alpha_0 \wedge \di\alpha_0^j \wedge \eta \wedge \di\eta^{n-2-j}) \\& \quad +  \sum_{j=0}^{n-2}  \binom{n-1}{j+1}\di(S^{j+1}) \wedge \alpha_0 \wedge \di\alpha_0^j \wedge \eta \wedge \di\eta^{n-2-j} .
	\end{split}
	\]
	We now integrate over $M$ and use Stokes theorem when integrating the second sum. We obtain:
	\[
	\begin{split}
	\int_M \gamma \wedge \di\gamma^{n-1} &= \int_M \Bigl[ S^n \alpha_0 \wedge \di\alpha_0^{n-1} + \sum_{j=0}^{n-2} \binom{n}{j} S^j \di\alpha_0^j \wedge \eta \wedge \di\eta^{n-1-j}  \\ & \quad + \sum_{j=0}^{n-2} \left( \binom{n-1}{j} + \binom{n-1}{j+1} \right) \di(S^{j+1}) \wedge \alpha_0 \wedge \di\alpha_0^j \wedge \eta  \wedge \di\eta^{n-2-j}\Bigl] \end{split}
	\]
	By using again the addition formula for binomial coefficients and by shifting the indices in both sums we find the identity
	\[
	\begin{split}
	\int_M \gamma \wedge \di\gamma^{n-1} &=  \int_M \Bigl[ S^n \alpha_0 \wedge \di\alpha_0^{n-1} + \sum_{j=1}^{n-1} \binom{n}{j-1} S^{j-1} \di\alpha_0^{j-1} \wedge \eta \wedge \di\eta^{n-j}  \\ & \quad + \sum_{j=1}^{n-1} \binom{n}{j} \di(S^j) \wedge \alpha_0 \wedge \di\alpha_0^{j-1} \wedge \eta  \wedge \di\eta^{n-1-j}\Bigl],
	\end{split}
	\]
	that is precisely (\ref{forvolume}) thanks to \eqref{e:ab}.
\end{proof}

\section{The systolic inequality}
\label{systolicsec}

The first aim of this section is to put together Theorem \ref{main2}, Proposition \ref{mainprop1} and Proposition \ref{mainprop2} to prove the local systolic maximality of Zoll contact forms of Theorem \ref{main1}. We follow the argument that we already sketched in the Introduction. 

\begin{proof}[Proof of Theorem \ref{main1}]
Let $C>0$ be an arbitrary constant. Let $\alpha$ be a contact form on $M$ such that $\|\alpha-\alpha_0\|_{C^2}<\delta_0$, where $\delta_0$ is given by Theorem \ref{main2}. Then, we can find a diffeomorphism $u: M \rightarrow M$ such that 
\[
u^* \alpha = S \alpha_0 + \eta + \di f,
\]
where $S\in \Omega^0(M)$ is invariant under the flow of $R_{\alpha_0}$, $f\in \Omega^0(M)$, and $\eta\in \Omega^1(M)$ satisfies 
\[
\imath_{R_{\alpha_0}} \eta = 0 , \qquad \imath_{R_{\alpha_0}} \di \eta = \mathscr{F}[\di S],
\]
for a suitable endomorphism $\mathscr{F}: T^*M \rightarrow T^*M$. Moreover, the bounds 
\begin{equation}\label{e:boundsforthm3}
\max\Big\{ \|S-1\|_{C^{k+1}},\ \|\eta\|_{C^k},\ \|\di\eta\|_{C^{k}},\ \|\mathscr{F}\|_{C^k} \Big\}\leq \omega_k\big(\|\alpha-\alpha_0\|_{C^{k+2}}\big).
\end{equation}
hold for every $k\geq0$. 

We set $\beta:= u^* \alpha$ and observe that it suffices to prove the systolic inequality for $\beta$ because both the volume and the minimal period of Reeb orbits are invariant under diffeomorphisms:
\[
\mathrm{vol}(M,\beta) = \mathrm{vol}(M,\alpha), \qquad T_{\min}(\beta) = T_{\min}(\alpha).
\]
We apply Proposition \ref{mainprop2} to $\beta$ and find functions $p_j:M\to\R$ for $j=1,\ldots,n-1$ with zero average with respect to the volume form $\alpha_0 \wedge \di \alpha_0^{n-1}$ such that
\begin{equation}
\label{ilvolume}
\mathrm{vol}(M,\beta)=\int_Mp(x,S(x))\, \alpha_0\wedge\di\alpha_0^{n-1},
\end{equation}
where $p:M\times\R\to\R$ is defined as 
\[
p(x,s)=s^n+\sum_{j=1}^{n-1}p_j(x)s^j.
\] 
Assume now that $\|\alpha-\alpha_0\|_{C^3}<C$. The last three bounds in (\ref{e:boundsforthm3}) for $k=1$ yield
\begin{equation}
\label{c1bounds}
\max\Big\{\|\eta\|_{C^1},\ \|\di\eta\|_{C^1},\ \|\mathscr{F}\|_{C^1} \Big\}\leq \omega_1(C). 
\end{equation}
Take now $c=1+\omega_1(C)$ and $\epsilon=\frac{n}{2(2^n-n-1)}$ in Proposition \ref{mainprop2} and obtain a corresponding $\delta>0$ such that for every $j=1,\dots,n-1$
\begin{equation}\label{e:pjest}
\max\Big\{\|\eta\|_{C^0},\ \|\di\eta\|_{C^0},\ \|\mathscr{F}\|_{C^0} \Big\}<\delta\quad\Longrightarrow\quad \|p_j\|_{C^0} < \frac{n}{2(2^n-n-1)}.
\end{equation}
We now choose $\delta_C$ such that $\omega_0(\delta_C)<\min\{1/2,\delta\}$, so that for $\|\alpha-\alpha_0\|_{C^2}<\delta_C$ we get
\[
\|S-1\|_{C^0}\leq\omega_0(\delta_C)< 1/2,\qquad \max\Big\{\|\eta\|_{C^0},\ \|\di\eta\|_{C^0},\ \|\mathscr{F}\|_{C^0} \Big\}\leq \omega_0(\delta_C)<\delta
\]
thanks to \eqref{e:boundsforthm3}.
Our choice of $\epsilon$ shows that for every $x\in M$ the function $s\mapsto p(x,s)$ is strictly monotonically increasing on the interval $[1/2,+\infty)$. Indeed, for every $x\in M$ and $s\geq 1/2$ we have
\[
\begin{split}
\frac{\partial p}{\partial s} (x,s) &= n s^{n-1} + \sum_{j=1}^{n-1} j p_j(x) s^{j-1} = s^{n-1} \left( n + \sum_{j=1}^{n-1} j p_j(x) \frac{1}{s^{n-j}} \right) \\ &\geq  s^{n-1} \left( n - \sum_{j=1}^{n-1} j 2^{n-j} \|p_j\|_{C^0}\right) \geq s^{n-1} \left( n -  \max_{j\in \{1,\dots,n-1\}} \|p_j\|_{C^0}   \sum_{j=1}^{n-1} j 2^{n-j} \right)\\&=s^{n-1} \left( n - 2 (2^n-n-1) \max_{j\in \{1,\dots,n-1\}} \|p_j\|_{C^0} \right),
\end{split}
\]
and the latter quantity is strictly positive because of (\ref{e:pjest}). 

In particular,  the function $s\mapsto p(x,s)$ is strictly monotonically increasing on the interval $[\min S,\max S]$, which is contained in $[1/2,3/2]$, and (\ref{ilvolume}) yields the inequality
\begin{equation}
\label{ines1}
\mathrm{vol}(M,\beta) =  \int_M p(x,S(x))\, \alpha_0 \wedge \di\alpha_0^{n-1} \geq \int_M p(x,\min S)\, \alpha_0 \wedge \di\alpha_0^{n-1},
\end{equation}
with equality if and only if $S(x)\equiv \min S$, which happens exactly when $S$ is constant. Since the functions $p_j$ have zero average, the latter quantity equals
\[
\int_M p(x,\min S)\, \alpha_0 \wedge \di\alpha_0^{n-1} = \int_M (\min S)^n \alpha_0 \wedge \di\alpha_0^{n-1} = (\min S)^n \, \mathrm{vol}(M,\alpha_0).
\]
By Proposition \ref{mainprop1}, the Reeb flow of $\beta$ has a closed orbit of period $(\min S) T_{\min} (\alpha_0)$. Therefore, $T_{\min}(\beta) \leq (\min S) T_{\min} (\alpha_0)$, and we deduce the inequality
\begin{equation}
\label{ines3}
\mathrm{vol}(M,\beta) \geq (\min S)^n \, \mathrm{vol}(M,\alpha_0)\geq \frac{T_{\min}(\beta)^n}{T_{\min}(\alpha_0)^n} \, \mathrm{vol}(M,\alpha_0),
\end{equation}
which can be rewritten as
\begin{equation}
\label{systolicineq}
\rho_{\mathrm{sys}} (M,\beta) :=\frac{T_{\min}(\beta)^n}{\mathrm{vol}(M,\beta)} \leq \frac{T_{\min}(\alpha_0)^n}{\mathrm{vol}(M,\alpha_0)}=\rho_{\mathrm{sys}} (M,\alpha_0).
\end{equation}
If equality holds in (\ref{systolicineq}), then it must hold also in (\ref{ines1}) and hence $S$ is constant. By Proposition \ref{mainprop1} $\beta$ is Zoll. Conversely, assume that $\beta$ is Zoll. Then all of its orbits have the same minimal period. By Proposition \ref{mainprop1} $S$ is constant. In this case, the inequalities in (\ref{ines1}) and in (\ref{ines3}) are equalities. Therefore, (\ref{systolicineq}) is an equality. This concludes the proof of the theorem.
\end{proof}

We conclude this section by discussing a lower bound for the maximal period of ``short'' periodic orbits that can be proven by an easy modification of the argument described above. 

Recall that $\sigma_{\mathrm{prime}}(\alpha)$ denotes the prime spectrum of the contact form $\alpha$, i.e.\ the set of the periods of all its non-iterated closed Reeb orbits. Denote by $T_0$ the common period of the orbits of the Zoll contact form $\alpha_0$ and fix some number $\tau > T_0$. By Corollary \ref{vpina} we can find a $C^2$-neighborhood $\mathscr{U}_{\tau}$ of $\alpha_0$ in the space of contact forms on $M$ such that for every $\alpha\in \mathscr{U}_{\tau}$ the set  $\sigma_{\mathrm{prime}}(\alpha)$ has non-empty intersection with the interval $(0,\tau]$. Therefore, the function
\[
T_{\max}(\alpha,\tau) := \max(  \sigma_{\mathrm{prime}}(\alpha)\cap (0,\tau])
\]
is well defined on $\mathscr{U}_{\tau}$. Then an easy modification of the above proof allows us to show the following lower bound for $T_{\max}(\alpha,\tau)$.

\begin{thm}
\label{diathm}
Let $\alpha_0$ be a Zoll contact form on a closed manifold $M$ with orbits of period $T_0$ and let $\tau> T_0$. Then for all $C>0$ there exists $\delta_{\tau,C}>0$ such that the $C^3$-neighborhood 
\[
 \mathscr N_{\tau,C}:=\Big\{\alpha\in \Omega^1(M)  \Big|\ \Vert\alpha-\alpha_0\Vert_{C^2}<\delta_{\tau,C},\ \Vert\alpha-\alpha_0\Vert_{C^3}<C \Big\}
 \]
 of $\alpha_0$ is contained in $\mathscr{U}_{\tau}$ and for every $\alpha \in  \mathscr N_{\tau,C}$ we have
\[
\frac{T_0^n}{\mathrm{vol}(M,\alpha_0)} \leq \frac{T_{\max}(\alpha,\tau)^n}{\mathrm{vol}(M,\alpha)},
\]
with equality if and only if $\alpha$ is Zoll. \hfill\qed 
\end{thm}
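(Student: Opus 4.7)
The plan is to follow exactly the same blueprint as the proof of Theorem \ref{main1}, but to extract the inequality from $\max S$ rather than $\min S$, thereby reversing the direction of the systolic-type bound. The key observation is that Proposition \ref{mainprop1} furnishes a closed orbit of $R_{\beta}$ of period $(\max S)\, T_0$ corresponding to the maximum of $\widehat{S}$, and that the polynomial $p(x,\cdot)$ from Proposition \ref{mainprop2}, being strictly monotonically increasing near $s=1$, yields the upper volume bound
\[
\mathrm{vol}(M,\beta) = \int_M p(x,S(x))\, \alpha_0 \wedge \di \alpha_0^{n-1} \leq (\max S)^n\, \mathrm{vol}(M,\alpha_0).
\]

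First, I would set up the neighborhood $\mathscr{N}_{\tau,C}$. Given $C>0$ and $\tau>T_0$, choose $\delta_{\tau,C}\leq\delta_C$ (with $\delta_C$ from Theorem \ref{main1}) small enough that for every $\alpha\in\mathscr{N}_{\tau,C}$ the following four conditions hold simultaneously: (a) Theorem \ref{main2} applies, supplying a diffeomorphism $u$ and a normal form $\beta:=u^*\alpha=S\alpha_0+\eta+\di f$; (b) $\mathscr{N}_{\tau,C}\subset\mathscr{U}_{\tau}$, so that $T_{\max}(\alpha,\tau)$ is defined; (c) $\max S \leq \tau/T_0$, so the orbit produced by the variational principle lies in the spectral window $(0,\tau]$; and (d) for every $x\in M$ the function $s\mapsto p(x,s)$ is strictly monotonically increasing on $[\min S,\max S]$. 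Condition (c) is obtained from the bound $\|S-1\|_{C^0}\leq\omega_0(\|\alpha-\alpha_0\|_{C^2})$ from Theorem \ref{main2}, while condition (d) is obtained as in the proof of Theorem \ref{main1}, using Proposition \ref{mainprop2} with an appropriately small $\epsilon$, the $C^1$-bounds on $\eta$, $\di\eta$, $\mathscr{F}$ coming from the $C^3$-control $\|\alpha-\alpha_0\|_{C^3}<C$, and the $C^0$-smallness of those same quantities coming from $\|\alpha-\alpha_0\|_{C^2}<\delta_{\tau,C}$.

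Next, I would apply Proposition \ref{mainprop1} at a maximum point $b_{\max}\in B$ of $\widehat{S}$: the circle $\pi^{-1}(b_{\max})$ is a closed orbit of $R_{\beta}$ of period $\widehat{S}(b_{\max})\, T_0=(\max S)\, T_0$, and its image $u(\pi^{-1}(b_{\max}))$ is therefore a closed orbit of $R_{\alpha}$ of the same period. Since $u$ is close to the identity, this orbit is an embedded circle traversed exactly once by the Reeb flow in time $(\max S)\, T_0$, so $(\max S)\, T_0$ belongs to $\sigma_{\mathrm{prime}}(\alpha)$; by condition (c) it also belongs to $(0,\tau]$, and hence
\[
T_{\max}(\alpha,\tau) \geq (\max S)\, T_0.
\]
Combining this with the volume bound displayed at the outset yields
\[
\mathrm{vol}(M,\alpha)=\mathrm{vol}(M,\beta) \leq (\max S)^n\, \mathrm{vol}(M,\alpha_0) \leq \frac{T_{\max}(\alpha,\tau)^n}{T_0^n}\, \mathrm{vol}(M,\alpha_0),
\]
which rearranges to the desired inequality.

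For the equality case, if equality holds in the final estimate, then in particular
\[
\int_M p(x,S(x))\, \alpha_0\wedge\di\alpha_0^{n-1} = \int_M p(x,\max S)\, \alpha_0\wedge\di\alpha_0^{n-1},
\]
and the strict monotonicity of $p(x,\cdot)$ forces $S(x)\equiv \max S$; hence $S$ is constant and Proposition \ref{mainprop1} implies that $\alpha$ is Zoll. Conversely, if $\alpha$ is Zoll with common period $T$, then $S\equiv T/T_0$ is constant, $T\leq \tau$ by condition (c), $T_{\max}(\alpha,\tau)=T$, and both inequalities above become equalities. I do not foresee a serious obstacle here: the only slightly delicate step is the verification that $(\max S)\, T_0$ is genuinely a \emph{prime} period and not a proper divisor of one, which is however immediate once one observes that $u(\pi^{-1}(b_{\max}))$ is an embedded closed curve traversed once by $R_{\alpha}$ in time $(\max S)\, T_0$.
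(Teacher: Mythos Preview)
Your proposal is correct and follows essentially the same route as the paper's own argument: choose $\delta_{\tau,C}$ small enough that $\|S-1\|_{C^0}<\tau/T_0-1$ so the orbit at $\max\widehat{S}$ has period $(\max S)T_0<\tau$, and then replace the inequality (\ref{ines1}) by its reverse form $\mathrm{vol}(M,\beta)\leq\int_M p(x,\max S)\,\alpha_0\wedge\di\alpha_0^{n-1}$ via the strict monotonicity of $p(x,\cdot)$. Your added remark that $(\max S)T_0$ is a prime period (because $u(\pi^{-1}(b_{\max}))$ is an embedded circle traversed once) is a reasonable explicit check that the paper leaves implicit.
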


Indeed, in order to get this bound it is enough choose $\delta_{\tau,C}$ so small that
\[
\|S-1\|_{C^0} < \frac{\tau}{T_0}  -1,
\]
which implies that a circle at which $S$ achieves its maximum is a closed orbit of $R_{\beta}$ of period less than $\tau$, and to replace (\ref{ines1}) by the inequality
\[
\mathrm{vol}(M,\beta) =  \int_M p(x,S(x))\, \alpha_0 \wedge \di\alpha_0^{n-1} \leq \int_M p(x,\max S)\, \alpha_0 \wedge \di\alpha_0^{n-1},
\]
which is an equality if and only if $S$ is constant. 

\section{Convex domains}
\label{viterbo-sec}

Endow $\R^{2n}$ with coordinates $(x_1,y_1,\dots,x_n,y_n)$, with the Liouville one-form
\[
\lambda_0 := \frac{1}{2} \sum_{j=1}^{n} (x_j \, \di y_j - y_j \, \di x_j),
\]
and with the symplectic form
\[
\omega_0 = \di\lambda_0 = \sum_{j=1}^{n} \di x_j \wedge \di y_j.
\] 
We shall use also the standard identification $\R^{2n} \cong \C^n$ given by $z_j = x_j + i y_j$. The restriction of $\lambda_0$ to the unit sphere $S^{2n-1}\subset \R^{2n}$ is denoted by $\alpha_0$. Its Reeb flow is the Hopf flow
\[
(t,z) \mapsto e^{2i t} z, \qquad \forall (t,z) \in \R \times S^{2n-1},
\]
all of whose orbits are closed with period $\pi$. The contact volume of $(S^{2n-1},\alpha_0)$ is $\pi^n$, so $\alpha_0$ is Zoll with systolic ratio 1.

By starshaped smooth domain we mean here an open set of the form
\[
A_f:= \{ r z \mid z\in S^{2n-1}, \; 0 \leq r < f(z) \},
\]
where $f: S^{2n-1} \rightarrow \R$ is a smooth positive function. With this notation, the unit ball $B^{2n}$ of $\R^{2n}$ is the set $A_1$.
The $C^k$-distance of the starshaped smooth domains $A_f$ and $A_g$ is by definition the $C^k$-distance of the smooth functions $f$ and $g$ on $S^{2n-1}$. 

The one-form $\lambda_0$ restricts to a contact form $\alpha_{A_f}$ on the boundary of the starshaped domain $A_f$, and the radial projection
\[
\rho: S^{2n-1} \rightarrow \partial A_f, \qquad z \mapsto f(z) z,
\]
pulls this contact form back to the contact form $f^2 \alpha_0$ on $S^{2n-1}$:
\[
\rho^* \bigl( \alpha_{ A_f} \bigr) = f^2 \alpha_0.
\]
Therefore, this pull-back is $C^k$-close to $\alpha_0=\alpha_{B^{2n}}$ whenever $A_f$ is $C^k$-close to $B^{2n}$. The Reeb vector field of $\alpha_{A_f}$ is a non-vanishing section of the characteristic line bundle of the hypersurface $\partial A_f$, which is defined as the kernel of $\omega_0|_{TA_f}$, or equivalently as the line bundle $iN \partial A_f$, where $N \partial A_f$ denotes the normal bundle of $\partial A_f$ in $\C^n$.

The aim of this section is to prove Propositions \ref{EHZ=cyl} and \ref{symplectic} from the introduction, thus concluding the proof of the perturbative case of Viterbo's conjecture stated in Corollary \ref{cor2}. Both proofs make use of generating functions. We briefly recall here the kind of generating functions that we are going to use and some results about them.

The symplectic vector space $(\C^n \times \C^n, \omega_0 \oplus -\omega_0)$ can be identified with the cotangent bundle $T^* \C^n = \C^n \times (\C^n)^*$ by the linear symplectomorphism
\begin{equation}
\label{linearsymp}
\Phi: \C^n \times \C^n \rightarrow T^* \C^n, \qquad (z,Z) \mapsto (q,p):= \left( \frac{z+Z}{2}, i (z-Z) \right).
\end{equation}
Here, $T^* \C^n$ is endowed with its standard symplectic structure $\di p\wedge \di q$, where $q\in \C^n$, $p\in (\C^n)^*$, and the dual space $(\C^n)^*$ is identified with $\C^n$ by the standard Euclidean product on $\C^n$. The linear symplectomorphism $\Phi$ maps the diagonal $\Delta$ of $\C^n\times \C^n$ to the zero-section of $T^* \C^n$. It is an explicit linear realization of the Weinstein tubular neighborhood theorem for the Lagrangian submanifold $\Delta$ of $(\C^n \times \C^n, \omega_0 \oplus -\omega_0)$.

The graph of a symplectomorphism $\varphi: \C^n \rightarrow \C^n$ is a Lagrangian submanifold of $(\C^n \times \C^n, \omega_0 \oplus -\omega_0)$ and hence gets mapped to a Lagrangian submanifold of $T^* \C^n$ by $\Phi$.  If we assume that
\[
\| \di\varphi(z) - \mathrm{id} \| < 2 \qquad \forall z \in \C^n,
\]
by the Banach fixed point theorem the equation 
\[
q = \frac{z+ \varphi(z)}{2}
\]
can be solved uniquely for $z$, and hence $\Phi(\mathrm{graph} \;\varphi)$ is the graph of a smooth map from $\C^n$ to $(\C^n)^*$, which by the Lagrangian condition is a closed one-form on $\C^n$. A closed one-form on $\C^n$ is necessarily exact, and we deduce the existence of a smooth function $S: \C^n \rightarrow \R$ such that
\begin{equation}
\label{genfun}
i ( z - \varphi(z)) = \nabla S \left( \frac{z+\varphi(z)}{2} \right) \qquad \forall z\in \C^n.
\end{equation}
The function $S$ is called generating function for $\varphi$ and is uniquely defined up to an additive constants. A simple bootstrap argument shows that
\begin{equation}
\label{boundoS}
\|\nabla S\|_{C^k} \leq \omega_k ( \|\varphi - \mathrm{id} \|_{C^k}) \qquad \forall k\geq 0,
\end{equation}
for suitable moduli of continuity $\omega_k$. In particular, $S$ is smooth if $\varphi$ is smooth.

Conversely, if $S: \C^n \rightarrow \R$ is a smooth function such that
\[
\|\nabla^2 S(z) \| < 2 \qquad \forall z \in \C^n,
\]
the Banach fixed point theorem implies that equation (\ref{genfun}) uniquely determines a map $\varphi: \C^n \rightarrow \C^n$, which is smooth because of the smooth dependence of the fixed point in the parametric Banach fixed point theorem. More precisely, a simple bootstrap argument shows that
\begin{equation}
	\label{boundophi}
	\|\varphi - \mathrm{id} \|_{C^k} \leq \omega_k (\|\nabla S\|_{C^k} ) \qquad \forall k\geq 0,
\end{equation}
for suitable moduli of continuity $\omega_k$. Since $\Phi(\mathrm{graph} \;\varphi)$ is the graph of the closed one-form $\nabla S$, the map $\varphi$ is a symplectomorphism. Its inverse $\varphi^{-1}:\C^n\rightarrow\C^n$ is obtained by solving the equation
\[
i(\varphi^{-1}(w)-w)=\nabla S\left( \frac{\varphi^{-1}(w)+w}{2} \right) \qquad \forall w\in \C^n.
\]

The proof of Proposition \ref{EHZ=cyl} builds on the following lemma, in which $\gamma_0$ denotes the closed curve
\begin{equation}
\label{gamma0}
\gamma_0 : \R/\Z \rightarrow \C^n, \qquad \gamma_0(t) := (e^{2\pi i t}, 0,\dots,0),
\end{equation}
along which the one-form $\lambda_0$ has integral $\pi$.

\begin{lem}
\label{spostami}
There exists $\delta>0$ such that if $\gamma: \R/\Z \rightarrow \C^n$ is a smooth closed curve with $\|\gamma-\gamma_0\|_{C^2} < \delta$ and
\begin{equation}
\label{stessointegrale}
\int_{\gamma} \lambda_0 = \int_{\gamma_0} \lambda_0 = \pi,
\end{equation}
then there exists a compactly supported symplectomorphism $\varphi: \C^{n} \rightarrow \C^{n}$ such that $\varphi(\gamma(t))=\gamma_0(t)$ for every $t\in \R/\Z$ and
\begin{equation}
\label{klein}
\|\varphi - \mathrm{id} \|_{C^k} \leq \omega_k(\|\gamma-\gamma_0\|_{C^{k+1}}), \qquad \forall k\geq 0,
\end{equation}
for a suitable sequence of moduli of continuity $\omega_k$.
\end{lem}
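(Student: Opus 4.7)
The plan is to construct $\varphi$ as the symplectomorphism associated to a smooth, compactly supported generating function $S:\C^n\to\R$, using the framework recalled around \eqref{genfun}. Setting
\[
\mu(t):=\frac{\gamma(t)+\gamma_0(t)}{2},\qquad v(t):=i\bigl(\gamma(t)-\gamma_0(t)\bigr),
\]
the required identity $\varphi(\gamma(t))=\gamma_0(t)$ translates, through \eqref{genfun} and the Banach fixed point uniqueness that underlies \eqref{boundophi}, into the prescribed-gradient condition $\nabla S(\mu(t))=v(t)$ for every $t\in\R/\Z$, provided $\|\nabla^2 S\|_{C^0}<2$ so that the implicit equation \eqref{genfun} uniquely determines $\varphi$.

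For such an $S$ to exist the one-form $t\mapsto\langle v(t),\dot\mu(t)\rangle\,\di t$ must integrate to zero on $\R/\Z$. This is precisely where the hypothesis \eqref{stessointegrale} enters: expanding the product using the identity $\langle iu,w\rangle=\omega_0(u,w)$ produces four summands; two combine into $\int_\gamma\lambda_0-\int_{\gamma_0}\lambda_0=0$, while the two cross terms cancel after an integration by parts exploiting the periodicity of $\gamma$ and $\gamma_0$. This compatibility allows me to define $\sigma(t):=\int_0^t\langle v(s),\dot\mu(s)\rangle\,\di s$ as a smooth function on $\R/\Z$.

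For $\|\gamma-\gamma_0\|_{C^2}$ small enough, $\mu$ is an embedded $C^2$-loop admitting a tubular neighborhood $U\subset\C^n$ of uniform size with a smooth closest-point projection $\pi:U\to\R/\Z$ onto $\mu$. Fixing a cutoff $\chi\in C_c^\infty(\C^n)$ equal to $1$ on a smaller common neighborhood of $\gamma_0$ contained in every such $U$, I set
\[
S(x):=\chi(x)\Bigl[\sigma(\pi(x))+\bigl\langle v(\pi(x)),\,x-\mu(\pi(x))\bigr\rangle\Bigr].
\]
A direct computation at $x=\mu(t)$ — where $\chi=1$, $\nabla\chi=0$, $x-\mu(\pi(x))=0$, and $\sigma'(t)=\langle v(t),\dot\mu(t)\rangle$ — yields $\nabla S(\mu(t))=v(t)$ as needed. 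Standard estimates in tubular coordinates give bounds of the form $\|S\|_{C^{k+1}}\leq\omega_k(\|\gamma-\gamma_0\|_{C^{k+1}})$, the one-derivative loss coming from $\sigma$ being an antiderivative of a $\dot\mu$-containing integrand; in particular, for $\delta$ small one has $\|\nabla^2 S\|_{C^0}<2$.

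Thus $S$ generates a unique smooth symplectomorphism $\varphi$ via \eqref{genfun}, which is compactly supported because $\nabla S$ is, and \eqref{boundophi} gives the claimed bound \eqref{klein}. Finally, since $\nabla S(\mu(t))=v(t)$, the pair $(\gamma(t),\gamma_0(t))$ satisfies \eqref{genfun} with $z=\gamma(t)$, hence the uniqueness of $\varphi(z)$ forces $\varphi(\gamma(t))=\gamma_0(t)$. I expect the main technical obstacle to be the construction of $S$ with prescribed gradient along $\mu$ together with uniform $C^{k+1}$-bounds and compact support; conceptually, however, the decisive step — the step that actually \emph{uses} the $\lambda_0$-integral hypothesis — is the verification that $\int_0^1\langle v,\dot\mu\rangle\,\di t=0$.
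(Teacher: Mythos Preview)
Your strategy is the paper's strategy: build $\varphi$ from a compactly supported generating function $S$ with $\nabla S(\mu(t))=v(t)$ along the midpoint curve $\mu=\tfrac{1}{2}(\gamma+\gamma_0)$, observe that the obstruction $\int_0^1\langle v,\dot\mu\rangle\,\di t$ vanishes precisely by \eqref{stessointegrale}, then extend $S$ off the curve with cutoffs and invoke \eqref{boundophi}. Your gradient check at $x=\mu(t)$ is correct, and the cancellation $\sigma'(\pi)=\langle v(\pi),\dot\mu(\pi)\rangle$ indeed simplifies $\nabla S$ to $v(\pi)+\langle v'(\pi),x-\mu(\pi)\rangle\nabla\pi$ near the curve.

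The only difference from the paper is in the off-curve extension, and here your construction loses one extra derivative. The closest-point projection $\pi$ onto $\mu$ is determined implicitly by $\langle x-\mu(\pi),\dot\mu(\pi)\rangle=0$, so
\[
\nabla\pi=\frac{\dot\mu(\pi)}{|\dot\mu(\pi)|^2-\langle x-\mu(\pi),\ddot\mu(\pi)\rangle},
\]
which already involves $\ddot\mu$; more generally $\|\pi\|_{C^j}$ is controlled by $\|\mu\|_{C^{j+1}}$. The term $\langle v'(\pi),x-\mu(\pi)\rangle\nabla\pi$ in $\nabla S$ therefore has a $C^k$ bound requiring $\|\mu\|_{C^{k+2}}$, and your argument actually delivers $\|\varphi-\mathrm{id}\|_{C^k}\le\omega_k(\|\gamma-\gamma_0\|_{C^{k+2}})$ rather than \eqref{klein}. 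The paper avoids this by \emph{not} using a nearest-point retraction: it exploits that $\gamma_0$ sits in $\C\times\{0\}$ and parametrizes a neighborhood by $\psi(r,t,w)=r\,\gamma_1(t)+(0,w)$, a diffeomorphism that depends on $\gamma_1$ only through its values, so $\|\psi-\psi_0\|_{C^k}$ and $\|\psi^{-1}-\psi_0^{-1}\|_{C^k}$ are controlled by $\|\gamma-\gamma_0\|_{C^k}$ with no loss. The paper then writes $S$ explicitly in $(r,t,w)$ coordinates and reads off $\|S\|_{C^k}\le\omega_k(\|\gamma-\gamma_0\|_{C^k})$.

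So your proof is conceptually complete but the phrase ``standard estimates in tubular coordinates give $\|S\|_{C^{k+1}}\le\omega_k(\|\gamma-\gamma_0\|_{C^{k+1}})$'' is where the discrepancy hides. To match the stated bound, replace the $\mu$-dependent closest-point projection by a chart depending on $\mu$ algebraically (as the paper does), or by the fixed projection onto $\gamma_0$ followed by a reparametrization; otherwise your argument proves the lemma with $C^{k+2}$ in place of $C^{k+1}$, which would push the downstream Proposition~\ref{EHZ=cyl} from a $C^3$- to a $C^4$-neighborhood.
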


\begin{proof}
We shall construct the symplectomorphism $\varphi$ by means of a suitable generating function $S: \C^n \rightarrow \R$ as in (\ref{genfun}). The curve
\[
\gamma_1: \R/\Z \rightarrow \C^n, \qquad \gamma_1(t) := \frac{\gamma(t) + \gamma_0(t)}{2},
\]
is an embedding when $\|\gamma-\gamma_0\|_{C^1}$ is small enough. By (\ref{genfun}), the condition $\varphi(\gamma(t))=\gamma_0(t)$ requires us to prescribe the gradient of $S$ on the image of $\gamma_1$ as follows:
\begin{equation}
\label{vuolsi}
\nabla S (\gamma_1(t)) = i ( \gamma(t) - \gamma_0(t)) \qquad \forall t\in \R/\Z.
\end{equation}
The necessary condition for being able to find a function $S$ satisfying the above identity is
\begin{equation}
\label{condizione}
\int_{\R/\Z}  \big[ i ( \gamma(t) - \gamma_0(t))\big] \cdot \gamma_1'(t)\, dt = 0.
\end{equation}
This condition holds thanks to the assumption (\ref{stessointegrale}) and turns out to be sufficient, as we now show. Thanks to (\ref{condizione}), the function
\[
s(t) := \int_0^t  \big[ i ( \gamma(\tau) - \gamma_0(\tau)) \big] \cdot \gamma_1'(\tau) \, d\tau,
\]
is 1-periodic and hence defines a smooth function on $\R/\Z$. Note that
\begin{equation}
\label{bds}
\|s\|_{C^k} \leq \omega_k( \|\gamma-\gamma_0\|_{C^k}) \qquad \forall k\geq 1,
\end{equation}
for suitable moduli of continuity $\omega_k$.

Denote by $P_1: \C^n \rightarrow \C$ and $P_2: \C^n \rightarrow \C^{n-1}$ the projections that are associated to the splitting 
$\C^n = \C \times \C^{n-1}$. Choose $\delta_0>0$ small enough so that for every smooth curve $\gamma: \R/\Z \rightarrow \C^n$ with $\|\gamma-\gamma_0\|_{C^1} < \delta_0$ the  projection $P_1 \circ \gamma_1$ is an embedding into $\C\setminus \{0\}$ which is everywhere transverse to the rays emanating from the origin. Then, if $\|\gamma - \gamma_0\|_{C^1} < \delta_0$, the map
\[
\psi:(0,+\infty) \times \R/\Z \times \C^{n-1} \rightarrow (\C \setminus \{0\})  \times \C^{n-1}, \qquad (r,t,w) \mapsto r \,\gamma_1(t) + w,
\]
is a diffeomorphism satisfying
\begin{equation}
\label{diffeovic}
\|\psi-\psi_0\|_{C^k((0,r_0] \times \R/\Z \times \C^{n-1})} \leq \omega_k(\|\gamma-\gamma_0\|_{C^k}),
\end{equation}
for every $r_0>0$, where $\psi_0$ denotes the diffeomorphism
\[
\psi_0(r,t,w) :=  r \,\gamma_0(t) + w.
\]
Let 
\[
\chi_1 : (0,+\infty) \rightarrow \R, \qquad \chi_2 : [0,+\infty) \rightarrow \R,
\]
be smooth compactly supported functions such that
\[
\chi_1(r) = 1 \quad \forall r\in \left[{\textstyle \frac{1}{2}, \frac{3}{2} } \right], 
\qquad \chi_2(r) = 1 \quad \forall r\in [0,1].
\]
The identity
\[
S( r \,\gamma_1(t) + w ) = \chi_1(r) \chi_2(|w|) \bigl( s(t) + (r-1) \big[ i (\gamma(t) - \gamma_0(t)) \big] \cdot \gamma_1(t) + \big[ i P_2 (\gamma(t) - \gamma_0(t))\big]\cdot w \bigr)
\]
defines a smooth compactly supported function $S: \C^n \rightarrow \R$. Differentiating the above identity with respect to $t$, $r$ and $w$ at $r=1$ and $w=0$ we obtain the formulas
\[
\begin{split}
\nabla S(\gamma_1(t)) \cdot \gamma_1'(t) &=  s'(t) = \big[ i ( \gamma(t) - \gamma_0(t)) \big] \cdot \gamma_1'(t), \\
\nabla S(\gamma_1(t)) \cdot \gamma_1(t) &=  \big[ i (\gamma(t) - \gamma_0(t)) \big] \cdot \gamma_1(t), \\
P_2 \nabla S(\gamma_1(t))  &= i P_2 (\gamma(t) - \gamma_0(t)),
\end{split}
\]
which are equivalent to (\ref{vuolsi}). Moreover, the above formula for $S$, (\ref{bds}) and (\ref{diffeovic}) imply the bounds
\begin{equation}
\label{bdS}
\|S\|_{C^k} \leq \omega_k ( \|\gamma-\gamma_0\|_{C^k}) \qquad \forall k\geq 1,
\end{equation}
where we have replaced the moduli of continuity $\omega_k$ by possibly larger ones. Let $\delta\leq \delta_0$ be a positive number such that $\|\gamma-\gamma_0\|_{C^2}< \delta$ implies that $\|\nabla^2 S\|_{C^0} < 2$. If $\|\gamma-\gamma_0\|_{C^2}< \delta$, from what we have seen above we deduce that the identity (\ref{genfun}) defines a smooth symplectomorphism $\varphi: \C^n \rightarrow \C^n$, which is compactly supported because $S$ is compactly supported. The identities (\ref{genfun}) and (\ref{vuolsi}) imply
\[
\varphi(\gamma(t)) = \gamma_0(t)  \qquad \forall t\in \R/\Z.
\]
Finally, the bounds (\ref{klein}) follow from (\ref{boundophi}) and  (\ref{bdS}).
\end{proof}

\begin{rem}
The above lemma is a local specialization of the following global result: The group of compactly supported symplectomorphisms of $\R^{2n}$ acts transitively of the set of smooth embedded closed curves $\gamma: \R/\Z \rightarrow \R^{2n}$ with action $\int_{\gamma} \lambda_0 = a$, for any given number $a\neq 0$. This can be proved either by generating functions or by Moser's homotopy argument, after showing that the set of smooth embedded closed curves of action $a$ is connected. The above proof by generating functions allows us to get the bound (\ref{klein}) without further loss of derivatives.
\end{rem}

We are now ready to prove Proposition \ref{EHZ=cyl} from the Introduction, stating that the Ekeland--Hofer--Zehnder capacity and the cylindrical capacity coincide on smooth convex domains that are $C^3$-close enough to the ball $B^{2n}$.

\begin{proof}[Proof of Proposition \ref{EHZ=cyl}]
Let $\epsilon>0$ be such that any diffeomorphism $\varphi: \R^{2n} \rightarrow \R^{2n}$ satisfying $\|\varphi - \mathrm{id}\|_{C^2} < \epsilon$ maps any starshaped smooth domain $C$ whose $C^2$-distance from $B^{2n}$ is smaller than $\epsilon$ to a smooth starshaped domain which is still convex. The existence of such a positive number $\epsilon$ follows from the fact that $S^{2n-1}$ is positively curved. 

Up to rescaling, it is enough to prove the identity
\begin{equation}
\label{equ}
c_{\mathrm{EHZ}}(C) = c_{\mathrm{out}}(C)
\end{equation}
for all convex smooth domains $C$ that are $C^3$-close enough to $B^{2n}$ and satisfy $T_{\min}(\alpha_C)=\pi$. Let $C$ be such a domain and $\zeta:\R/\pi \Z \rightarrow \partial C$ be a closed Reeb orbit of $\alpha_C$ of minimal period.  We claim that $\zeta$ is $C^3$-close to some Reeb orbit $\zeta_0(t) = e^{2it} z$, $z\in S^{2n-1}$, of $\alpha_0=\alpha_{B^{2n}}$. Indeed, writing $C=A_f$ for some smooth real function $f: S^{2n-1}\rightarrow \R$ which is $C^3$-close to the constant 1 and denoting by $\rho:S^{2n-1} \rightarrow \partial C$ the radial projection, we have that the contact form $\rho^* \alpha_C = f^2 \alpha_0$ is $C^3$-close to $\alpha_0$. This implies that the respective Reeb vector fields are $C^2$ close. By a standard argument involving Gronwall's lemma, $\rho^{-1}\circ \zeta$, which is a $\pi$-periodic closed Reeb orbit of $\rho^*\alpha_C$, is $C^3$-close to $\zeta_0(t)= e^{2it} z$ with $z=\rho^{-1}(\zeta(0))$. By applying the map $\rho: S^{2n-1} \rightarrow \partial C$, which is $C^3$-close to the inclusion $S^{2n-1} \hookrightarrow \C^n$, we conclude that $\zeta$ is $C^3$-close to $\zeta_0$, as claimed.

Up to applying a unitary automorphism of $\C^n$, we may assume that $z=(1,0,\dots,0)$, so that $\zeta(t) = \gamma_0(t/\pi)$, where $\gamma_0$ is the curve defined in (\ref{gamma0}). Thanks to Lemma \ref{spostami}, there is a symplectomorphism $\varphi_C: \C^n \rightarrow \C^n$ mapping $\zeta(\R/\pi \Z)$ to $\gamma_0(\R/\Z)$ which is $C^2$-close to the identity.

We now fix the $C^3$-neighborhood $\mathscr{B}$ of $B^{2n}$ in such a way that every $C\in \mathscr{B}$ has $C^2$-distance smaller than $\epsilon$ from $B^{2n}$ and  the symplectomorphism $\varphi_C$ satisfies $\|\varphi_C-\mathrm{id}\|_{C^2} < \epsilon$. By the above choice of $\epsilon$, for every $C\in \mathscr{B}$ the set
$C':= \varphi_C(C)$ is a convex starshaped smooth domain such that $\gamma_0(\R/\Z)$ is a Reeb orbit of $\alpha_{C'}$ of minimal period $T_{\min}(\alpha_{C'})=\pi$. In particular,  the normal direction to $\partial C'$ at the point $\gamma_0(t)$ is the line  $i \gamma_0'(t) \R$, which is contained in $\C\times \{0\}$, so the tangent space of $\partial C'$ at $\gamma_0(t)$ has the form $\gamma_0'(t)\R \oplus \C^{n-1}$. By convexity, $C'$ is then contained in the cylinder $Z=B^2\times \C^{n-1}$. So we have
\[
c_{\mathrm{out}}(C') \leq \pi  =  T_{\min}(\alpha_{C'}) = c_{\mathrm{EHZ}} (C'),
\]
which implies (\ref{equ}) for $C'$, as $c_{\mathrm{out}}\geq c_{\mathrm{EHZ}}$ is always true. From the fact that $C$ is symplectomorphic to $C'$, we deduce that (\ref{equ}) holds also for $C$.
\end{proof}

We conclude this section by proving Proposition \ref{symplectic} from the Introduction, namely the fact that smooth convex domains that are $C^3$-close enough to the ball $B^{2n}$ and whose Reeb flow on the boundary is Zoll are symplectomorphic to balls.

\begin{proof}[Proof of Proposition \ref{symplectic}]
Consider a starshaped domain $C\subset \R^{2n}=\C^n$ with $\alpha_C=\lambda_0|_{\partial C}$ Zoll in a $C^3$-neighborhood $\mathscr{B}$ of $B^{2n}$, whose size will be determined along the proof. Up to rescaling, it is enough to consider the case in which the Reeb orbits of $\alpha_C$ have period $\pi$. If $C=A_f$ is $C^3$-close to the ball $B^{2n}=A_1$, then the contact form
\[
\rho^*(\alpha_C) = f^2 \alpha_0
\]
on $S^{2n-1}$ is $C^3$-close to the standard contact form $\alpha_0= \alpha_{B^{2n}}$, all of whose orbits are closed with period $\pi$. Here, $\rho: S^{2n-1} \rightarrow \partial C$ denotes the radial projection $z \mapsto f(z)z$. 

Since $\alpha_C$ is Zoll with all orbits of period $\pi$, so is $\rho^*(\alpha_C)$. Proposition \ref{rigid-zoll} implies that if $\mathscr{B}$ is small enough then there exists a diffeomorphism $v: S^{2n-1} \rightarrow S^{2n-1}$ that is $C^1$-close to the identity and satisfies
\[
v^* (\rho^*(\alpha_C)) = \alpha_0. 
\]
The diffeomorphism $\psi:= \rho\circ v: S^{2n-1} \rightarrow \partial C$ is $C^1$-close to the inclusion $S^{2n-1} \hookrightarrow \C^n$ and satisfies
\[
\psi^*( \lambda_0|_{\partial C}) = \lambda_0|_{S^{2n-1}}.
\]
In particular, up to reducing if necessary the size of $\mathscr{B}$, we may assume that
\begin{equation}
\label{psivici}
\left| \frac{z+\psi(z)}{2} \right| \geq \frac{1}{2} \qquad \forall z\in S^{2n-1}.
\end{equation}
We now extend $\psi$ to a positively 1-homogeneous map on the whole $\C^{n}$ by mapping $rz$ with $r>0$ and $z\in S^{2n-1}$ to $r\psi(z)$. This extension is still denoted by
\[
\psi: \C^n \rightarrow \C^n.
\]
It is continuous on $\C^n$, smooth on $\C^n \setminus \{0\}$, maps $B^{2n}$ onto $C$ and satisfies
\[
\psi^* \lambda_0 = \lambda_0 \qquad \mbox{on } \C^n \setminus \{0\}.
\]
In particular, it is a symplectomorphism of $\C^n \setminus \{0\}$ onto itself. In order to conclude, we just need to smoothen it near the origin, by keeping it a symplectomorphism. Such a smoothing can be performed using generating functions, as we shall now explain.

The graph of the symplectomorphism $\psi|_{\C^n\setminus \{0\}}$ is a Lagrangian submanifold of $(\C^n \times \C^n, \omega_0 \oplus -\omega_0)$, and hence gets mapped into a Lagrangian submanifold of $T^* \C^n$ by the map $\Phi$ from (\ref{linearsymp}). The map $\di\psi$ is $0$-homogeneous and is arbitrarily $C^0$-close to the identity, provided that $\mathscr{B}$ is sufficiently small. This implies that if $\mathscr{B}$ is small enough the graph of the symplectomorphism $\psi|_{\C^n\setminus \{0\}}$ is mapped to the image of a positively 1-homogeneous Lagrangian section of $T^* (\C^n \setminus \{0\})$, that is, to the graph of a positively 1-homogeneous closed one-form on $\C^n\setminus \{0\}$. Since $\C^n\setminus \{0\}$ is simply connected (we are assuming that $n>1$, because this proposition is trivially true for $n=1$), this one-form is the differential of a positively 2-homogeneous smooth function $S: \C^n \setminus \{0\} \rightarrow \R$. From (\ref{linearsymp}) we deduce that the symplectomorphism $\psi$ satisfies
\[
i ( z - \psi(z)) = \nabla S \left( \frac{z+\psi(z)}{2} \right) \qquad \forall z\in \C^n\setminus \{0\}.
\]
The Hessian of $S$ is positively $0$-homogeneous and is $C^0$-small, see (\ref{boundoS}). The function $S$ extends continuously to the origin by setting $S(0)=0$, but this extension is in general not smooth. In order to smoothen it, choose a smooth function $\sigma: [0,+\infty) \rightarrow [0,1]$ such that $\sigma(r)=0$ for all $r$ sufficiently small and  $\sigma(r)=1$ for every $r\geq 1/2$. We then define a smooth real function $\tilde{S}$ on $\C^n$ by
\[
\tilde{S}(z) := \sigma(|z|) S(z) \qquad \forall z\in \C^n.
\]
The Hessian $\nabla^2 \tilde{S}$ of $\tilde{S}$ is $C^0$-small when the one of $S$ is $C^0$-small, so up to reducing the size of $\mathscr{B}$ we can assume that
\[
\| \nabla^2 \tilde{S}\|_{C^0} < 2.
\]
As discussed at the beginning of this Section, this implies that the identity
\[
i ( z - \varphi(z)) = \nabla \tilde{S} \left( \frac{z+\varphi(z)}{2} \right) \qquad \forall z\in \C^n,
\]
defines a smooth symplectorphism $\varphi: \C^n \rightarrow \C^n$. Since $\nabla \tilde{S}(z) = \nabla S(z)$ for every $z\in \C^n$ with $|z|\geq 1/2$, inequality (\ref{psivici}) implies that $\varphi(z)=\psi(z)$ for every $z\in S^{2n-1}$. We conclude that $\varphi$ is a symplectomorphism of $\C^n$ mapping $B^{2n}$ onto $C$.
\end{proof}

\section{Shadows of symplectic balls}
\label{shadowssec}

In this section, we wish to prove Corollary \ref{cor3} from the Introduction. Before starting with the proof, we need to discuss the linear symplectic non-squeezing theorem. 

The vector space $\R^{2n}$ is endowed with the standard symplectic form $\omega_0$, with the standard Euclidean product and with the standard complex structure, which is $\omega_0$-compatible and allows us to identify $\R^{2n}$ with $\C^n$. If $0\leq k \leq n$, we have the inclusions
\[
\mathrm{Gr}_{k}(\C^n) \subset \mathrm{Gr}_{2k}(\R^{2n},\omega_0) \subset  \mathrm{Gr}_{2k}(\R^{2n})
\]
of the Grassmannian of complex $k$-subspaces into the Grassmannian of symplectic $2k$-subspaces, and of the latter into the Grassmannian of all real $2k$-subspaces. The smallest and the largest Grassmannians are compact, while the symplectic Grassmannian is an open neighborhood of $\mathrm{Gr}_{k}(\C^n)$ in  $\mathrm{Gr}_{2k}(\R^{2n})$.

The Wirtinger inequality states that
\[
|\omega_0^k[v_1,\dots,v_{2k}]| \leq k! \, |v_1 \wedge \dots \wedge v_{2k}|
\]
for all $2k$-uples of vectors in $\R^{2n}$ and, in the case of linearly independent vectors, the equality holds if and only if the vectors $v_1,\dots, v_{2k}$ span a complex subspace. Therefore, the formula
\begin{equation}
\label{wirtfun}
w(V) := \frac{|\omega_0^k[v_1,\dots,v_{2k}]|}{k! \,|v_1 \wedge \dots \wedge v_{2k}|},
\end{equation}
where $v_1,\dots,v_{2k}$ denotes a basis of $V$, defines a non-negative function on $\mathrm{Gr}_{2k}(\R^{2n})$ which is strictly positive precisely on $\mathrm{Gr}_{2k}(\R^{2n},\omega_0)$ and achieves its maximum 1 precisely at $\mathrm{Gr}_{k}(\C^n)$: For every $V\in \mathrm{Gr}_{2k}(\R^{2n})$ there holds
\begin{equation}
\label{wirtinger}
\begin{split}
w(V) &\geq 0, \mbox{ with equality if and only if } V\notin \mathrm{Gr}_{2k}(\R^{2n},\omega_0), \\
w(V) &\leq 1, \mbox{ with equality if and only if }  V\in \mathrm{Gr}_{k}(\C^n).
\end{split}
\end{equation}
The function $w$ is invariant under unitary transformations.

Given $V\in \mathrm{Gr}_{2k}(\R^{2n},\omega_0)$, we denote by $P_V$ the linear projection onto $V$ along the symplectic orthogonal of $V$. The linear symplectic non-squeezing theorem can be stated in the following way, where $B^{2n}$ denotes the unit ball in $\R^{2n}$.

\begin{thm}
\label{linnonsque}
For every element $V\in \mathrm{Gr}_{2k}(\R^{2n},\omega_0)$ and every linear symplectomorphism $\Phi: \R^{2n} \rightarrow \R^{2n}$, we have
\begin{equation}
\label{volombra}
\mathrm{vol}(P_V \Phi(B^{2n}),\omega_0^k|_V) = \frac{\pi^k}{w(\Phi^{-1}(V))}.
\end{equation}
In particular:
\begin{enumerate}[(i)]
\item $\mathrm{vol}(P_V \Phi(B^{2n}),\omega^k_0|_V)\geq \pi^k$ with equality if and only if $\Phi^{-1}(V) \in \mathrm{Gr}_{k}(\C^n)$. In the equality case, the set $P_V \Phi(B^{2n})$ is linearly symplectomorphic to a $2k$-ball of radius 1:
\begin{equation}
\label{casecplx}
P_V \Phi(B^{2n}) = \Phi(B^{2n} \cap \Phi^{-1} (V)).
\end{equation}
\item The function $V\mapsto \mathrm{vol}(P_V \Phi(B^{2n}),\omega_0^k|_V)$ is coercive on $\mathrm{Gr}_{2k}(\R^{2n},\omega_0)$.
\end{enumerate}
\end{thm}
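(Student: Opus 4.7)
The plan is to prove the volume identity (\ref{volombra}) by a direct computation and then read off (i) and (ii) as easy corollaries. The key first step is to reduce to the case $\Phi=\mathrm{id}$. Setting $W := \Phi^{-1}(V)$, the fact that $\Phi$ is symplectic gives $\Phi(W^{\perp_\omega}) = V^{\perp_\omega}$, and applying $\Phi$ to the decomposition $y = P_W y + (y - P_W y)\in W\oplus W^{\perp_\omega}$ yields the intertwining relation $P_V\circ\Phi = \Phi\circ P_W$. Since $\Phi^*\omega_0 = \omega_0$, this gives
\begin{equation*}
\mathrm{vol}(P_V\Phi(B^{2n}),\omega_0^k|_V) = \mathrm{vol}(P_W(B^{2n}),\omega_0^k|_W),
\end{equation*}
so the task reduces to proving $\mathrm{vol}(P_W(B^{2n}),\omega_0^k|_W) = \pi^k/w(W)$ for any $W\in\mathrm{Gr}_{2k}(\R^{2n},\omega_0)$.

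Fix an orthonormal basis $e_1,\dots,e_{2k}$ of $W$ and set $A_{ij}:=\omega_0(e_i,e_j)$, a $2k\times 2k$ skew-symmetric matrix. From $\omega_0^k[e_1,\dots,e_{2k}] = k!\,\mathrm{Pf}(A)$ and the definition (\ref{wirtfun}) of the Wirtinger function one reads off
\[
w(W) = |\mathrm{Pf}(A)|,\qquad |\det A| = w(W)^2,\qquad \omega_0^k|_W = \pm\, k!\, w(W)\, \mathrm{vol}_W^{\mathrm{eucl}},
\]
where $\mathrm{vol}_W^{\mathrm{eucl}}$ denotes the Euclidean volume form on $W$. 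Since $\ker P_W = W^{\perp_\omega} = J_0(W^\perp)$, its Euclidean orthogonal complement is $J_0 W$, so $\phi := P_W|_{J_0 W}: J_0 W\to W$ is a linear isomorphism; decomposing $y\in B^{2n}$ along $\ker P_W\oplus J_0 W$ shows $P_W(B^{2n}) = \phi(B^{2n}\cap J_0 W)$, an ellipsoid in $W$ of Euclidean volume $|\det\phi|\cdot\pi^k/k!$. To compute $|\det\phi|$, let $M$ be the matrix of $\phi$ with respect to the orthonormal bases $\{J_0 e_i\}$ and $\{e_j\}$; the characterizing property of the symplectic projection, $\omega_0(P_W u,w) = \omega_0(u,w)$ for every $u\in\R^{2n}$ and $w\in W$, applied with $u = J_0 e_i$ and $w = e_j$ gives
\[
(MA)_{ij} = \omega_0\bigl(\phi(J_0 e_i),e_j\bigr) = \omega_0(J_0 e_i,e_j) = -\delta_{ij},
\]
so $MA = -I$ and $|\det\phi| = 1/|\det A| = 1/w(W)^2$. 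Assembling the two pieces yields
\[
\mathrm{vol}(P_W(B^{2n}),\omega_0^k|_W) = k!\,w(W)\cdot\frac{\pi^k}{k!\,w(W)^2} = \frac{\pi^k}{w(W)}.
\]

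With (\ref{volombra}) in hand the corollaries follow at once. The Wirtinger bounds (\ref{wirtinger}) give $0 < w(\Phi^{-1}(V)) \le 1$ with equality exactly when $\Phi^{-1}(V)\in\mathrm{Gr}_k(\C^n)$, producing the sharp lower bound in (i). In the equality case $W = \Phi^{-1}(V)$ is complex, so $W^{\perp_\omega} = W^\perp$, $P_W$ is the Euclidean orthogonal projection, $P_W(B^{2n}) = B^{2n}\cap W$, and applying $\Phi$ yields (\ref{casecplx}). For (ii), the Wirtinger function extends continuously to $\mathrm{Gr}_{2k}(\R^{2n})$ and, again by (\ref{wirtinger}), vanishes precisely outside $\mathrm{Gr}_{2k}(\R^{2n},\omega_0)$, so $V\mapsto\pi^k/w(\Phi^{-1}(V))$ blows up as $V$ leaves every compact subset of the symplectic Grassmannian. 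The only nontrivial step in this plan is the determinant identity $MA = -I$, but it drops out of the defining property of symplectic projections, so no canonical-form reduction or K\"ahler-angle decomposition is needed.
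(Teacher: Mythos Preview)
Your proof is correct and proceeds along a genuinely different line from the paper's. The paper first establishes the identity (\ref{volombra}) in the special case where $V$ itself is complex, so that $P_V$ is the Euclidean orthogonal projection; the computation then runs through the formula $\mathrm{vol}_{2k}(A(B^{2n})) = \omega_{2k}\sqrt{\det(A^TA|_{(\ker A)^\perp})}$ for $A = P_V\Phi$ and uses the relation $\Phi^T J = J\Phi^{-1}$ several times. The general symplectic $V$ is then handled by introducing a second $\omega_0$-compatible complex structure $\tilde J$ for which $V$ is complex, together with a unitary isomorphism between $(\R^{2n},\tilde J)$ and $(\R^{2n},J)$.

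You instead dispose of $\Phi$ at the outset via the intertwining relation $P_V\Phi = \Phi P_W$, reducing to the case $\Phi=\mathrm{id}$ for an arbitrary symplectic subspace $W$. The remaining computation---identifying $(\ker P_W)^\perp = J_0 W$, reading off $|\det(P_W|_{J_0W})|$ from the single matrix identity $MA=-I$, and converting Euclidean to symplectic volume through the Pfaffian relation $w(W)=|\mathrm{Pf}(A)|$---is more self-contained: it avoids the auxiliary compatible structure and the case split on whether $V$ is complex. The paper's route, on the other hand, makes the role of the Wirtinger equality case more transparent throughout and derives (\ref{casecplx}) by a direct chain of equalities rather than as a consequence of the reduction step. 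Both arguments are short; yours is the more streamlined of the two.
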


The proof of the above theorem can be obtained by an easy modification of the proof of \cite[Theorem 1]{am13}, but for the reader's convenience we include a full proof at the end of this section. See also \cite{ddgp19} for another approach to the linear non-squeezing theorem: There, statement (i) above is proven by showing that $P_V \Phi(B^{2n})$ always contains a symplectic $2k$-ball of radius 1, by using the Williamson symplectic diagonalization and Schur complements.  

We can now proceed with the proof of Corollary \ref{cor3}.

\begin{proof}[Proof of Corollary \ref{cor3}]
We use the notation $\mathrm{Symp}(\R^{2n})$ for the space of (nonlinear) symplectomorphisms $\varphi:(\R^{2n},\omega_0)\to(\R^{2n},\omega_0)$ and consider the function
\[
f: \mathrm{Symp}(\R^{2n} ) \times \mathrm{Gr}_{2k}(\R^{2n},\omega_0) \to (0,+\infty),\qquad f(\varphi,V):=\mathrm{vol}(P_V (\varphi(B^{2n})), \omega_0^k|_V).
\]
This function is continuous with respect to the $C^0_{\mathrm{loc}}$-topology on $\mathrm{Symp}(\R^{2n})$ and the standard topology of $\mathrm{Gr}_{2k}(\R^{2n},\omega_0)$.

We fix a linear symplectomorphism $\Phi: \R^{2n} \rightarrow \R^{2n}$. In order to prove Corollary \ref{cor3}, it suffices to find a $C^3_{\mathrm{loc}}$-neighborhood $\mathscr W_0$ of $\Phi$ so that
\begin{equation}\label{e:cor3}
f(\varphi,V)\geq \pi^k,\qquad \forall\,(\varphi,V)\in \mathscr{W}_0 \times \mathrm{Gr}_{2k}(\R^{2n},\omega_0).
\end{equation}
Denote by 
\[
\mathscr{G}_0 := \Phi \bigl( \mathrm{Gr}_k(\C^n) \bigr)
\]
the set of $V\in  \mathrm{Gr}_{2k}(\R^{2n},\omega_0)$ such that $\Phi^{-1}(V)$ is a complex linear subspace. By the compactness of the complex Grassmannian $ \mathrm{Gr}_{k}(\C^n)$, $\mathscr{G}_0$ is a compact subset of $\mathrm{Gr}_{2k}(\R^{2n},\omega_0)$.

If $V$ belongs to $\mathscr{G}_0$, then we have the identity
\[
P_V \Phi (B^{2n}) = \Phi( B^{2n} \cap \Phi^{-1}(V))
\]
by statement (i) of Theorem \ref{linnonsque}, so composing $\Phi$ with a unitary map $U_V : \R^{2k} \rightarrow \Phi^{-1} (V)$ we obtain a linear symplectomorphism 
$\Psi_V : \R^{2k} \rightarrow V$ such that
\[
\Psi_V(B^{2k})=P_V \Phi (B^{2n}).
\]
Let now $\mathscr{B}$ be the $C^3$-neighborhood of $B^{2k}$ in the space of smooth convex bodies in $\R^{2k}$ given by Corollary \ref{main2} from the Introduction. The set $\mathscr B$ has the property that
\begin{equation}
\label{quaeB}
c_{\mathrm{EHZ}}(C)^k \leq \mathrm{vol}(C,\omega_0^k) \qquad \forall C\in \mathscr{B}.
\end{equation}
For all $V_0 \in\mathscr G_0$ there holds 
\[
\Psi_{V_0}^{-1} P_{V_0} \Phi (B^{2n}) =B^{2k},
\]
and hence there exists an open neighborhood $\mathscr V_{V_0}$ of $V_0$ in $\mathrm{Gr}_{2k}(\R^{2n},\omega_0)$ and a $C^3_\mathrm{loc}$-neighborhood $\mathscr W_{V_0}$ of $\Phi$ in $\mathrm{Symp}(\R^{2n})$ such that
\begin{equation}
\label{zeroconcl}
\Psi_{V_0}^{-1}  P_V (\varphi(B^{2n}))  \in \mathscr{B} \qquad \forall\, (\varphi,V) \in \mathscr{W}_{V_0} \times \mathscr{V}_{V_0}.
\end{equation}
By the compactness of $\mathscr G_0$ there are finitely many $V_1,\ldots,V_N$ in $\mathscr G_0$ such that the set 
\[
\mathscr{V}:= \bigcup_{i=1}^N \mathscr{V}_{V_i}
\]
is an open neighborhood of $\mathscr{G}_0$. If $\mathscr{W}_0$ is any $C^3_{\mathrm{loc}}$-neighborhood of $\Phi$ in $\mathrm{Symp}(\R^{2n})$ which is contained in $\bigcap_{i=1}^N \mathscr{W}_{V_i}$, then by (\ref{zeroconcl}) we obtain the following statement: For every $(\varphi,V)$ in $\mathscr{W}_0\times \mathscr{V}$ there exists a linear symplectomorphism $\Psi: \R^{2k} \rightarrow V$ such that
\begin{equation}
\label{primaconcl}
\Psi^{-1} P_V (\varphi(B^{2n}))  \in \mathscr{B}. 
\end{equation}
If $(\varphi,V)$ belongs to $\mathscr{W}_0\times \mathscr{V}$ then we find, thanks to (\ref{quaeB}), (\ref{primaconcl}) and the fact that both the volume and the EHZ-capacity are invariant by the symplectomorphism $\Psi$, the inequality
\[
f(\varphi,V)=\mathrm{vol}(P_V(\varphi(B^{2n})),\omega_0^k|_V) \geq c_{\mathrm{EHZ}}(P_V(\varphi(B^{2n})) )^k.
\]
Now we can use the fact that the EHZ-capacity of the linear symplectic projection of a bounded convex domain $C$ is not smaller than the EHZ-capacity of $C$, see e.g.\ \cite[Theorem 4.1 (v)]{ama15}, and we obtain 
\[
c_{\mathrm{EHZ}}(P_V(\varphi(B^{2n})) )^k  \geq c_{\mathrm{EHZ}}(\varphi(B^{2n}))^k = c_{\mathrm{EHZ}}(B^{2n})^k = \pi^k.
\]
Putting the last two inequalities together, we have shown the inequality in \eqref{e:cor3} on $\mathscr W_0 \times \mathscr V$:
\begin{equation}\label{e:cor31}
f(\varphi,V)\geq \pi^k,\qquad \forall\,(\varphi,V)\in  \mathscr W_0 \times \mathscr V .
\end{equation}
Let us now consider the set
\[
\widehat{\mathscr V}:=\{V\in \mathrm{Gr}_{2k}(\R^{2n},\omega_0)\ |\ f(\Phi,V)\leq (2\pi)^k \},
\]
which is compact thanks to statement (ii) in Proposition \ref{linnonsque}. Let us shrink $\mathscr W_0$ so that the implication
\begin{equation*}
\varphi\in\mathscr W_0\quad\Longrightarrow\quad \varphi(B^{2n})\supset \frac{1}{2}\Phi(B^{2n})
\end{equation*}
holds. If $(\varphi,V)\in \mathscr W_0 \times \widehat{\mathscr V}^c$, then this implication yields
\[
\begin{split}
f(\varphi,V) &=\mathrm{vol}(P_V(\varphi(B^{2n})),\omega_0^k|_V) \geq \mathrm{vol}(P_V(\tfrac{1}{2}\Phi(B^{2n})),\omega_0^k|_V) =\mathrm{vol}(\tfrac{1}{2}P_V(\Phi(B^{2n})),\omega_0^k|_V)\\ &=\frac{1}{2^k}\mathrm{vol}(P_V(\Phi(B^{2n})),\omega_0^k|_V)= \frac{1}{2^k} f(\Phi,V) >\pi^k.
\end{split}
\]
Thus, we have shown the inequality in \eqref{e:cor3} on $\mathscr W_0\times\widehat{\mathscr V}^c$:
\begin{equation}\label{e:cor32}
f(\varphi,V)> \pi^k,\qquad \forall\,(\varphi,V)\in  \mathscr W_0 \times \widehat{\mathscr V}^c .
\end{equation}
Since $\widehat {\mathscr V}\setminus\mathscr V$ is compact and $f>\pi^k$ on $\{\Phi\} \times ( \widehat {\mathscr V}\setminus\mathscr V)$, up to shrinking the neighborhood $\mathscr{W}_0$ of $\Phi$ we may assume that
\begin{equation}\label{e:cor33}
f(\varphi,V)> \pi^k,\qquad \forall\,(\varphi,V)\in  \mathscr W_0 \times (\widehat{\mathscr V}\setminus\mathscr V) .
\end{equation}
Inequalities \eqref{e:cor31}, \eqref{e:cor32} and \eqref{e:cor33} yield the desired lower bound \eqref{e:cor3}. This concludes  the proof of Corollary \ref{cor3} from the Introduction.
\end{proof}

We end this section by proving Theorem \ref{linnonsque}.

\begin{proof}[Proof of Theorem \ref{linnonsque}]
We first consider the special instance in which $V$ is a complex subspace. In this case, the symplectic projector $P_V$ is orthogonal, and hence symmetric. We denote by $A$ the surjective linear map
\[
A:= P_V \Phi : \R^{2n} \rightarrow V.
\]
Then 
\begin{equation}
\label{Aperp}
P_V \Phi(B^{2n})  = A(B^{2n}) = A(B^{2n}\cap (\ker A)^{\perp}),
\end{equation}
where $\perp$ denotes the Euclidean orthogonal complement, and the Euclidean volume $\mathrm{vol}_{2k}$ of  this set can be expressed by the formula
\begin{equation}
\label{volsur}
\mathrm{vol}_{2k}(A(B^{2n})) = \omega_{2k} \sqrt{ \det(A^T A|_{(\ker A)^{\perp}})},
\end{equation}
where $A^T: V \rightarrow \R^{2n}$ denotes the transpose of $A$ with respect to the Euclidean product and $\omega_{2k} = \pi^k/k!$ is the volume of the unit $2k$-ball.

Note that, denoting by $J$ the standard complex structure of $\R^{2n}$ and using the fact that $V$ is complex and $P_V$ is symmetric, we have
\begin{equation}
\label{nucleo}
(\ker A)^{\perp} = A^T (V)= \Phi^T P_V (V) = \Phi^T (V)= \Phi^T J (V) = J \Phi^{-1} (V),
\end{equation}
where the last equality follows from the fact that the automorphism $\Phi$ is symplectic. Let $v_1,\dots,v_{2k}$ be a basis of $(\ker A)^{\perp}$ with
\[
|v_1 \wedge \dots \wedge v_{2k} |=1.
\]
By (\ref{volsur}) we have the chain of identities
\begin{equation}
\label{chainide}
\begin{split}
\left( \frac{\mathrm{vol}_{2k}(A(B^{2n}))}{\omega_{2k}} \right)^2 &= \det (A^T A|_{(\ker A)^{\perp}}) = | A^T A v_1 \wedge \dots \wedge A^T A v_{2k} | \\ &= \frac{1}{k! \, w((\ker A)^{\perp})} |\omega^k_0[ A^TA v_1,\dots,A^T A v_{2k}]|,
\end{split}
\end{equation}
From (\ref{nucleo}) and from the fact that $J$ is unitary we obtain
\begin{equation}
\label{e4w}
w((\ker A)^{\perp}) = w( J \Phi^{-1} (V)) = w( \Phi^{-1} (V)).
\end{equation}
Moreover, since $P_V$ is symmetric, there holds
\[
A^T A  = \Phi^T P_V \Phi = \Phi^T A,
\]
and hence, using the fact that $\Phi^T$ is symplectic and $V=A(\R^{2n})$ is complex, we obtain
\begin{equation}
\label{svil}
\begin{split}
 |\omega_0^k &[ A^TA v_1,\dots,A^T A v_{2k}]| =  |\omega^k_0[ \Phi^TA v_1,\dots,\Phi^T A v_{2k}]| =  |\omega^k_0[ A v_1,\dots, A v_{2k}]| \\ &= k! \, |A v_1 \wedge \dots \wedge A v_{2k} | = \frac{k!}{\omega_{2k}} \mathrm{vol}_{2k}(A(B^{2n}\cap (\ker A)^{\perp})) = \frac{k!}{\omega_{2k}} \mathrm{vol}_{2k}(A(B^{2n})), 
 \end{split}
\end{equation}
where in the last identity we have used (\ref{Aperp}). The identities (\ref{chainide}), (\ref{e4w}) and (\ref{svil}) give us the following formula for the Euclidean volume of $A(B^{2n})$:
\[
\mathrm{vol}_{2k}(A(B^{2n})) = \frac{\omega_{2k}}{w(\Phi^{-1}(V))} = \frac{\pi^k}{k! \, w(\Phi^{-1}(V))}.
\]
As $V$ is complex, we deduce the desired identity for the $\omega^k_0$-volume of $P_V \Phi(B^{2n}) = A(B^{2n})$:
\[
\mathrm{vol}(P_V \Phi(B^{2n}),\omega_0^k|_V) = k! \, \mathrm{vol}_{2k}(A(B^{2n})) = \frac{\pi^k}{w(\Phi^{-1}(V))}.
\]
The case of a general symplectic subspace $V\in \mathrm{Gr}_{2k}(\R^{2n},\omega_0)$ can be deduced from the above case as follows. Choose an $\omega_0$-compatible scalar product on $\R^{2n}$ such that the projector $P_V$ is orthogonal, and denote by $\tilde{B}^{2n}$ and $\tilde{J}$ the corresponding unit ball and $\omega_0$-compatible complex structure, which satisfies $\tilde{J}(V)=V$. Let $\Psi: (\R^{2n},\omega_0,\tilde{J}) \rightarrow (\R^{2n},\omega_0,J)$ be a symplectic and complex linear isomorphism. Then $\Psi$ is unitary from $(\R^{2n},\tilde{J})$ to $(\R^{2n},J)$, and hence $\Psi(\tilde{B}^{2n})=B^{2n}$. By applying (\ref{volombra}) to the complex subspace $V$ of $(\R^{2n},\tilde{J})$ and to the linear symplectomorphism $\Phi\Psi$ we obtain
\[
\mathrm{vol}(P_V \Phi(B^{2n}),\omega^k_0|_V) = \mathrm{vol}(P_V \Phi\Psi(\tilde{B}^{2n}),\omega^k_0|_V) = \frac{\pi^k}{\tilde{w} ( \Psi^{-1} \Phi^{-1} (V))} =  \frac{\pi^k}{w (\Phi^{-1} (V))},
\]
where $\tilde{w}$ denotes the function (\ref{wirtfun}) on the symplectic and complex vector space $(\R^{2n},\omega_0,\tilde{J})$ and in the last equality we have used again the fact that $\Psi$ is unitary. This proves the identity (\ref{volombra}) in general. 

The first part of statement (i) and statement (ii) are now immediate consequences of this identity and (\ref{wirtinger}). There remains to show that if $\Phi^{-1}(V)$ is a complex linear subspace, then identity (\ref{casecplx}) holds. This identity can be deduced from (\ref{Aperp}) by the following chain of equalities:
\[
\begin{split}
P_V \Phi(B^{2n}) &= \Phi ( B^{2n} \cap ( \ker P_V \Phi)^{\perp} ) = \Phi (B^{2n} \cap (\Phi^{-1} (\ker P_V))^{\perp}) = \Phi( B^{2n} \cap \Phi^T ((\ker P_V)^{\perp}) ) \\ &=  \Phi( B^{2n} \cap  \Phi^T J (V))=  \Phi( B^{2n} \cap J \Phi^{-1} (V)) = \Phi( B^{2n} \cap \Phi^{-1} (V)).
\end{split}
\]
Here, the fact that the subspace $\Phi^{-1}(V)$ is complex has been used in the last equality.
\end{proof}

\appendix

\section{Appendix: Estimates for differential forms}\label{appbound}
In this appendix we exhibit the proofs of Lemma \ref{boundsonforms} and \ref{l:estreeb}.
\subsection{Proof of Lemma \ref{boundsonforms}}
Let $B$, $B'$ and $B''$  be open balls in $\R^d$ such that 
\[
\overline{B''} \subset B' \subset \overline{B'} \subset B
\] 
and let
	\[
	\varphi_i : B \rightarrow U_i \subset M,  \qquad i=1,\dots,N, 
	\]
	be diffeomorphisms such that $\varphi_i$ and $\varphi_i^{-1}$ have bounded derivatives of every order and the open sets $U_i'':= \varphi_i(B'')$ cover $M$. Set $U'_i:= \varphi_i(B')$. Since $U_i'$ and $U_i''$ have compact closure in $U_i$ and $U_i'$, respectively,  
	we can find a positive number $r$ such that any map $u: M \rightarrow M$ with $\mathrm{dist}_{C^0}(u,\mathrm{id})<r$ satisfies
	\[
	u(U_i') \subset U_i, \qquad u^{-1}(U_i'') \subset U_i'.
	\] 
	Fix a smooth partition of unity $\{\rho_i\}_{i=1,\dots,N}$ subordinated to the open cover $\{U_i''\}_{i=1,\dots,N}$. Let $\alpha\in \Omega^j(M)$ and let $u: M \rightarrow M$ be a smooth map with $\mathrm{dist}_{C^0}(u,\mathrm{id})<r$. Then the $i$-th summand in
	\[
	u^*\alpha = \sum_{i=1}^N u^*(\rho_i \alpha)
	\]
	satisfies
	\[
	\mathrm{supp} \, u^* (\rho_i \alpha) \subset u^{-1} ( \mathrm{supp}\, \rho_i \alpha) \subset u^{-1}(U''_i) \subset U_i'.
	\]
	By means of the coordinate system $\varphi_i$, $\rho_i \alpha$ can be seen as a smooth $j$-form $\beta_i$ on $\R^d$ supported in $B''$ and the restriction of $u$ to $U_i'$ as a smooth map $v_i: B' \rightarrow B$ with bounded derivatives of every order such that $v_i^* \beta_i$ is compactly supported in $B'$.
	
	This localization argument allows us to reduce the proof of Lemma \ref{boundsonforms} to the following statement: For every $\beta\in \Omega^j(\R^d)$ with compact support and every smooth map $v: B'\rightarrow \R^d$ with bounded derivatives of every order we have
	\begin{eqnarray}
	\label{bdof1loc}
	\|v^* \beta\|_{C^k(B')} & \lesssim &  \|\beta\|_{C^k} \|\di v\|_{C^k(B')}^j ( 1 + \|\di v\|_{C^{k-1}(B')}^{k}), \\
	\label{bdof2loc}
	\|v^* \beta - \beta\|_{C^k(B')} & \lesssim & \|\beta\|_{C^{k+1}} \|v-\mathrm{id}\|_{C^{k+1}(B')} ( 1 + \|\di v\|_{C^k(B')}^{k+j}),
	\end{eqnarray}
	for every $k\geq 0$, where for $k=0$ the undefined term $\|\di v\|_{C^{k-1}(B')}$ in (\ref{bdof1loc}) is set to be zero. Indeed, there holds
	\[
	\begin{split}
	\|u^* \alpha\|_{C^k} &\leq \sum_{i=1}^N \|u^* (\rho_i \alpha)\|_{C^k} =  \sum_{i=1}^N \|u^* (\rho_i \alpha)\|_{C^k(U_i')}  \lesssim \sum_{i=1}^N \|v_i^* \beta_i\|_{C^k(B')} \\ &\!\!\!\stackrel{\eqref{bdof1loc}}{\lesssim} \sum_{i=1}^N \|\beta_i\|_{C^k} \|\di v_i\|_{C^k(B')}^j ( 1 + \|\di v_i\|_{C^{k-1}(B')}^{k}) \\ &\lesssim \sum_{i=1}^N \|\rho_i \alpha\|_{C^k} \|\di u\|_{C^k(U_i')}^j ( 1 + \|\di u\|_{C^{k-1}(U_i')}^{k}) \\ &\leq  \|\di u\|_{C^k(M)}^j ( 1 + \|\di u\|_{C^{k-1}(M)}^{k}) \sum_{i=1}^N \|\rho_i \alpha\|_{C^k},
	\end{split}
	\]
	and inequality (\ref{bdof1}) in the statement of Lemma \ref{boundsonforms} follows from the fact that the quantity $\sum_{i=1}^N \|\rho_i \alpha\|_{C^k}$ is a norm on $\Omega^j(M)$ that is equivalent to $\|\alpha\|_{C^k}$. Similarly, we get
	\[
	\begin{split}
	\|u^* \alpha - \alpha\|_{C^k} &\leq \sum_{i=1}^N \|u^* (\rho_i \alpha) - \rho_i \alpha\|_{C^k(U_i')} \lesssim \sum_{i=1}^N \|v_i^* \beta_i - \beta_i\|_{C^k(B')} \\ &\!\!\!\stackrel{\eqref{bdof2loc}}{\lesssim} \sum_{i=1}^N \|\beta_i\|_{C^{k+1}} \|v_i-\mathrm{id}\|_{C^{k+1}(B')}  ( 1 + \|\di v_i\|_{C^{k}(B')}^{j+k}) \\ &\lesssim \sum_{i=1}^N \|\rho_i \alpha\|_{C^{k+1}} \mathrm{dist}_{C^{k+1}(U_i')} (u,\mathrm{id})  ( 1 + \|\di u\|_{C^{k}(U_i')}^{j+k})\\ & \leq  \mathrm{dist}_{C^{k+1}(M)} (u,\mathrm{id}) ( 1 + \|\di u\|_{C^{k}(M)}^{j+k}) \sum_{i=1}^N \|\rho_i \alpha\|_{C^{k+1}} \\ &\lesssim \mathrm{dist}_{C^{k+1}(M)} (u,\mathrm{id}) ( 1 + \|\di u\|_{C^{k}(M)}^{j+k})  \|\alpha\|_{C^{k+1}} ,
	\end{split}
	\]
	proving inequality (\ref{bdof2}) in the statement of Lemma \ref{boundsonforms}.
	
	There remains to prove (\ref{bdof1loc}) and (\ref{bdof2loc}). We first deal with (\ref{bdof1loc}) in the case $j=0$, i.e.\ $\beta: \R^d \rightarrow \R$ is a compactly supported smooth real function, and argue inductively on $k$. In this case, (\ref{bdof1loc}) holds trivially for $k=0$, and we assume that it holds for a certain integer $k\geq 0$. We denote the standard basis of $\R^d$ by $\{e_j\}_{j=1,\dots,d}$ and multi-indices and partial derivatives by
	\[
	p = \sum_{j=1}^d p_j e_j, \qquad |p|:= \sum_{j=1}^d p_j, \qquad \partial^p = \partial_{x_1}^{p_1} \cdots \partial_{x_d}^{p_d},
	\]
	where the $p_j$'s are non-negative integers. If $|p|=k+1$, then we can write $p=q+e_j$ with $|q|=k$ and find
	\[
	\partial^p (v^* \beta) = \partial^q \partial_{x_j} (\beta\circ v) = \partial^q \sum_{i=1}^d ((\partial_{x_i} \beta)\circ v) \, \partial_{x_j} v_i = \sum_{i=1}^d \sum_{r+s=q} \binom{q}{r} \partial^r ((\partial_{x_i} \beta)\circ v)\, \partial^s \partial_{x_j} v_i,
	\]
	where the generalized binomial coefficient $\binom{q}{r}$ is the product of the binomial coefficients $\binom{q_i}{r_i}$ and 
	$v_i$ denotes the $i$-th component of $v$. From this identity and from the inductive assumption applied to the functions $\partial_{x_i} \beta$ we obtain
	\[
	\begin{split}
	\|\partial^p (v^* \beta)\|_{C^0(B')} &\leq \sum_{i=1}^d  2^k \|\partial_{x_i} \beta \circ v\|_{C^k(B')} \|\di v\|_{C^k(B')} \\ &\lesssim \sum_{i=1}^d \|\partial_{x_i} \beta\|_{C^k} ( 1 + \|\di v\|_{C^{k-1}(B')}^k ) \|\di v\|_{C^k(B')} \\ &\leq \|\beta\|_{C^{k+1}} ( \|\di v\|_{C^k(B')}  + \|\di v\|_{C^k(B')}^{k+1}).
	\end{split}
	\]
	Using again the inductive assumption, we deduce that the $C^{k+1}$-norm of $v^* \beta$ has the upper bound 
	\[
	\begin{split}
	\|v^* \beta\|_{C^{k+1}(B')} &= \|v^* \beta\|_{C^k(B')} + \sum_{|p|=k+1} \|\partial^p (v^* \beta)\|_{C^0(B')} \\ &\lesssim \|\beta\|_{C^k} ( 1 + \|\di v\|_{C^{k-1}(B')}^k ) +   \|\beta\|_{C^{k+1}} ( \|\di v\|_{C^k(B')}  + \|\di v\|_{C^k(B')}^{k+1}) \\ &\leq \|\beta\|_{C^{k+1}} ( 1 + \|\di v\|_{C^k(B')}^k + \|\di v\|_{C^k(B')} + \|\di v\|_{C^k(B')}^{k+1}) \\ &\lesssim   \|\beta\|_{C^{k+1}(B')} ( 1 + \|\di v\|_{C^k(B')}^{k+1}).
	\end{split}
	\]
	This concludes the proof of (\ref{bdof1loc}) for $j=0$. 
	
	The bound (\ref{bdof1loc}) for higher order forms follows from the case of functions by writing each smooth $j$-form as sum of the elementary $j$-forms  
	\[
	\beta = f \, \di x_{i_1} \wedge \cdots \wedge \di x_{i_j}
	\]
	and by using the identity
	\[
	v^* \beta = f\circ v \sum_{h\in \{1,\dots,d\}^j} (\partial_{x_{h_1}} v_{i_1}\cdot \ldots\cdot \partial_{x_{h_j}} v_{i_j}) \, \di x_{h_1} \wedge \cdots \wedge \di x_{h_j}.
	\]
	Now we prove the bound (\ref{bdof2loc}), starting again from the case of a function $\beta\in \Omega^0(\R^d)$. We have
	\begin{equation}
	\label{conve}
	\beta(v(x)) - \beta(x) = \sum_{i=1}^d (v_i(x)-x_i) \int_0^1 \partial_{x_i} \beta \big( t v(x) + (1-t) x\big)\, \di t.
	\end{equation}
	From (\ref{bdof1loc}) for $j=0$ we deduce
	\[
	\begin{split}
	\|\partial_{x_i} \beta( t v + (1-t)\mathrm{id}) \|_{C^k(B')} &\lesssim \|\partial_{x_i} \beta\|_{C^k} ( 1 + \|t \di v + (1-t) \mathrm{id} \|_{C^{k-1}(B')}^k) \\ &\lesssim \|\beta\|_{C^{k+1}} ( 1 + \|\di v\|_{C^{k-1}(B')}^k).
	\end{split}
	\]
	We can now estimate the $C^k$-norm of (\ref{conve}) using the above bound and the fact that the $C^k$ norm of a product is bounded by the product of the $C^k$ norms:
	\begin{equation}
	\label{meglio}
	\|\beta\circ v - \beta\|_{C^k(B')} \lesssim \|\beta\|_{C^{k+1}} \|v-\mathrm{id}\|_{C^k(B')} ( 1 + \|\di v\|_{C^{k-1}(B')}^k),
	\end{equation}
	which is a stronger version of (\ref{bdof2loc}) for $j=0$. If $j\geq 1$ and $\beta$ is the elementary $j$-form 
	\[
	\beta = f \, \di x_{i_1} \wedge \cdots \wedge \di x_{i_j},
	\]
	we have
	\begin{equation}
	\label{llaide}
	v^* \beta - \beta = (f\circ v - f) \di v_{i_1} \wedge \cdots \wedge \di v_{i_j} + f ( \di v_{i_1} \wedge \cdots \wedge \di v_{i_j} - \di x_{i_1} \wedge \cdots \wedge \di x_{i_j}) .
	\end{equation}
	By writing $v_i(x) = x_i + w_i(x)$ we can expand the term $\di v_{i_1} \wedge \cdots \wedge \di v_{i_j}$ and get the bound
	\begin{equation}
	\label{llaine}
	\|\di v_{i_1} \wedge \cdots \wedge \di v_{i_j} - \di x_{i_1} \wedge \cdots \wedge \di x_{i_j}\|_{C^k(B')} \lesssim \|\di v-\mathrm{id}\|_{C^k(B')} ( 1 + \|\di v - \mathrm{id}\|_{C^k(B')}^{j-1}).
	\end{equation}
	The identity (\ref{llaide}) and the estimate (\ref{llaine}), together with the bound (\ref{meglio}) applied to the function $f$, imply
	\[
	\begin{split}
	\|v^* \beta - \beta\|_{C^k(B')} &\leq \|f\circ v - f\|_{C^k(B')} \|\di v\|_{C^k(B')}^j \\ & \quad + \|f\|_{C^k} \| \di v_{i_1} \wedge \cdots \wedge \di v_{i_j} - \di x_{i_1} \wedge \cdots \wedge \di x_{i_j}\|_{C^k(B')} \\ 
	&\lesssim \|f\|_{C^{k+1}} \|v-\mathrm{id}\|_{C^k(B')} ( 1 + \|\di v\|_{C^{k-1}(B')}^k) \|\di v\|_{C^k(B')}^j \\ &\quad + \|f\|_{C^k} \|\di v-\mathrm{id}\|_{C^k(B')} ( 1 + \|\di v - \mathrm{id}\|_{C^k(B')}^{j-1}) \\ &
	\lesssim \|f\|_{C^{k+1}} \|v-\mathrm{id}\|_{C^{k+1}(B')} ( 1 + \|\di v\|_{C^k(B')}^{k+j}).
	\end{split}
	\]
	By adding up over all elementary forms we obtain (\ref{bdof2loc}).
\subsection{Proof of Lemma \ref{l:estreeb}}

Let  $J:\Lambda^{2n-2}M\to TM$ be the vector bundle isomorphism that is the inverse of the map
\[
X\mapsto\iota_X(\alpha_0\wedge \di\alpha_0^{n-1}),
\]
and consider the bundle map
\[
K : \Lambda^2 M \to TM, \qquad K(\beta) = J(\beta^{n-1}).
\]
If $\alpha$ is a contact form on $M$, then $\alpha \wedge \di\alpha^{n-1} = f\, \alpha_0 \wedge \di\alpha_0^{n-1}$ for some non-vanishing function $f\in \Omega^0(M)$, and we have
\[
\imath_{K(\di\alpha)} \bigl( \alpha \wedge \di\alpha^{n-1} \bigr) = f \,\imath_{K(\di\alpha)} \bigl( \alpha_0 \wedge \di\alpha_0^{n-1} \bigr) = f\, \di\alpha^{n-1}.
\] 
From the identity 
\[
\imath_{R_{\alpha}} \bigl( \alpha \wedge \di\alpha^{n-1} \bigr) = \di\alpha^{n-1}
\]
we then obtain that the non-vanishing vector field $K(\di\alpha)$ is parallel to $R_{\alpha}$ and hence we find the following formula for the Reeb vector field of $\alpha$:
\begin{equation}
\label{formR}
R_{\alpha} = \frac{K(\di\alpha)}{\alpha(K(\di\alpha))}.
\end{equation}
Since $K(\di \alpha_0) = R_{\alpha_0}$, for every integer $k\geq0$ the map $K$ satisfies
\begin{equation}
\label{aaa0}
\|K(\di \alpha) - R_{\alpha_0}\|_{C^k} = \|K(\di \alpha) - K(\di \alpha_0)\|_{C^k} \leq \omega_k( \|\di \alpha - \di\alpha_0\|_{C^k}) \qquad \forall \alpha\in \Omega^1(M), 
\end{equation}
for a suitable modulus of continuity $\omega_k$. We deduce the estimates
\[
\begin{split}
\|\alpha(K(\di\alpha)) - 1 \|_{C^k} &\leq \|\alpha(K(\di \alpha)) - \alpha(R_{\alpha_0})\|_{C^k} + \|\alpha(R_{\alpha_0}) - \alpha_0(R_{\alpha_0}) \|_{C^k} \\ &\leq \|\alpha\|_{C^k} \|K(\di \alpha) - R_{\alpha_0}\|_{C^k} + \|\alpha-\alpha_0\|_{C^k} \|R_{\alpha_0}\|_{C^k} \\ & \leq  \|\alpha\|_{C^k} \omega_k( \|\di \alpha-\di \alpha_0\|_{C^k}) + \|\alpha - \alpha_0\|_{C^k}  \|R_{\alpha_0}\|_{C^k},
\end{split}
\]
from which we obtain the bounds
\begin{equation}
\label{aaa1}
\|\alpha(K(\di\alpha)) - 1 \|_{C^k} \leq \omega_k' (\|\alpha-\alpha_0\|_{C^{k+1}}) \qquad \forall k\geq 0,
\end{equation}
for a suitable sequence of moduli of continuity $\omega_k'$. 

Let $\delta>0$ be such that $\omega_0'(\delta)\leq 1/2$. If the contact form $\alpha$ on $M$ satisfies $\|\alpha-\alpha_0\|_{C^1}<\delta$ then (\ref{aaa1}) implies that $\alpha(K(\di \alpha))$ is uniformly bounded away from zero and we have bounds 
\begin{equation}
\label{aaa2}
\Bigl\|\frac{1}{\alpha(K(\di\alpha))} - 1 \Bigr\|_{C^k} \leq \omega_k''(\|\alpha-\alpha_0\|_{C^{k+1}}) \qquad \forall k\geq 0,
\end{equation}
for a suitable sequence of moduli of continuity $\omega_k''$. The desired estimate for the $C^k$-norm of $R_{\alpha} - R_{\alpha_0}$ now follows from (\ref{formR}), (\ref{aaa0}) and (\ref{aaa2}).

\section{Appendix: Bottkol's theorem}
\label{appbottkol}
\subsection{The statement of the theorem}

Let $M$ be a smooth closed manifold and $X_0$ a smooth vector field on $M$ all of whose orbits are periodic with the same minimal period $T_0$. The flow $\phi_{X_0}^t$ of $X_0$ induces a free $S^1$-action on $M$. 

We fix a Riemannian metric $g$ on $M$ such that the diffeomorphisms $\phi_{X_0}^t$ are isometries for all $t\in \R$. In order to construct a metric with this property, it is enough to start from any metric on $M$ and average it on the orbits of $\phi_{X_0}$. 

For every integer $k\geq 0$, we denote by $\mathfrak{X}^k(M)$ the vector space of $C^k$ vector fields on $M$ endowed with the $C^k$-norm induced by $g$. The symbol $\mathfrak{X}(M)$ denotes the space of smooth vector fields on $M$.

Let now $U\in \mathfrak{X}^0(M)$ be a continuous vector field. First, we can average $U$ on the orbits of $X_0$, producing the following $\phi_{X_0}$-invariant vector field:
\[
\overline{U}(x) := \frac{1}{T_0} \int_0^{T_0} \di\phi_{X_0}^{-t} [ U(\phi_{X_0}^t(x))]\, \di t.
\]

Second, we can define a continuous map
\[
u: M \rightarrow M, \qquad u = \exp\circ\, U,
\]
where 
\[
\exp: TM \rightarrow M
\]
denotes the exponential mapping associated to the metric $g$. The map $u$ is $C^k$ if $U$ is $C^k$ and is a $C^k$ diffeomorphism if $U$ is $C^k$ and $C^1$-small.

Third, for every $x\in M$ we denote by
\[
P(U)_x : T_x M \rightarrow T_{u(x)} M
\]
the linear map that is induced by the Jacobi fields along the geodesic $t\mapsto \exp(tU(x))$, $t\in [0,1]$, vanishing at $t=0$:
\[
P(U)_x v := \frac{\di}{\di s}\Bigr|_{s=0}   \exp_x(U(x)+sv) = \di^v \exp(U(x))[v],
\]
where $\di^v\exp$ denotes the vertical differential of the exponential map:
\[
\di^v \exp : TM \cong T^v TM \rightarrow TM.
\]
The map $P(U)_x$ is an isomorphism provided that $\|U\|_{C^0} < r_{\mathrm{inj}}$, where $r_{\mathrm{inj}}$ denotes the injectivity radius of $(M,g)$.
The aim of this appendix is to discuss the proof and some consequences of the following result.

\begin{thm}
	\label{bottkol}
	There exists $\delta>0$ such that for every $X\in \mathfrak{X}(M)$ with $\|X-X_0\|_{C^1} < \delta$ there is a pair of vector fields $U,V\in \mathfrak{X}(M)$ and a smooth function $h: M \rightarrow \R$ such that:
	\begin{enumerate}[(i)]
		\item $P(U) V = \di u [X_0] - h X \circ u$, where $u=\exp\circ\, U$;
		\item $\overline{U}=0$;
		\item $\mathcal{L}_{X_0} V =0$;
		\item $g(V,X_0)=0$;
		\item $\mathcal{L}_{X_0} h =0$.
	\end{enumerate}
	Moreover, for every integer $k\geq 1$ we have the bound
	\begin{equation}
	\label{boundsappe}
	\max\{ \|U\|_{C^k}, \|\mathcal{L}_{X_0} U\|_{C^k},  \|V\|_{C^k}, \|h-1\|_{C^k} \} \leq \omega_k(\|X-X_0\|_{C^k}),
	\end{equation}
	for some modulus of continuity $\omega_k$.
\end{thm}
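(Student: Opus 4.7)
The plan is to prove Theorem~\ref{bottkol} by an implicit function theorem argument, bifurcating from the trivial solution $(U,V,h) = (0,0,1)$ which holds at $X = X_0$. Define the functional
\[
\mathcal{F}(U, V, h;X)(x) := \di u[X_0](x) - P(U)_x V(x) - h(x)\,X(u(x)),
\]
with $u := \exp\circ\,U$, and view the output as a section of $u^*TM$ identified with a section of $TM$ by parallel transport along the geodesics $s \mapsto \exp_x(sU(x))$. Condition (i) becomes $\mathcal{F}(U,V,h;X) = 0$, while conditions (ii)--(v) cut out the constrained Banach spaces in which $U$, $V$, $h$ are to be sought; clearly $\mathcal{F}(0,0,1;X_0) \equiv 0$.

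First I would compute the Fréchet differential of $\mathcal{F}$ in $(U,V,h)$ at this base point. Using $P(0) = \mathrm{id}$ and a brief variational computation in local coordinates, one checks that the linearizations of $\di u[X_0]$ in $\dot U$ and of $hX\circ u$ in $(\dot U,\dot h)$ conspire, by cancellation of the coordinate derivatives of $X_0$, to produce the Lie bracket $[X_0,\dot U] = \mathcal{L}_{X_0}\dot U$, so that
\[
D\mathcal{F}(0,0,1;X_0)(\dot U,\dot V,\dot h) = \mathcal{L}_{X_0}\dot U - \dot V - \dot h\,X_0.
\]

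The crucial step is to verify that this linearization is a Banach isomorphism onto $\mathfrak{X}^k(M)$. For $W \in \mathfrak{X}^k(M)$, orthogonally split $W = wX_0 + W_\perp$ with respect to $g$; since $g$ is $X_0$-invariant, this splitting commutes with the averaging operator $W \mapsto \overline W$. Applying $\overline{\,\cdot\,}$ to the equation $\mathcal{L}_{X_0}\dot U - \dot V - \dot h X_0 = W$, using that $\overline{\mathcal{L}_{X_0}\dot U} = 0$ (a telescoping integral, since $\phi_{X_0}^{T_0} = \mathrm{id}$) and that $\dot V$ and $\dot h$ are already $X_0$-invariant, gives
\[
\dot h = -g(\overline W, X_0)/g(X_0,X_0), \qquad \dot V = -\overline{W_\perp}.
\]
The residual $\mathcal{L}_{X_0}\dot U = W - \overline W$ has zero mean along every $X_0$-orbit, so under the constraint $\overline{\dot U}=0$ it is uniquely solved by the zero-average primitive along orbits of $X_0$; both $\dot U$ and $\mathcal{L}_{X_0}\dot U$ then inherit a $C^k$-bound from $\|W\|_{C^k}$.

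A quantitative implicit function theorem then yields, for $\|X-X_0\|_{C^1}$ sufficiently small, a solution $(U, V, h)$ satisfying the estimates (\ref{boundsappe}) at each $C^k$-level, with smoothness obtained by a standard uniqueness-plus-bootstrap argument. The main technical obstacle is a mild loss of derivatives: $\mathcal{F}$ loses one derivative of $U$ through the term $\di u[X_0]$, whereas the inverse of the linearization gains one derivative only in the $X_0$-direction (via the primitive along orbits) and none transversely. The resolution is to work in an anisotropic Banach space that enforces simultaneous control of $\|U\|_{C^k}$ and $\|\mathcal{L}_{X_0}U\|_{C^k}$ rather than the full $C^{k+1}$ norm of $U$; this exactly matches the gain and loss, restores a genuine Banach isomorphism, and simultaneously explains why the statement (\ref{boundsappe}) lists $\|\mathcal{L}_{X_0}U\|_{C^k}$ as a separate quantity, following the approach underlying \cite[Proposition 3.4]{ker99} that the authors say they follow.
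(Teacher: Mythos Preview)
Your plan is correct and matches the paper's proof essentially step for step: the paper defines $\Phi_X(U,V,h)=P(U)^{-1}\di u[X_0]-hP(U)^{-1}X(u)-V$ (using $P(U)^{-1}$ rather than parallel transport to identify $T_{u(x)}M$ with $T_xM$, a cosmetic difference), computes the same linearization $\mathcal{L}_{X_0}\dot U-\dot h X_0-\dot V$, inverts it by the same averaging argument, and applies the inverse mapping theorem in exactly the anisotropic space you describe (their $\mathscr{U}$ carries the norm $\|U\|_{C^0}+\|\mathcal{L}_{X_0}U\|_{C^0}$), followed by a bootstrap for higher regularity. The only refinement worth noting is that the paper runs the implicit function argument once at the $C^0$-based level and then upgrades regularity by a separate difference-quotient bootstrap (their Lemma in the appendix), rather than applying IFT at each $C^k$ level.
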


Under the stronger assumption that $X$ is $C^2$-close to $X_0$, the existence of $C^1$ vector fields $U$, $V$ and of a $C^1$ function $h$ satisfying (i)-(v) was proven by Bottkol in \cite[Theorem 1 and Lemma A]{bot80}, building on ideas of Weinstein and Moser from \cite{wei73a,wei73b,mos76}. Actually, Bottkol's setting is more general: The flow of the vector field $X_0$ is $T_0$-periodic only on a submanifold of $M$ that satisfies a suitable non-degeneracy assumption. The fact that the $C^2$-closeness assumption can be replaced by $C^1$-closeness by adapting an argument from \cite{mos76} is explicitly observed in \cite{bot80}. 

Up to reducing the positive number $\delta$ in the above theorem, we can assume that $U$ is sufficiently $C^1$-small so that $u=\exp \circ\, U$ is a diffeomorphism. In this case, condition (i) can be rewritten as
\begin{equation}
\label{inew}
h \, u^* X = X_0 - \mathscr{Q}[ V],
\end{equation}
where
\[
\mathscr{Q} := \di u^{-1} \circ P(U)
\]
is a linear automorphism of $TM$ lifting the identity. Note that the bounds on $U$ from (\ref{boundsappe}) imply for all $k\geq 1$ that
\begin{equation}
\label{boundsappe2}
\max\Big\{ \mathrm{dist}_{C^{k}}(u,\mathrm{id}),\  \|\mathscr{Q}-\mathrm{id}\|_{C^{k-1}},\  \mathrm{dist}_{C^{k}}(\di u\circ \mathscr{Q},\mathrm{id}) \Big\} \leq \omega_k(\|X-X_0\|_{C^{k}}),
\end{equation}
for suitable moduli of continuity $\omega_k$. In this way, we have obtained the formulation of Bottkol's theorem that we stated as Theorem \ref{tbot} and used in the proof of the normal form.

\subsection{Application to the existence of short periodic orbits}

Theorem \ref{bottkol} can be used to prove the existence of closed orbits for vector fields $X$ that are $C^1$-close to the vector field $X_0$, all of whose orbits are closed and have the same minimal period $T_0$. Indeed, denote by 
\[
\pi: M \rightarrow B
\]
the projection onto the quotient induced by the free $S^1$-action given by the flow of $X_0$. Conditions (iii) and (iv) in Theorem \ref{bottkol} imply that there is a smooth vector field $\widehat{V}$ on $B$ such that $\di\pi[V] = \widehat{V}\circ \pi$ and $V(x)=0$ if and only if $\widehat{V}(\pi(x))=0$. Let $b\in B$ be a zero of $\widehat{V}$. Then $V$ vanishes on the circle $\pi^{-1}(b)$, and (\ref{inew}) implies that $u^*X$ is parallel to $X_0$ along this circle. Therefore, $\pi^{-1}(b)$ is a closed orbit of $u^*X$ of period $T=h(x) T_0$, where $x$ is any point on $\pi^{-1}(b)$ (by (v), $h$ is constant on $\pi^{-1}(b)$). We conclude that the original vector field $X$ has the periodic orbit $u(\pi^{-1}(b))$, which is close to $\pi^{-1}(b)$ and has period $T$ close to $T_0$.

Therefore, any zero of the vector field $\widehat{V}$ on $B$ corresponds to a closed orbit of $X$ that bifurcates from the manifold of closed orbits of $X_0$ and has period close to $T_0$. In particular, if the Euler characteristic of $B$ does not vanish, $X$ must have closed orbits of this kind.

On the other hand, it is well known that, under the above assumptions on $X_0$, the following fact holds true: For every $\epsilon>0$ there exists $\rho>0$ such that if $\|X-X_0\|_{C^1}<\rho$ then all non-iterated closed orbits of $X$ have period that is either contained in the interval $(T_0-\epsilon,T_0+\epsilon)$ or larger than $1/\epsilon$, see \cite[Corollary 1]{ban86}. It is then natural to ask whether the zeroes of the vector field $\widehat{V}$ actually detect all the closed orbits of $X$ with period in the interval $(T_0-\epsilon,T_0+\epsilon)$. The next result says that this is indeed true, provided that $X$ is $C^2$-close to $X_0$.

\begin{prop}
\label{Vvedetutto}
For every $\epsilon>0$ there exists $\rho>0$ such that if $\|X-X_0\|_{C^2}<\rho$ then the following facts hold:
\begin{enumerate}[(i)]
\item All the non-iterated closed orbits of $X$ have period that is either contained in the interval $(T_0-\epsilon,T_0+\epsilon)$, or is larger than $1/\epsilon$.
\item The closed orbits of $X$ with period in the interval $(T_0-\epsilon,T_0+\epsilon)$ are precisely the curves of the form $u(\pi^{-1}(b))$, where $b$ is a zero of the vector field $\widehat{V}$.
\end{enumerate}
\end{prop}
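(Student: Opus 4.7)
The easy parts I would dispose of first: statement (i) is exactly the content of Banyaga's \cite[Corollary 1]{ban86} applied to $X$ and $X_0$, so the plan is simply to invoke it. One inclusion of (ii) is already established in the discussion preceding the proposition: if $b \in B$ satisfies $\widehat{V}(b)=0$, then $V$ vanishes on $\pi^{-1}(b)$, hence the normal form $h\,u^*X = X_0 - \mathscr{Q}[V]$ forces $\pi^{-1}(b)$ to be a closed orbit of $u^*X$ with period $h|_{\pi^{-1}(b)}\,T_0$, which lies in $(T_0-\epsilon,T_0+\epsilon)$ provided $\|h-1\|_{C^0}<\epsilon/T_0$. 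The heart of the plan is then the converse of (ii): every closed orbit of $X$ of period in that interval has the form $u(\pi^{-1}(b))$ for some zero $b$ of $\widehat{V}$.

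For the converse, I would take such an orbit $\gamma$ and set $\tilde\gamma := u^{-1}\circ\gamma$, obtaining a closed orbit of the reparametrized vector field $Y := h\,u^*X = X_0 - \mathscr{Q}[V]$ of period $\tau = \int_0^T h(\tilde\gamma(t))\,\di t$, which stays close to $T_0$ since $\|h-1\|_{C^0}$ is small. The goal is to show $\tilde\gamma = \pi^{-1}(b)$ for some zero $b$ of $\widehat{V}$ via a Poincar\'e-section and averaging analysis. Since $\|X-X_0\|_{C^2}<\rho$ is small, the bounds (\ref{boundsappe}) and (\ref{boundsappe2}) of Theorem \ref{bottkol} yield $C^1$-smallness of $V$, $\mathscr{Q}-\mathrm{id}$ and $h-1$, so the flow of $Y$ is $C^1$-close to that of $X_0$ on $[0,2T_0]$; a Gronwall estimate then confines $\tilde\gamma$ to an $\omega_1(\rho)$-neighborhood of some fiber $\pi^{-1}(b^*)$ with $b^*:=\pi(\tilde\gamma(0))$. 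In adapted coordinates $(b,\theta)\in W\times\R/T_0\Z$ on a tubular neighborhood of this fiber with $X_0=\partial_\theta$, conditions (iii) and (iv) of Theorem \ref{bottkol} make $V$ $\theta$-independent and horizontal with $\di\pi[V]=\widehat{V}\circ\pi$. Writing $\mathscr{Q}=\mathrm{id}+E$ with $\|E\|_{C^1}\leq\omega_1(\rho)$ and $E[V]=E_h[V]+a\,X_0$ in horizontal/vertical components, the orbit equation $\tilde\gamma'=Y(\tilde\gamma)$ becomes
\[
\dot b = -V(b) - E_h(b,\theta)[V(b)], \qquad \dot\theta = 1 - a(b,\theta).
\]

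Next I would integrate the first equation over $[0,\tau]$ and exploit the closure condition $b(\tau)=b^*$ together with the approximation $(b(t),\theta(t))=(b^*,t+\theta(0))+O(\omega_1(\rho))$ available on that time interval, to derive by averaging
\[
0 = \tau\,\bigl(\mathrm{id}+\overline{E_h}(b^*)\bigr)[V(b^*)] + \text{lower-order error},
\]
where $\overline{E_h}$ is the $X_0$-average of $E_h$ and the error is $o(\|V(b^*)\|)+O(\omega_1(\rho)^2)$. Since $\mathrm{id}+\overline{E_h}(b^*)$ is invertible for small $\rho$, this reduces to the equation $\widehat{V}(b^*)=0$ modulo a perturbation of strictly smaller order. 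A parametric implicit function theorem, applied to the full closure system in $(b^*,\tau)$ as a perturbation of the unperturbed solution set $\{\widehat{V}=0\}\times\{T_0\}$, would then produce a unique closed orbit of $Y$ of period near $T_0$ in each small tubular neighborhood of a zero-fiber $\pi^{-1}(b)$ of $\widehat{V}$. Since $\pi^{-1}(b)$ is already such an orbit by the easy direction, uniqueness would force $\tilde\gamma=\pi^{-1}(b)$, completing the argument.

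The hard part will be the uniqueness step: the implicit function theorem requires genuine $C^1$-control on the linearization of the first-return map, which in turn needs $C^1$-smallness of both $V$ and $\mathscr{Q}-\mathrm{id}$; this is precisely what $C^2$-closeness of $X$ to $X_0$ provides via Theorem \ref{bottkol}, but what mere $C^1$-closeness does not. This accounts for the regularity gap compared to the bifurcation existence argument given right after Theorem \ref{bottkol}, which survives under $C^1$-closeness alone.
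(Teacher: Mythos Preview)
Your approach has a genuine gap in the uniqueness step. The implicit function theorem you invoke for the closure system requires the linearization of the return map --- equivalently, $\di\widehat V$ --- to be nondegenerate at the zeros of $\widehat V$, but the proposition makes no such hypothesis: the zero set of $\widehat V$ may be positive-dimensional, or degenerate in worse ways. Without nondegeneracy you cannot conclude that there is a \emph{unique} closed orbit of $Y$ near a given zero-fiber $\pi^{-1}(b)$, so the identification $\tilde\gamma=\pi^{-1}(b)$ does not follow. Your averaging estimate itself only yields $\widehat V(b^*)=O(\omega_1(\rho)^2)$, not $\widehat V(b^*)=0$; from this alone you cannot even place $b^*$ near the zero set of $\widehat V$ in any controlled way. (Note also that the ``unperturbed solution set'' is not $\{\widehat V=0\}\times\{T_0\}$: for $X_0$ every fiber is a periodic orbit, so the reference problem is totally degenerate.)

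The paper bypasses nondegeneracy entirely with a Poincar\'e-inequality argument. Working with $Y:=u^*X$ and writing the projected orbit $\beta(t):=\pi(\tilde\gamma(t))$ in local coordinates, one has $\beta'(t)=A(t)[V(\beta(t))]$ for a path of linear maps $A(t)$ close to the projection. Differentiating once more and substituting $V(\beta)=A(t)^{-1}\beta'$ (the inverse taken on the fiber $E_{\gamma(t)}$, where $V$ lives) gives
\[
\beta'' = \bigl(A'\circ A^{-1} + A\circ \di V(\beta)\bigr)[\beta'],
\]
hence $|\beta''|\le \omega_2(\rho)\,|\beta'|$; the smallness of $\|\di V\|_{C^0}$ here is exactly what $C^2$-closeness of $X$ to $X_0$ supplies. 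Since $\beta'$ has zero mean on $\R/T\Z$, the Poincar\'e inequality yields $\|\beta'\|_{L^2}\le \tfrac{T}{2\pi}\|\beta''\|_{L^2}\le \tfrac{1}{2\pi\epsilon}\|\beta''\|_{L^2}$, which forces $\beta'\equiv 0$ once $\omega_2(\rho)<2\pi\epsilon$. Thus $\tilde\gamma$ is \emph{exactly} a fiber $\pi^{-1}(b)$, $V$ vanishes on it, and the period equals $h(b)\in(T_0-\epsilon,T_0+\epsilon)$. Because the argument applies to any closed orbit of period $T\le 1/\epsilon$, it delivers (i) and (ii) simultaneously, with no separate appeal to \cite{ban86}.
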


\begin{proof}
Assume, without loss of generality, that $T_0=1$.
As recalled above, statement (i) can be derived from \cite[Corollary 1]{ban86}. However, we will deduce both (i) and (ii) simultaneously from Bottkol's Theorem \ref{bottkol}. Assume that 
\begin{equation}
\label{VV1}
\|X-X_0\|_{C^2} < \rho,
\end{equation}
for some positive number $\rho$ whose size will be specified along the proof. Using the notation introduced above, we have by (\ref{boundsappe}) and (\ref{boundsappe2})
\begin{equation}
\label{VV2}
\max\{\|V\|_{C^2}, \|h-1\|_{C^2}, \|\mathscr{Q} - \mathrm{id}\|_{C^1}\} \leq \omega_2(\rho).
\end{equation}
In particular, if $\rho$ is small enough we have
\begin{equation}
\label{VV3}
\|h-1\|_{C^0} < \epsilon.
\end{equation}
Set $Y:= u^*X$, so that (\ref{inew}) gives us
\begin{equation}
\label{VV4}
Y  =  \frac{1}{h} X_0 - \frac{1}{h} \mathscr{Q} [V],
\end{equation}
and together with (\ref{VV2}) we obtain the bound
\begin{equation}
\label{VV5}
\|Y-X_0\|_{C^1} \leq \omega(\rho),
\end{equation}
for a suitable modulus of continuity $\omega$.

Since the diffeomorphism $u$ conjugates the flows of $Y$ and $X$, it suffices to prove the following fact: For every closed orbit $\gamma: \R/T \Z \rightarrow M$ of $Y$ of period $T\leq 1/\epsilon$ there is a point $b\in B$ with $\widehat{V}(b)=0$ such that
\[
\gamma(\R/T \Z) = \pi^{-1}(b). 
\]
Indeed, if this is the case then $V$ vanishes along $\gamma$, and the identity (\ref{VV4}) implies that $T$ agrees with the (constant) value of $h$ on $\pi^{-1}(b)$. Thus, the bound $|T-1|< \epsilon$ follows from inequality (\ref{VV3}). 

Let us prove the fact stated above. The upper bound $1/\epsilon$ on $T$ guarantees that if $Y$ is $C^0$-close enough to $X_0$ then $\gamma(\R/T \Z)$ remains close to some fiber $\pi^{-1}(b)$ of the $S^1$-bundle $\pi : M \rightarrow B$. Therefore, by choosing $\rho$ small enough we may assume that $\gamma(\R/T \Z)$ is contained in a trivializing neighborhood $\pi^{-1}(B_0)$ for such a bundle. We can then identify $B_0\subset B$ with an open set of $\R^{d-1}$, where $d=\dim M$, and $\pi^{-1}(B_0)$ with the product
\[
B_0 \times \T \subset \R^{d-1} \times \T
\]
in such a way that $\pi$ is the projection onto the first factor and $X_0 = \partial_{\theta}$, where $\theta$ denotes the variable in $\T:= \R/\Z$. By this identification, the closed orbit $\gamma$ has components
\[
\gamma(t) = (\beta(t),\theta(t)) \in B_0 \times \T.
\]
By projecting the equation
\begin{equation}
\label{VV6}
\gamma' = Y(\gamma)
\end{equation}
onto $\R^{d-1}$ we obtain the following equation for $\beta: \R/T \Z \rightarrow B_0$
\begin{equation}
\label{VV7}
\beta'(t) = A(t) [ V(\beta(t)) ],
\end{equation}
where $A$ is the closed path of linear mappings
\[
A(t) :=  - \frac{1}{h(\beta(t))} \pi \circ \mathscr{Q}_{\gamma(t)} : \R^d \rightarrow \R^{d-1}.
\]
In (\ref{VV7}) we have used that $\mathcal L_{X_0}V=0$. From (\ref{VV2}), (\ref{VV5}) and (\ref{VV6}) we deduce that the path $A$ is $C^1$-close to the constant path $-\pi$, and hence
\begin{equation}
\label{VV8}
\max \{\|A+\pi\|_{C^0}, \|A'\|_{C^0} \} \leq \omega_1(\rho),
\end{equation}
for a suitable modulus of continuity $\omega_1$. We denote by $E$ the vector bundle over $B_0 \times \T$ whose fibers are the $(d-1)$-dimensional $g$-orthogonal complements of $\R \partial_{\theta}$. If $\rho$ is small enough, (\ref{VV8}) implies that $A(t)$ maps the fiber $E_{\gamma(t)}$ of $E$ at $\gamma(t)$ isomorphically onto $\R^{d-1}$ and, if we denote by
\[
A(t)^{-1} : \R^{d-1} \rightarrow E_{\gamma(t)} 
\]
the inverse of this restriction, we have
\begin{equation}
\label{VV9}
\|A^{-1}\|_{C^0} \leq c,
\end{equation}
for a suitable positive number $c$. By differentiating (\ref{VV7}) with respect to $t$ we obtain
\[
\beta'' = A' [V(\beta)] + A \circ \di V(\beta) [\beta'] = A' \circ A^{-1} [\beta'] + A \circ \di V(\beta) [\beta'],
\]
where in the second equality we have used (\ref{VV7}) again and the fact that $V$ takes values in the vector bundle $E$. The above identity, together with (\ref{VV2}),  (\ref{VV8}) and (\ref{VV9}) yields
\begin{equation}
\label{VV10}
|\beta''| \leq \omega_2(\rho) |\beta'|,
\end{equation}
for a suitable modulus of continuity $\omega_2$. Since $T\leq 1/\epsilon$, the Poincar\'e inequality applied to the map $\beta': \R/T \Z \rightarrow \R^{d-1}$, which has vanishing integral, gives us
\begin{equation}
\label{VV11}
\|\beta'\|_{L^2(\R/T\Z)} \leq \frac{T}{2\pi} \|\beta''\|_{L^2(\R/T\Z)} \leq \frac{1}{2\pi \epsilon}  \|\beta''\|_{L^2(\R/T\Z)}.
\end{equation}
If we choose $\rho$ so small that $\omega_2(\rho)$ is less than $2\pi \epsilon$, (\ref{VV10}) and (\ref{VV11}) force $\beta'$ to be identically zero. Therefore, $\beta(t)=b$ for every $t\in \R/T\Z$, for some $b\in B_0\subset B$. Equation (\ref{VV7}) implies that $V$ vanishes on $\pi^{-1}(b)$, and we conclude that
\[
\gamma(\R/T\Z) = \pi^{-1}(b),
\]
where $b$ is a zero of $\widehat{V}$, as we wished to prove.
\end{proof}

\subsection{The proof}

The proof of Theorem \ref{bottkol} we exhibit here is different from Bottkol's one: We obtain the triplet $(U,V,h)$ with low regularity properties by a rather straightforward application of the inverse mapping theorem, building on an idea we learned from Kerman, see \cite[Proposition 3.4]{ker99}, and then we prove its smoothness, together with the bounds (\ref{boundsappe}), by a standard argument that appears, for instance, in \cite{mos76}.

Without loss of generality, we assume that the period of the flow $\phi_{X_0}$ is 1 and we denote by $\T:= \R/\Z$ the 1-torus. We introduce the following space of continuous vector fields that are continuously differentiable along $X_0$ and have vanishing average on the orbits of $X_0$:
\[
\mathscr{U} := \{ U \in \mathfrak{X}^0(M) \mid \mathcal{L}_{X_0} U \mbox{ exists and is continuous on $M$, } \overline{U}=0\}.
\]
The norm
\[
\|U\|_{\mathscr{U}} := \|U\|_{C^0} + \|\mathcal{L}_{X_0} U\|_{C^0}
\]
turns $\mathscr{U}$ into a Banach space. We denote by $\mathscr{U}_{\mathrm{inj}}$ the open subset of $\mathscr{U}$ consisting of those vector fields $U\in \mathscr{U}$ such that $\|U\|_{C^0} < r_{\mathrm{inj}}$.

We consider also the following space of continuous vector fields that are orthogonal to $X_0$ and $\phi_{X_0}$-invariant:
\[
\mathscr{V} := \{ V \in \mathfrak{X}^0(M) \mid g(V,X_0) = 0 \mbox{ and } V(\phi^t_{X_0}(x)) = \di\phi^t_{X_0}(x)[V(x)] \; \forall x\in M, \; t\in \T\}.
\]
This is a closed linear subspace of $\mathfrak{X}^0(M)$, and hence a Banach space with the $C^0$-norm. Finally, we consider the following space of continuous $\phi_{X_0}$-invariant real functions on $M$:
\[
\mathscr{H} := \{ h\in C^0(M) \mid h(\phi_{X_0}^t(x)) = h(x) \; \forall x\in M, \; t\in \T\}.
\] 
The space $\mathscr{H}$ is also Banach with the $C^0$-norm.

When evaluated at $x\in M$, the identity (i) in the statement of Theorem \ref{bottkol} is an equality of vectors in $T_{u(x)} M$. When $U\in \mathscr{U}_{\mathrm{inj}}$ we can rearrange this equality  as an identity for vector fields on $M$ by applying the inverse of the isomorphism $P(U)_x: T_x M \rightarrow T_{u(x)} M$ to both sides. We obtain the identity
\[
V=  P(U)^{-1}  \di u[X_0] - h P(U)^{-1} X(u).
\]
This shows that the triplet $(U,V,h)$ we are looking for is a zero of the following map
\[
\Phi_X : \mathscr{U}_{\mathrm{inj}} \times \mathscr{V} \times \mathscr{H} \rightarrow \mathfrak{X}^0(M), \qquad \Phi_X(U,V,h) = P(U)^{-1} \di u[X_0] - h P(U)^{-1} X(u) - V,
\]
where we are setting as usual $u:= \exp\circ\,U$. Indeed, this map is well-defined because
\[
\di u[X_0] = \di^h \exp(U)[X_0] + \di^v \exp(U)[\nabla_{X_0} U]
\]
and $\nabla_{X_0} U = \mathcal{L}_{X_0} U +\nabla_U X_0$ is a continuous vector field. Here, $\di^h\exp(U)$ and $\di^v\exp(U)$ denote the horizontal and vertical derivatives of the map $\exp:TM\to M$ at the point $U$.
The usual facts about composition operators imply that $\Phi_X$ is continuously differentiable. Moreover, the map
\[
(X,U,V,h) \mapsto \Phi_X(U,V,h),  \qquad \mbox{respectively} \qquad  (X,U,V,h) \mapsto \di\Phi_X(U,V,h)
\]
is continuous from 
\[
\mathfrak{X}(M) \times \mathscr{U}_{\mathrm{inj}} \times \mathscr{V} \times \mathscr{H},
\]
where the space $\mathfrak{X}(M)$ is given the $C^1$-topology, into $\mathfrak{X}^0(M)$, respectively into the space of bounded operators from $\mathscr{U} \times \mathscr{V} \times \mathscr{H}$ to $\mathfrak{X}^0(M)$ endowed with the operator norm.

The map $\Phi_{X_0}$ sends $(0,0,1)$ to $0$. Moreover, after some computations one gets the formula
\[
\di\Phi_{X_0}(0,0,1)[(U,V,h)] = \nabla_{X_0} U - \nabla_U X_0 - h X_0 - V =\mathcal{L}_{X_0} U - h X_0 - V
\]
for the differential of $\Phi_{X_0}$ at $(0,0,1)$. In the next lemma we show that this operator is an isomorphism.

\begin{lem}
	\label{linear}
	The linear operator
	\[
	\mathscr{U} \times \mathscr{V} \times \mathscr{H} \rightarrow \mathfrak{X}^0(M), \qquad (U,V,h) \mapsto \mathcal{L}_{X_0} U - h X_0 - V,
	\]
	is an isomorphism.
\end{lem}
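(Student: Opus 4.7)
The plan is to decompose every vector field into its component parallel to $X_0$ and its $g$-orthogonal component, where $g$ is the $\phi_{X_0}$-invariant metric fixed at the start of the appendix. Writing
\[
U = \beta\,X_0 + U_\perp,\qquad W = \alpha\,X_0 + W_\perp,
\]
with $\beta,\alpha\in C^0(M)$, $g(U_\perp,X_0)=g(W_\perp,X_0)=0$, the key preliminary observation is that $\mathcal{L}_{X_0}$ preserves this splitting. Indeed, $\mathcal{L}_{X_0}X_0=0$, so $\mathcal{L}_{X_0}U=(\mathcal{L}_{X_0}\beta)X_0+\mathcal{L}_{X_0}U_\perp$; and because the flow of $X_0$ is isometric, $\mathcal{L}_{X_0}g=0$, which together with $g(U_\perp,X_0)=0$ gives $g(\mathcal{L}_{X_0}U_\perp,X_0)=0$. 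Note also that $V\in\mathscr V$ is already orthogonal to $X_0$ and invariant, and that averaging commutes with $\mathcal{L}_{X_0}$ in the usual sense, namely $\overline{\mathcal{L}_{X_0}U}=0$ for every $U$ in the domain of $\mathcal{L}_{X_0}$, since the integrand in the definition of the average of $\mathcal{L}_{X_0}U$ is the $t$-derivative of $d\phi_{X_0}^{-t}[U(\phi_{X_0}^t(\cdot))]$.

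With this, the equation $\mathcal{L}_{X_0}U-hX_0-V=W$ splits into the two independent equations
\[
\mathcal{L}_{X_0}\beta-h=\alpha,\qquad \mathcal{L}_{X_0}U_\perp-V=W_\perp.
\]
For the parallel equation, averaging along the orbits of $X_0$ and using that $h\in\mathscr H$ forces $h=-\overline{\alpha}$, after which $\mathcal{L}_{X_0}\beta=\alpha-\overline{\alpha}$ has a unique solution $\beta\in C^0(M)$ with $\overline{\beta}=0$, obtained by integrating the right-hand side along the orbits and subtracting off its average. For the orthogonal equation, averaging analogously yields $V=-\overline{W_\perp}$, which lies in $\mathscr V$: it is $\phi_{X_0}$-invariant by construction, and it remains orthogonal to $X_0$ because $g$ and $X_0$ are $\phi_{X_0}$-invariant, so the averaging process preserves $g(\cdot,X_0)=0$. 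One then has to solve $\mathcal{L}_{X_0}U_\perp=W_\perp-\overline{W_\perp}$ under the side constraint $\overline{U_\perp}=0$; this is accomplished by setting, for instance,
\[
U_\perp(x):=-\frac{1}{T_0}\int_0^{T_0}\!\!\int_0^t d\phi_{X_0}^{-s}\bigl[(W_\perp-\overline{W_\perp})(\phi_{X_0}^s(x))\bigr]\,ds\,dt,
\]
the vanishing of the outer integrand at $t=T_0$ being exactly the zero-average identity $\overline{W_\perp-\overline{W_\perp}}=0$.

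Uniqueness is proved by running the same decomposition on the homogeneous equation: the parallel part gives $\overline{\alpha}=0\Rightarrow h=0$ and then $\mathcal{L}_{X_0}\beta=0$ with $\overline{\beta}=0$, forcing $\beta=0$; the orthogonal part gives $V=0$ and $\mathcal{L}_{X_0}U_\perp=0$ with $\overline{U_\perp}=0$, forcing $U_\perp=0$. Finally, the explicit formulas above directly yield the bounds
\[
\|h\|_{C^0}+\|V\|_{C^0}\lesssim\|W\|_{C^0},\qquad \|\beta\|_{C^0}+\|U_\perp\|_{C^0}\lesssim\|W\|_{C^0},\qquad \|\mathcal{L}_{X_0}U\|_{C^0}\lesssim\|W\|_{C^0},
\]
so the inverse map is bounded from $\mathfrak{X}^0(M)$ into $\mathscr U\times\mathscr V\times\mathscr H$. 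The only non-routine step is the verification that the orthogonal splitting is preserved by $\mathcal{L}_{X_0}$, which is the one place where the $\phi_{X_0}$-invariance of $g$ is essential; everything else is a straightforward integration along orbits together with the standard zero-average bookkeeping already used in the proof of Lemma \ref{splitting}.
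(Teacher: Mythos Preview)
Your proof is correct and follows essentially the same strategy as the paper's: determine $h$ and $V$ by averaging the right-hand side along the orbits of $X_0$, then recover $U$ by integrating along those orbits subject to the zero-average constraint. The only difference is organizational---you perform the $g$-orthogonal splitting $U=\beta X_0+U_\perp$, $W=\alpha X_0+W_\perp$ at the outset and solve the parallel and perpendicular equations separately, whereas the paper keeps everything together, derives the single periodicity condition $hX_0+V=-\overline{W}$ (your two equations $h=-\overline{\alpha}$ and $V=-\overline{W_\perp}$ combined), and then writes down the closed formula $U(x)=\int_0^{T_0}(t/T_0-\tfrac12)\,\di\phi_{X_0}^{-t}[W(\phi_{X_0}^t(x))]\,\di t$; a short computation shows your formula for $U_\perp$ agrees with the orthogonal part of this. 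Your explicit invocation of $\mathcal{L}_{X_0}g=0$ to see that the splitting is preserved is exactly what the paper uses implicitly when it asserts that the decomposition $-\overline{W}=hX_0+V$ lands $V$ in $\mathscr{V}$.
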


\begin{proof}
	We must prove that for every vector field $W\in \mathfrak{X}^0(M)$ there exists a unique triplet $(U,V,h)$ in $\mathscr{U}\times \mathscr{V} \times \mathscr{H}$ such that
	\begin{equation}
	\label{linear1}
	\mathcal{L}_{X_0} U = h X_0 + V + W.
	\end{equation}
	By the definition of the Lie derivative, the equation
	\[
	\mathcal{L}_{X_0} U = Y
	\]
	is equivalent to the integral formulation
	\[
	U( \phi_{X_0}^t(x) ) = \di\phi_{X_0}^t(x) \left[ U(x) + \int_0^t \di\phi_{X_0}^{-s} \bigl[ Y (\phi_{X_0}^s(x)) \bigr]\, \di s \right],
	\]
	for every $x\in M$ and every $t\in\T$. Using that the vector fields $X_0$ and $V$ and the function $h$ are $\phi_{X_0}$-invariant, equation (\ref{linear1}) can then be rewritten as
	\begin{equation}
	\label{linear2}
	U( \phi_{X_0}^t(x) ) = \di\phi_{X_0}^t(x) \left[ U(x) + t\, h(x) X_0(x) + t \,V(x) + \int_0^t \di\phi_{X_0}^{-s} \bigl[ W (\phi_{X_0}^s(x)) \bigr]\, \di s \right].
	\end{equation}
	Since the flow of $X_0$ gives us a free action of $\T$, the above formula defines a continuous vector field $U$ on $M$ if and only if the term in square brackets equals $U(x)$ for $t=1$, i.e.\ if and only if
	\begin{equation}
	\label{linear3}
	h(x) X_0(x) + V(x) = - \int_0^1 \di\phi_{X_0}^{-s} \bigl[ W (\phi_{X_0}^s(x)) \bigr]\, \di s = - \overline{W}(x).
	\end{equation}
	Given $W\in \mathfrak{X}_0(M)$, the above equation uniquely defines a real number $h(x)$ and a vector $V(x)$ in $T_x M$ that is orthogonal to $X_0(x)$. The fact that the averaged vector field $\overline{W}$ is continuous and $\phi_{X_0}$-invariant implies that the vector field $V$ and the function $h$ that are defined by (\ref{linear3}) are also continuous and $\phi_{X_0}$-invariant, and hence belong to $\mathscr{V}$ and $\mathscr{H}$, respectively.
	
	Thanks to (\ref{linear3}), equation (\ref{linear2}) becomes
	\begin{equation}\label{e:UU}
	U( \phi_{X_0}^t(x) ) = \di\phi_{X_0}^t(x) \left[ U(x) - t\, \overline{W}(x) + \int_0^t \di\phi_{X_0}^{-s} \bigl[ W (\phi_{X_0}^s(x)) \bigr]\, \di s \right].
	\end{equation}
	Therefore, the condition $\overline{U}=0$ reads 
	\[
	\begin{split}
	0 &= \overline{U}(x) = \int_0^1 \left( U(x) - t \,\overline{W}(x) + \int_0^t \di\phi^{-s}_{X_0}  \bigl[ W (\phi_{X_0}^s(x)) \bigr]\, \di s \right)\, \di t \\ &= U(x) - \frac{1}{2} \overline{W}(x) +  \int_0^1 \left( \int_0^t \di\phi_{X_0}^{-s} [W(\phi_{X_0}^s(x))] \, \di s \right) \, \di t \\ & = U(x) - \frac{1}{2} \overline{W}(x) + \overline{W}(x) - \int_0^1 t \, \di\phi_{X_0}^{-t} [W(\phi_{X_0}^t(x))] \, \di t \\ & = U(x) - \int_0^1 \left( t - \frac{1}{2} \right) \di\phi_{X_0}^{-t} [W(\phi_{X_0}^t(x))] \, \di t,
	\end{split}
	\]
where we have integrated by parts. The above equation determines $U$ uniquely:
	\begin{equation}
	\label{linear4}
	U(x) =\int_0^1 \left( t - \frac{1}{2} \right) \di\phi_{X_0}^{-t} [W(\phi_{X_0}^t(x))] \, \di t.
	\end{equation}
This formula defines a continuous vector field $U$ which has zero average and satisfies \eqref{e:UU}. Thus, $U$ and the pair $(V,h)\in \mathscr{V} \times \mathscr{H}$ that is defined by (\ref{linear3}), form the unique solution of (\ref{linear1}). This equation implies that $\mathcal{L}_{X_0} U$ is continuous, so the vector field $U$ belongs to $\mathscr{U}$.
\end{proof}

The regularity properties of $\Phi_X$ discussed above and the invertibility of $\di\Phi_{X_0}(0,0,1)$ allow us to apply the parametric inverse mapping theorem and to conclude that there is a positive number $\delta$ and an open neighborhood $\mathscr{N}$ of $(0,0,1)$ in $\mathscr{U}_{\mathrm{inj}} \times \mathscr{V} \times \mathscr{H}$ such that for every $X\in \mathfrak{X}(M)$ with $\|X-X_0\|_{C^1}<\delta$ the restriction of $\Phi_X$ to $\mathscr{N}$ is a $C^1$ diffeomorphism onto an open neighborhood of $0$ in $\mathfrak{X}^0(M)$. In particular, if $\|X-X_0\|_{C^1}<\delta$ then there exists a unique $(U,V,h)\in \mathscr{N}$ such that 
\[
\Phi_X(U,V,h)=0.
\]
Moreover, the inverse of $\Phi_X|_{\mathscr{N}}$ depends continuously on $X\in \mathfrak{X}(M)$ with respect to the $C^1$ topology and hence
\begin{equation}
\label{zerobd}
\max\{ \|U\|_{\mathscr{U}}, \|V\|_{C^0}, \|h-1\|_{C^0}\} \leq \omega_0 (\|X-X_0\|_{C^1})
\end{equation}
for a suitable modulus of continuity $\omega_0$.
Up to reducing the size of $\delta$ and $\mathscr{N}$, we may also assume that
\begin{equation}
\label{bdoninv}
\|\di\Phi_X(U,V,h)^{-1}\| \leq c \qquad \forall (U,V,h)\in \mathscr{N}, \; \forall X\in \mathfrak{X}(M) \mbox{ with } \|X-X_0\|_{C^1} < \delta,
\end{equation}
for a suitable positive number $c$.

There remains to prove that $U$, $V$ and $h$ are smooth, and that the bounds (\ref{boundsappe}) hold. Indeed, smooth zeros of $\Phi_X$ satisfy the conditions (i)-(v) of Theorem \ref{bottkol} and hence the following lemma concludes the proof of this theorem.

\begin{lem}\label{l:lemmabounds}
	The maps  $U$, $V$ and $h$ are smooth and for every integer $k\geq 1$ we have the bound
	\begin{equation}\label{lemmabounds}
\max\{ \|U\|_{C^k}, \|\mathcal{L}_{X_0} U\|_{C^k}, \|V\|_{C^k}, \|h-1\|_{C^k} \} \leq \omega_k(\|X-X_0\|_{C^k}),
\end{equation}
for some modulus of continuity $\omega_k$.
\end{lem}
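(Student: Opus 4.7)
The plan is to bootstrap the regularity of $(U,V,h)$ by re-applying the parametric inverse function theorem at successively higher levels of regularity, and then to identify the higher-regularity solutions with the one already produced via a uniqueness argument.

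For each integer $k\geq 0$, introduce the Banach space
\[
\mathscr{U}^k:=\{U\in \mathfrak{X}^k(M)\mid \mathcal{L}_{X_0}U\in \mathfrak{X}^k(M),\ \overline{U}=0\}
\]
with norm $\|U\|_{\mathscr{U}^k}:=\|U\|_{C^k}+\|\mathcal{L}_{X_0}U\|_{C^k}$, and analogously define $\mathscr{V}^k\subset \mathfrak{X}^k(M)$ and $\mathscr{H}^k\subset C^k(M)$ as the closed subspaces of $S^1$-invariant elements (normed by $\|\cdot\|_{C^k}$). The point of including $\|\mathcal{L}_{X_0}U\|_{C^k}$ in the $\mathscr{U}^k$-norm is that it makes the expression $\di u[X_0]=\di^h\exp(U)[X_0]+\di^v\exp(U)[\nabla_{X_0}U]$ fall in $\mathfrak{X}^k(M)$, so that $\Phi_X$ maps $\mathscr{U}^k_{\mathrm{inj}}\times \mathscr{V}^k\times \mathscr{H}^k$ into $\mathfrak{X}^k(M)$. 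A standard inspection of composition and pull-back operators then shows that both $(X,U,V,h)\mapsto \Phi_X(U,V,h)$ and its differential are continuous from $\mathfrak{X}^k(M)\times \mathscr{U}^k_{\mathrm{inj}}\times \mathscr{V}^k\times \mathscr{H}^k$ to the corresponding target spaces.

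Next I observe that the linearization $\di\Phi_{X_0}(0,0,1)$ stays an isomorphism at the $C^k$-level. The explicit formulas (\ref{linear3}) and (\ref{linear4}) appearing in the proof of Lemma~\ref{linear} express its inverse entirely in terms of averages along the flow of $X_0$, with kernels built from $\di\phi_{X_0}^{\pm t}$ and the metric $g$; these operations neither gain nor lose derivatives, so (\ref{linear3})--(\ref{linear4}) define a bounded linear inverse $\mathfrak{X}^k(M)\to \mathscr{U}^k\times \mathscr{V}^k\times \mathscr{H}^k$ for every $k\geq 0$. Consequently the parametric inverse function theorem applied at level $k\geq 1$ produces $\delta_k>0$ and a neighborhood $\mathscr{N}_k\subset \mathscr{U}^k_{\mathrm{inj}}\times \mathscr{V}^k\times \mathscr{H}^k$ of $(0,0,1)$ such that, for every $X$ with $\|X-X_0\|_{C^k}<\delta_k$, the equation $\Phi_X(U^{(k)},V^{(k)},h^{(k)})=0$ has a unique solution $(U^{(k)},V^{(k)},h^{(k)})\in \mathscr{N}_k$, together with an estimate
\[
\max\bigl\{\|U^{(k)}\|_{\mathscr{U}^k},\ \|V^{(k)}\|_{C^k},\ \|h^{(k)}-1\|_{C^k}\bigr\}\leq \omega_k\bigl(\|X-X_0\|_{C^k}\bigr)
\]
for a suitable modulus of continuity $\omega_k$.

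To conclude, after possibly shrinking $\delta_k$ we may arrange that $\mathscr{N}_k\subset \mathscr{N}$, where $\mathscr{N}$ is the $C^0$-neighborhood used in the main application of the inverse function theorem in the appendix. The uniqueness of $(U,V,h)$ inside $\mathscr{N}$ then forces $(U^{(k)},V^{(k)},h^{(k)})=(U,V,h)$, so $(U,V,h)\in \mathscr{U}^k\times \mathscr{V}^k\times \mathscr{H}^k$ for every $k\geq 1$; in particular the triplet is smooth, and the bound (\ref{lemmabounds}) follows from the inverse function theorem estimate above. The main technical obstacle is the verification that $\Phi_X$ is $C^1$ between the higher-regularity spaces $\mathscr{U}^k\times \mathscr{V}^k\times \mathscr{H}^k$ and $\mathfrak{X}^k(M)$ uniformly in $k$: one has to track how the nonlinear terms $\exp\circ U$, $P(U)^{-1}$ and $X\circ u$ interact with the auxiliary component $\|\mathcal{L}_{X_0}U\|_{C^k}$ of the $\mathscr{U}^k$-norm, using the $S^1$-invariance of $g$ to commute Lie derivatives along $X_0$ with the geometric constructions, but this is a routine if tedious application of the chain rule.
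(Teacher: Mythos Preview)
Your approach is genuinely different from the paper's, and it contains a real gap concerning the smallness hypothesis on $X$.

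The issue is with the claim that the parametric inverse function theorem at level $k$ applies under the condition $\|X-X_0\|_{C^k}<\delta_k$. For $\Phi_X:\mathscr{U}^k\times\mathscr{V}^k\times\mathscr{H}^k\to\mathfrak{X}^k(M)$ to be $C^1$, the partial derivative in $U$ of the term $X\circ u$ must land in $\mathfrak{X}^k(M)$; but this derivative involves $\di X\circ u$, which is only $C^{k-1}$ when $X\in C^k$. Thus you actually need $X\in C^{k+1}$, and the continuity of $(X,U,V,h)\mapsto \di\Phi_X(U,V,h)$ as an operator into $\mathfrak{X}^k(M)$ holds only in the $C^{k+1}$-topology on $X$. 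Consequently the parametric inverse function theorem at level $k$ yields a solution and an estimate only when $\|X-X_0\|_{C^{k+1}}<\delta_k'$, not when $\|X-X_0\|_{C^k}<\delta_k$. More importantly, neither condition follows from the standing hypothesis $\|X-X_0\|_{C^1}<\delta$ of Theorem~\ref{bottkol}. So your argument proves smoothness of $(U,V,h)$ and the bound \eqref{lemmabounds} only for those $X$ that are additionally close to $X_0$ in every $C^k$, whereas the lemma asserts these conclusions for \emph{all} smooth $X$ satisfying the single $C^1$-smallness condition. This distinction matters in the applications: the normal form Theorem~\ref{main2} and the systolic inequality Theorem~\ref{main1} crucially use that the $C^k$-bounds hold on a $C^2$-neighborhood of $\alpha_0$, with the $C^3$-norm merely bounded rather than small.

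The paper avoids this loss by a different mechanism. It works locally on a torus, writes $\Phi_X(U,V,h)=\varphi_X(\cdot,W)$ as a pointwise multiplication operator, and uses difference quotients (translations $\tau_y$) together with the \emph{fixed} $C^0$-level inverse $\di\Phi_X(U,V,h)^{-1}$, whose operator norm is uniformly bounded by \eqref{bdoninv} under the sole hypothesis $\|X-X_0\|_{C^1}<\delta$. This yields the identity
\[
(\partial_{x_i}U,\partial_{x_i}V,\partial_{x_i}h)=-\di\Phi_X(U,V,h)^{-1}\,\partial_{x_i}\varphi_X(\cdot,W),
\]
from which one reads off $C^1$-regularity and the $C^1$-estimate directly, and then bootstraps. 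The point is that one never has to invert an operator between $C^k$-spaces, only the original $C^0$-level one, so no additional smallness on higher norms of $X$ is ever imposed.
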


\begin{proof}
	Since the matter is local, it is enough to consider the special case in which $M$ is a torus $\T^d$ and $X_0$ is the constant vector field $\partial_{x_1}$. In this case, $\mathcal{L}_{X_0} U$ is just $\partial_{x_1} U$. In order to simplify the notation we set
	\[
	W:= (U,\partial_{x_1} U,V,h) : \T^d \rightarrow \R^{3d+1}
	\]
	so that the map $\Phi_X$ becomes a multiplication operator of the form
	\[
	\Phi_X(U,V,h)  = \varphi_X(\,\cdot\,,W),
	\]
	for a suitable smooth map
	\[
	\varphi_X : \T^d \times \R^{3d+1}  \rightarrow \R^d.
	\]
	Note that the differential of order $k$ of $\varphi_X$ depends on the derivatives up to order $k$ of the smooth vector field $X$. Differentiation yields the identity
	\begin{equation}
	\label{rido}
	\di\Phi_X(U,V,h) [ (\widehat{U}, \widehat{V}, \widehat{h})] = \di_2 \varphi_X(\,\cdot\,,W)[\widehat{W}],
	\end{equation}
	where 
	\[
	\widehat{W}:= (\widehat{U},\partial_{x_1} \widehat{U},\widehat{V},\widehat{h}).
	\]
	We denote by $\tau_y$ the translation operator by the vector $y\in \R^d$:
	\[
	(\tau_y W)(x) := W(x+y) \qquad \forall x\in \T^d.
	\]
	The fact that $(U,V,h)$ is a zero of $\Phi_X$ implies that
	\[
	\varphi_X(\,\cdot\,,W)=0.
	\]
	A first order expansion for $y\rightarrow 0$ then gives us
	\[
	\begin{split}
	0 &= \varphi_X(\,\cdot+y,\tau_y W) - \varphi_X(\,\cdot\,,W)\\ & = \varphi_X(\,\cdot+y,\tau_y W) - \varphi_X(\,\cdot\,,\tau_y W) + \varphi_X(\,\cdot\,,\tau_y W)- \varphi_X(\,\cdot\,,W) \\ &= \di_1 \varphi_X(\,\cdot\,,\tau_y W)[y] + o(|y|) + \di_2 \varphi_X(\,\cdot\,,W)[\tau_y W - W] + o(|\tau_y W - W|)\\ &=\di_1 \varphi_X(\,\cdot\,,W)[y] + o(|y|) + \di_2 \varphi_X(\,\cdot\,,W)[\tau_y W - W] + o(|\tau_y W - W|),
	\end{split}
	\]
	where we have used that $\di_1 \varphi_X(\,\cdot\,,\tau_y W)[y]-\di_1 \varphi_X(\,\cdot\,,W)[y]=o(|y|)$. Using identity (\ref{rido}) this can be reformulated in the following way
	\[
	\begin{split}
	\di\Phi_X(U,V,h) &[ (\tau_y U- U,\tau_y V-V,\tau_yh-h)] = - \di_1 \varphi_X(\,\cdot\,,W)[y] + o(|y|) \\ &+ o(\|\tau_y U - U\|_{\mathscr{U}} + \|\tau_y V-V\|_{C^0} + \|\tau_y h-h\|_{C^0}).
	\end{split}
	\]
	By applying the inverse of the operator $\di\Phi_X(U,V,h)$ we find
	\[
	\begin{split}
	(\tau_y U- U,\tau_y V-V,\tau_yh-h) = &- \di\Phi_X(U,V,h)^{-1} \di_1 \varphi_X(\,\cdot\,,W)[y] + o(|y|) \\ &+ o(\|\tau_y U - U\|_{\mathscr{U}} + \|\tau_y V-V\|_{C^0} + \|\tau_y h-h\|_{C^0}),
	\end{split}
	\]
which shows that the maps $U$, $V$ and $h$ are of class $C^1$ with
\begin{equation}
\label{ildiff}
(\partial_{x_i} U,\partial_{x_i} V,\partial_{x_i} h) = - \di\Phi_X(U,V,h)^{-1} \partial_{x_i}\varphi_X(\cdot,W) \qquad\forall\,i=1,\ldots,d,
\end{equation}
where $\partial_{x_i}\varphi_X:=\di_1\varphi_X[\partial_{x_i}]$. This also shows that $\partial_{x_i} U$ belongs to $\mathscr{U}$, meaning that $\partial_{x_1} \partial_{x_i}U$ exists and is continuous.

If we set $w_0:= (0,1)\in \R^{3d} \times \R$, the fact that $\Phi_X(0,0,1)=0$ reads
	\[
	\varphi_{X_0}(x,w_0) = 0 \qquad \forall x\in \T^d,
	\]
	and hence
	\begin{align*}
	|\partial_{x_i} \varphi_X(x,W(x))| &\leq | \partial_{x_i} \varphi_X(x,W(x)) - \partial_{x_i} \varphi_{X_0}(x,W(x))|\\
	&\quad + |\partial_{x_i} \varphi_{X_0}(x,W(x)) - \partial_{x_i} \varphi_{X_0}(x,w_0)|\\
	&\leq \| \partial_{x_i} \varphi_X-\partial_{x_i}\varphi_{X_0}\|_{C^0}+\omega\big(\| W-w_0\|_{C^0}\big)
	\end{align*}
	for some modulus of continuity $\omega$. Since $X\mapsto \di_1\varphi_X$ and $X\mapsto W$ are continuous in the $C^1$-norm of $X$, this inequality implies a bound of the form
	\[
	\|\partial_{x_i} \varphi_X(\,\cdot\,,W)\|_{C^0} \leq \omega(\|X-X_0\|_{C^1})\qquad\forall\,i=1,\ldots,d,
	\]
	for a suitable modulus of continuity $\omega$. This bound, together with (\ref{ildiff}) and (\ref{bdoninv}), gives us a modulus of continuity $\omega$ such that for all $i=1,\ldots,d$
	\[
	\max\{ \|\partial_{x_i}U\|_{\mathscr{U}}, \|\partial_{x_i}V\|_{C^0}, \|\partial_{x_i}h\|_{C^0} \} \leq \omega(\|X-X_0\|_{C^1}).
	\]
	The above inequality and (\ref{zerobd}) imply the case $k=1$ in (\ref{lemmabounds}).  By bootstrapping the above argument we obtain that $U$, $V$ and $h$ are smooth and satisfy (\ref{lemmabounds}) for all $k\geq1$. 
\end{proof}


\providecommand{\bysame}{\leavevmode\hbox to3em{\hrulefill}\thinspace}
\providecommand{\MR}{\relax\ifhmode\unskip\space\fi MR }
\providecommand{\MRhref}[2]{%
  \href{http://www.ams.org/mathscinet-getitem?mr=#1}{#2}
}
\providecommand{\href}[2]{#2}

\end{document}